\documentclass[10pt]{amsart}
\ProvidesClass{amsart}[2009/07/02 v2.20.1]
\usepackage{amsfonts,amsmath,amssymb,color,amscd,amsthm}
\usepackage[T1]{fontenc} 

\usepackage[francais,english]{babel} 
\usepackage[all]{xy} 
\usepackage{tikz-cd}
\usepackage{pgf,tikz} 
\usepackage{pgfplots}
\usepgfplotslibrary{patchplots}
 
\pgfplotsset{
        compat=1.11,
        My Line Style/.style={
            smooth,
            thick,
            samples=400,
        },
    }

\usepackage{stmaryrd}
\usepackage[normalem]{ulem} 
\newcommand\redsout{\bgroup\markoverwith{\textcolor{red}{\rule[1.5ex]{2pt}{0.4pt}}}\ULon}

\usepackage[shortlabels]{enumitem}
\setlist[1]{wide}
\setlist[2]{leftmargin=15mm}
\setlist[enumerate]{label=\rm{(\arabic*)}}
\setlist[enumerate,2]{label=\rm({\it\roman*}), }
\setlist[itemize]{label=\raisebox{0.25ex}{\tiny$\bullet$}}

\usepackage[backref, colorlinks, linktocpage, citecolor = blue, linkcolor = blue]{hyperref}

\newcommand\NN{{\mathbb N}}

\newcommand\RR{{\mathbb R}}
\renewcommand\AA{{\mathbb A}}
\newcommand\ZZ{{\mathbb Z}}
\newcommand\CC{{\mathbb C}}

\renewcommand\r{{\mathbb R}}
\newcommand\Mat{\mathrm{Mat}}

\newcommand\tr{\hbox to 1mm  {${}^t \!  $} }

\renewcommand\k{\mathbf{ \textbf k}}

\DeclareMathOperator{\Aut}{Aut}

\DeclareMathOperator{\Aff}{Aff}
\DeclareMathOperator{\TEnd}{TEnd}
\DeclareMathOperator{\TAut}{TAut}
\DeclareMathOperator{\EEnd}{EEnd}
\DeclareMathOperator{\EAut}{EAut}

\DeclareMathOperator{\GL}{GL}

\DeclareMathOperator{\End}{End}

\DeclareMathOperator{\id}{id}

\DeclareMathOperator{\Spec}{Spec}

\DeclareMathOperator{\Sym}{Sym}

\newtheorem{lemma}{Lemma}[subsection]
\newtheorem{theorem}[lemma]{Theorem}

\newtheorem{corollary}[lemma]{Corollary}
\newtheorem{proposition}[lemma]{Proposition}
\newtheorem{question}[lemma]{Question}

\newtheorem*{corollary*}{Corollary}
\newtheorem*{corollary**}{Corollary}

\newtheorem{maintheorem}{Theorem}
\newtheorem{mainproposition}{Proposition}

\theoremstyle{definition}
\newtheorem{definition}[lemma]{Definition}
\newtheorem{notation}[lemma]{Notation}

\theoremstyle{remark}
\newtheorem{remark}[lemma]{Remark}
\newtheorem{example}[lemma]{Example}

\newcommand{\Z}{\mathbb{Z}}

\newcommand{\A}{\mathbb{A}}

\makeatletter
\ifcsname phantomsection\endcsname
    \newcommand*{\qrr@gobblenexttocentry}[5]{}
\else
    \newcommand*{\qrr@gobblenexttocentry}[4]{}
\fi
\newcommand*{\addsubsection}{
    \addtocontents{toc}{\protect\qrr@gobblenexttocentry}
    \subsection}
\makeatother

\newcommand{\set}[2]{\left\{\,#1 \ | \ #2\,\right\}}
\newcommand{\Bigset}[2]{\left\{\,#1 \ \Big| \ #2\,\right\}}

\subjclass[2010]{14R10, 37F10}

\author{J\'er\'emy Blanc}
\address{J\'er\'emy Blanc, Universit\"{a}t Basel, Departement Mathematik und Informatik, Spiegelgasse $1$, CH-$4051$ Basel, Switzerland}
\email{jeremy.blanc@unibas.ch}

\author{Immanuel van Santen}
\address{Immanuel van Santen, Universit\"{a}t Basel, Departement Mathematik und Informatik, Spiegelgasse $1$, CH-$4051$ Basel, Switzerland}
\email{immanuel.van.santen@math.ch}

\title[Affine-triangular automorphisms]{Dynamical degrees 
	of affine-triangular automorphisms of affine spaces}
\setcounter{tocdepth}{1}
\begin{document}
\maketitle
\begin{abstract}
	We study the possible dynamical degrees of automorphisms of the affine space $\mathbb{A}^n$. In dimension $n=3$, we determine all dynamical degrees arising from the composition of an affine automorphism with a triangular one. This generalises the easier case of shift-like automorphisms which can be studied in any dimension. We also prove that each weak Perron number is the dynamical degree of an affine-triangular
	automorphism of the affine space $\mathbb{A}^n$ for some $n$, and we
	give the best possible $n$ for quadratic integers, 
	which is either $3$ or $4$.
\end{abstract}

\tableofcontents

\section{Introduction}
\subsection{Dynamical degrees of polynomial endomorphisms}
	In this text, we work over an arbitrary field $\k$. For each $n\ge 1$, recall that an endomorphism $f\in \End(\A^n)$ of $\A^n=\A^n_\k$ is given by
		\[f\colon (x_1,\ldots,x_n)\mapsto (f_1(x_1,\ldots,x_n),\ldots ,f_n(x_1,\ldots,x_n))\]
		where $f_1,\ldots,f_n\in \k[x_1,\ldots,x_n]$. To simplify the notation, we often write $f=(f_1,\ldots,f_n)$ and thus identify $\End(\A^n)$ with $(\k[x_1,\ldots,x_n])^n$.

		 The degree of an  endomorphism $f=(f_1,\ldots,f_n)$, denoted by $\deg(f)$, is defined to be $\deg(f)=\max(\deg(f_1),\ldots,\deg(f_n))$. The set $\End(\A^n)$ of endomorphisms of $\A^n$ is a monoid, for the composition law, and the subset of invertible elements is the group $\Aut(\A^n)$ of automorphisms of $\A^n$.
		
		The dynamics of endomorphisms of $\A^n$, specially in the case of the ground field $\k=\mathbb{C}$, was studied intensively in the last decades, see for instance \cite{FornaessWu,Sibony1999,Mae2000,BonifantFornaess,Maequadratic,Maegoldenratio,Guedj02,GuedjSibony,Guedj04,Ueda,FavreJonsson,JonssonWulcan,XieMordellLang,DeLe2018Dynamics-of-a-fami}.
		For each dominant endomorphism $f\in \End(\A^n)$, the \emph{$($first$)$ dynamical degree} is defined as the real number
		\[
			\lambda(f)=\lim\limits_{r \to \infty} \deg(f^r)^{\frac{1}{r}}\in \r_{\ge 1}
		\] 
		(the limit exists by Fekete's subadditivity 
	Lemma, see Lemma~\ref{lem.dynamical_degree_exists}). If $f\in \End(\A^1)$ or $f\in \Aut(\A^2)$, then $\lambda(f)$ is an integer, but in higher dimensions, it can be quite complicated to understand the possible dynamical degrees. In \cite[Corollary 3]{DaFa2020Spectral-interpret}, the authors conjecture that $\lambda(f)$ is an algebraic integer of degree $\le n$, and of degree $\le n-1$ if $f\in \Aut(\A^n)$, a conjecture proven until now only for $n\le 2$.
	
	In this article, we study some particular family of automorphisms of $\A^n$, that we call \emph{affine-triangular}. These are compositions consisting of one
	affine automorphism and one
	triangular automorphism
	(see Definition~\ref{definition:triangularaffine}) below.
	Our two main results are Theorem~\ref{Thm:AffTriang3} and Theorem~\ref{Theorem:PerronAuto} below:
	
\begin{maintheorem}\label{Thm:AffTriang3}
For each field $\k$ and each integer $d\ge 2$, the set of dynamical degrees of all affine-triangular automorphisms of $\A^3$ of degree $\le d$ is equal to
\[
	\left.\left\{\frac{a+\sqrt{a^2+4bc}}{2}\right| (a,b,c)\in \NN^3, a+b\le d, c\le d\right\}\setminus \{0\}.
\]
Moreover, for $a,b,c\in \NN$ such that $\lambda=\frac{a+\sqrt{a^2+4bc}}{2}\not=0$, 
the dynamical degre $\lambda$ is achieved by either of the automorphisms
\[
	(x_3 + x_1^ax_2^b,x_2+x_1^c,x_1) \text{ and }(x_3+x_1^ax_2^{bc},x_1,x_2) \, .
\] 
\end{maintheorem}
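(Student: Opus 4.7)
The plan is to split the proof into two directions. First (\emph{realization}), verify that the two explicit automorphisms attain the claimed dynamical degree $\lambda = (a + \sqrt{a^2+4bc})/2$. Second (\emph{completeness}), show that every affine-triangular automorphism of $\A^3$ of degree $\le d$ has dynamical degree of the prescribed form.

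For realization, consider $f = (x_3 + x_1^a x_2^b,\, x_2 + x_1^c,\, x_1)$. Write $f^r = (f_1^{(r)}, f_2^{(r)}, f_3^{(r)})$ and set $D_i^{(r)} := \deg f_i^{(r)}$. The identity $f^{r+1} = f \circ f^r$ gives
\begin{gather*}
D_1^{(r+1)} \le \max\bigl(D_3^{(r)},\, aD_1^{(r)} + bD_2^{(r)}\bigr), \quad D_2^{(r+1)} \le \max\bigl(D_2^{(r)},\, cD_1^{(r)}\bigr), \\
D_3^{(r+1)} = D_1^{(r)},
\end{gather*}
with equality provided the leading terms of the two summands do not cancel. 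For large $r$ the arithmetic expressions strictly dominate, yielding the linear recurrence $D_1^{(r+1)} = a D_1^{(r)} + bc\, D_1^{(r-1)}$ whose characteristic polynomial $t^2 - at - bc$ has largest root $\lambda$. Hence $\deg(f^r)$ grows like $\lambda^r$ and $\lambda(f) = \lambda$. Non-cancellation is verified by induction on $r$, exhibiting an explicit leading monomial of $f_1^{(r)}$ whose support is disjoint from that of $f_3^{(r)} = f_1^{(r-1)}$. The second explicit automorphism $(x_3 + x_1^a x_2^{bc},\, x_1,\, x_2)$ is handled identically: from $f_2^{(r)} = f_1^{(r-1)}$ and $f_3^{(r)} = f_1^{(r-2)}$ the same recurrence results.

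For completeness, let $f = \alpha \circ \tau$ with $\alpha \in \Aff(\A^3)$ and $\tau$ triangular, and $\deg f \le d$. Since dynamical degree is invariant under conjugation, I would first conjugate $f$ by elements of the affine-triangular group to reduce $\alpha$ to a canonical form, essentially encoded by a permutation of the three coordinates together with the weights of the dominant monomials of $\tau$. Each normal form produces a max-plus recurrence on the degree vector $(D_1^{(r)}, D_2^{(r)}, D_3^{(r)})$ whose governing $3 \times 3$ non-negative integer matrix has Perron eigenvalue of the form $(a + \sqrt{a^2 + 4bc})/2$; the constraint $\deg f \le d$ translates into $a + b \le d$ and $c \le d$. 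The main obstacle lies here: one must classify affine-triangular automorphisms tightly enough that only the two recorded shapes emerge, and verify case-by-case that no cancellation of leading monomials reduces the realized dynamical degree below the tropical prediction. That verification is delicate, since cancellations can depend on the characteristic of $\k$ and on the scalar parameters of $\alpha$, so either an induction tracking the explicit leading form of each iterate or a genericity-after-conjugation argument is required; the classification step presumably rests on structural normal-form results established earlier in the paper.
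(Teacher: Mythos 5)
Your realization half is workable in outline (and differs in mechanism from the paper, which verifies the two model maps by computing the matrices contained in them and invoking $\mu$-algebraic stability rather than tracking a degree recurrence), but the completeness half contains the real gap, and it is not a matter of routine verification. You propose to ``verify case-by-case that no cancellation of leading monomials reduces the realized dynamical degree below the tropical prediction''; that statement is simply false for permutation-triangular automorphisms of $\A^3$. For instance $f=(x_3-x_2^n,x_1,x_2+x_1^n)$ contains a matrix with spectral radius $\sqrt{n}>1$, yet $f^3=\id$, so $\lambda(f)=1$ is strictly below the Perron eigenvalue of the associated degree matrix. Hence no induction on leading monomials and no genericity argument can establish what you want; instead one must (a) classify exactly how the leading part $g$ of $f$ can have an iterate with a vanishing component (the paper's Lemma~\ref{Lem:PossibilitiesForgrzero}), (b) show that in the resulting two shapes one can conjugate by an explicit triangular map that does not increase $\deg(f)$ and strictly decreases the degree of the offending coordinate polynomial $p_2$ (Lemma~\ref{Lem:PermTriangularNotAlgStab}), and (c) iterate this until either the maximal eigenvalue is $1$ or the map is $\mu$-algebraically stable (Proposition~\ref{Prop:AffineTriangularA3}). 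None of this ``conjugate-and-reduce'' mechanism, which is the heart of the theorem, appears in your proposal; you explicitly defer it to ``structural normal-form results established earlier in the paper,'' which in a blind proof is precisely the missing content.

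Two further points. First, even granting stability, your assertion that the governing $3\times 3$ matrix has Perron eigenvalue of the form $(a+\sqrt{a^2+4bc})/2$ with $a+b\le d$, $c\le d$ requires an explicit enumeration of the matrices contained in a permutation-triangular automorphism of degree $d$ (the paper's Lemma~\ref{Lemm:SetOfEigenvaluesAffT3}); you state the conclusion but give no argument, and the bound $a+b\le d$ (rather than, say, $a+b+c\le d$ or $a,b,c\le d$) comes exactly from that case analysis. Second, in the realization step your non-cancellation induction is only asserted; it is fillable (the top-degree form of a product is the product of top-degree forms over any field, and degenerate cases $a=0$, $b=0$, or $c=0$ need separate, easy treatment), but as written it is a claim, not a proof. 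The paper avoids this entirely by checking that the two model maps do not fall into either exceptional shape of Lemma~\ref{Lem:PermTriangularNotAlgStab}, so they are $\mu$-algebraically stable and Proposition~\ref{Prop:MonomialEig} gives $\lambda(f)=\theta$ at once.
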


Using Theorem~\ref{Thm:AffTriang3}, we prove in \cite[Theorem 2]{BlSa2019Automorphisms-of-t} that the set of dynamical degrees of all automorphisms of degree $3$ of $\A^3$ is equal to   
\[
	\left\{1 , \sqrt{2} , \ \frac{1+\sqrt{5}}{2} , \ \sqrt{3} , 2 , \ \frac{1 + \sqrt{13}}{2} , \
		1 + \sqrt{2} , \ \sqrt{6} , \ \frac{1 + \sqrt{17}}{2} , \ \frac{3+\sqrt{5}}{2} , \ 1+ \sqrt{3} , \ 3 \right\} \, .
\]
Note that $\frac{3+\sqrt{5}}{2}$ is the only number that does not belong to the list in Theorem~\ref{Thm:AffTriang3} and thus it is the dynamical degree of an automorphism of degree $3$
of $\AA^3$ that is not conjugate to an affine-triangular automorphism of any degree.		
	
For the next theorem, we recall the definition of $($weak$)$-Perron numbers (see Theorem~\ref{LindTheorem}  for some equivalent characterisations).
	\begin{definition}	
	\label{def.Perron_number}
	A \emph{Perron number} (respectively \emph{weak Perron number}) is a real number 
	$\lambda \geq 1$ that is an algebraic integer such that all other Galois conjugates $\mu\in \CC$ satisfy $\lvert \mu\rvert<\lambda$ (respectively $\lvert \mu\rvert\le \lambda$). 
	\end{definition}
	\begin{maintheorem}\label{Theorem:PerronAuto}
	Each weak-Perron number $\lambda$ is the dynamical degree of an affine-triangular automorphism of $\A^{n}$ for some integer $n$. Moreover:
	\begin{enumerate}[leftmargin=*]
	\item \label{Theorem:PerronAuto_1} If $\lambda>1$ is an integer, the least $n$ possible is $2$.
	\item \label{Theorem:PerronAuto_2} If $\lambda$ is a quadratic integer and its conjugate is negative, the least possible $n$ is $3$.
	\item \label{Theorem:PerronAuto_3}
	If $\lambda$ is a quadratic integer and its conjugate is positive, the least possible $n$ is $4$.
	\end{enumerate}
	\end{maintheorem}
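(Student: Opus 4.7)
The plan is to combine a general realisation argument with the three sharp constructions and obstructions. For the general realisation, the equivalent characterisation contained in Theorem~\ref{LindTheorem} lets me write any weak Perron number $\lambda \ge 1$ as the largest real root of some polynomial $X^{k} - a_1 X^{k-1} - \cdots - a_k$ with $a_i \in \NN$. On $\A^{k+1}$ I then consider the shift-like map
\[
f(x_1,\dots,x_{k+1}) = \bigl(x_{k+1} + x_1^{a_1}x_2^{a_2}\cdots x_k^{a_k},\; x_1,\;\dots,\;x_k\bigr),
\]
which is affine-triangular: add the monomial to $x_{k+1}$ (triangular), then cyclically shift the coordinates (affine). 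Writing $d_i^{(r)} = \deg((f^r)_i)$, the recursion $d_{i+1}^{(r+1)} = d_i^{(r)}$ for $i=1,\dots,k$ together with $d_1^{(r+1)} = \max(d_{k+1}^{(r)},\, a_1 d_1^{(r)} + \cdots + a_k d_k^{(r)})$ forces any asymptotic profile $d_i^{(r)} \sim c_i \lambda^r$ to make $(c_1,\dots,c_k)$ the Perron eigenvector of the companion matrix of the chosen polynomial, pinning down $\lambda(f) = \lambda$.

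For the sharp bounds: \textbf{(1)} An integer $\lambda \ge 2$ is realised by $f(x_1,x_2) = (x_1^\lambda + x_2,\ x_1)$ on $\A^2$, whose $r$th iterate has degree exactly $\lambda^r$; the bound $n \ge 2$ is immediate because $\Aut(\A^1)$ is the affine group, all of whose elements have dynamical degree $1$. \textbf{(2)} A quadratic weak Perron $\lambda$ with negative conjugate $\lambda'$ has minimal polynomial $X^2 - pX - q$ with $p \ge 0$ (from $|\lambda'| \le \lambda$) and $q > 0$ (from $\lambda\lambda' < 0$), giving $\lambda = \tfrac{p + \sqrt{p^2 + 4q}}{2}$, which occurs in Theorem~\ref{Thm:AffTriang3} with $(a,b,c) = (p,1,q)$; the lower bound $n \ge 3$ follows since $\Aut(\A^2)$ only yields integer dynamical degrees. \textbf{(3)} A quadratic weak Perron $\lambda$ with positive conjugate $\lambda'$ has minimal polynomial $X^2 - pX + q_0$ with $p, q_0 > 0$ and integer discriminant $p^2 - 4q_0 \ge 1$, so $\lambda - \lambda' \ge 1$ and there is an integer $a$ strictly between $\lambda'$ and $\lambda$; setting $bc = a(p-a) - q_0 > 0$ with $b, c \in \NN_{>0}$, the affine-triangular automorphism
\[
f(x_1,x_2,x_3,x_4) = \bigl(x_3 + x_1^{a}x_2^{b},\; x_4 + x_1^{c}x_2^{p-a},\; x_1,\; x_2\bigr)
\]
of $\A^4$ has degree recursion on $(d_1, d_2)$ governed by the positive matrix $M = \bigl(\begin{smallmatrix} a & b\\ c & p-a\end{smallmatrix}\bigr)$, whose trace and determinant are $p$ and $q_0$ and whose spectral radius is therefore $\lambda$; the Perron--Frobenius analysis above yields $\lambda(f) = \lambda$. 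The obstruction $n \ge 4$ comes from Theorem~\ref{Thm:AffTriang3}: every quadratic dynamical degree realised on $\A^3$ has the form $\tfrac{a + \sqrt{a^2 + 4bc}}{2}$ with $bc > 0$, whose Galois conjugate $\tfrac{a - \sqrt{a^2+4bc}}{2}$ is strictly negative.

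The main obstacle is the rigorous handling of the degree recursions, where two issues must be addressed uniformly: showing that the monomial term wins the $\max$ for all large $r$, and ruling out cancellations in the leading monomials of the iterates. For the shift-like (respectively block-shift-like) automorphisms used in cases (1)--(3) and in the general realisation, both issues reduce to an induction on $r$ that tracks the leading monomial explicitly; the fact that at each step the new coordinate is obtained by adding a pure monomial in the preceding ones makes this tracking cancellation-free, so the recursion above is an equality and the spectral radius computation really gives $\lambda(f)$.
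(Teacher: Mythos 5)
There is a genuine gap in your general realisation step. Theorem~\ref{LindTheorem} says a weak Perron number is the spectral radius of a (possibly irreducible) non-negative integer \emph{matrix}; it does \emph{not} say that $\lambda$ is the largest real root of a polynomial $X^{k}-a_1X^{k-1}-\cdots-a_k$ with $a_i\in\NN$. Numbers of that latter form are exactly the Handelman numbers, and by Lemma~\ref{Lemm:Handelman} such a number has no other positive real Galois conjugate; so a quadratic weak Perron number with positive conjugate, e.g.\ $(3+\sqrt{5})/2$, is \emph{not} of that form, and the same phenomenon occurs in every degree $\geq 2$. Consequently your shift-like map on $\A^{k+1}$ cannot realise a general weak Perron number: by Proposition~\ref{Prop:Jonsson} (equivalently Proposition~\ref{Prop:PermElementary}) shift-like automorphisms only ever produce Handelman numbers. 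The fix is to use the matrix characterisation and to generalise your own case-(3) construction: given an irreducible $A=(a_{i,j})\in\Mat_n(\NN)$ with $\rho(A)=\lambda>1$, take the automorphism of $\A^{2n}$ sending $(x_1,\ldots,x_{2n})$ to $(x_{n+1}+\prod_i x_i^{a_{1,i}},\ldots,x_{2n}+\prod_i x_i^{a_{n,i}},x_1,\ldots,x_n)$; this is the paper's Lemma~\ref{Lemm:Monomialauto}, proved there with a weighted degree $\mu$ built from a Perron eigenvector of $A$, and it is what the paper uses for the unconditional realisation statement.

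Your three sharp cases are essentially right and close to the paper's: (1) and (2) match it, the lower bounds via integrality of dynamical degrees on $\A^1,\A^2$ and via Theorem~\ref{Thm:AffTriang3} (negative conjugate of $(a+\sqrt{a^2+4bc})/2$ when $bc>0$) are the same obstructions the paper uses, and your $\A^4$ map in (3) is a valid variant of the paper's (the paper chooses the matrix with $\alpha=\lfloor a/2\rfloor$ and a $1$ in one corner, you choose any integer $a$ strictly between the two conjugates and factor $a(p-a)-q_0=bc$; both give an irreducible positive matrix with the right characteristic polynomial). A secondary caveat: your justification of $\lambda(f)=\rho(M)$ by tracking ordinary degrees and ``cancellation-free'' leading monomials is only a sketch; over an arbitrary field (the paper allows characteristic $2$) two leading monomials with coefficient $1$ can cancel when degrees tie, so one must either prove the monomial term strictly dominates at every step or, as the paper does, work with the $\mu$-weighted degree, whose leading part is a monomial endomorphism and for which algebraic stability (Propositions~\ref{Prop:Dynamicaldegreeestimate} and~\ref{Prop:MonomialEig}) gives the conclusion cleanly.
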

	Note that Statement \ref{Theorem:PerronAuto_1} in Theorem~\ref{Theorem:PerronAuto} is well-known, 
	as $\{\lambda(f)\mid f\in \Aut(\A^2)\}=\Z_{\ge 1}$. We 
	include it to emphasise the relation between the degree of the weak-Perron numbers and 
	the possible $n$.
	In view of the above theorems and of the techniques developped in this text, it is natural to ask the following 
	\begin{question}
	Is every dynamical degree of any element of $\End(\A^n)$ $($respectively $\Aut(\A^n))$ equal to a 
	weak Perron number of degree $\le n$ $($respectively of degree $\le n-1)$?
	\end{question}
	As already mentioned above, a positive answer to this question, where ``weak Perron number'' is replaced by ''algebraic integer'', was conjectured in the recent preprint
	\cite[Corollary 3]{DaFa2020Spectral-interpret} 
	(that appeared after we asked the above question in a first version of this text).
	In \cite{DaFa2020Spectral-interpret}, it is also proven that the dynamical degree of any element
	in $\Aut(\AA^3)$ is an algebraic number of degree at most six.
	More generally they prove that the dynamical degree of any element
	of $\End(\AA^n)$ is an algebraic number of degree at most $n$
	in case the square of the first dynamical degree is bigger than the second dynamical degree of $f$
	\cite[Theorem 2]{DaFa2020Spectral-interpret}.

Theorem~\ref{Thm:AffTriang3} shows in particular that the dynamical degree of every affine-triangular automorphism of $\A^3$ is equal to the dynamical degree of a shift-like automorphism. However, for each $d\ge 3$ the set of dynamical degrees of all affine-triangular automorphisms of $\A^3$ of degree $d$ strictly contains the set 
of dynamical degrees of all shift-like automorphisms of $\A^3$ of degree $d$. Indeed, the latter set of
dynamical degrees consists of the numbers $(a + \sqrt{a^2 + 4d-4a})/2$ where $0 \leq a \leq d$
and does not contain $(1 + \sqrt{1 + 4d})/2$ , which is the dynamical degree of the affine-triangular 
automorphism $(x_3 + x_1 x_2,x_2+x_1^d,x_1)$, see Corollary~\ref{Cor:ToThm1}.
\[
\begin{array}{|c|c|c|}
\hline
 & \text{dynamical degrees of shift-like} &\text{dynamical degrees of affine-triangular} \\
d& \text{automorphisms of }\A^3\text{ of degree }& \text{automorphisms of }\A^3\text{ of degree }d\\
&d \text{ not appearing in degree }< d& \text{ not appearing in degree }< d \\
\hline
1 & \{1\} & \{1\}\\
2 & \{\sqrt{2},\frac{1+\sqrt{5}}{2},2\} & \{\sqrt{2},\frac{1+\sqrt{5}}{2},2\}\\
3 & \{\sqrt{3},1+\sqrt{2},3\} & \{\sqrt{3},\frac{1+\sqrt{13}}{2},1+\sqrt{2},\sqrt{6},\frac{1+\sqrt{17}}{2},1+\sqrt{3},3\} \\ 
4 & \{\frac{1+\sqrt {13}}{2},1+\sqrt{3},\frac{3+\sqrt {13}}{2},4\}
& \begin{array}{c}\{2\sqrt {2},1+\sqrt{5},\frac{3+\sqrt {13}}{2},\frac{1+\sqrt {33}}{2},2
\sqrt {3},\frac{1+\sqrt {37}}{2},\\
\frac{3+\sqrt {17}}{2},1+\sqrt {7},\frac{3+\sqrt {21}}{2},4\}\end{array}\\
\hline
\end{array}
\]

Note that $2\sqrt{2}$ and $\sqrt{3}$ appear as dynamical degrees of affine-triangular automorphisms in degree 
$4$ and $3$, respectively (and not smaller), even if $2\sqrt{2}<3$ and $\sqrt{3} < 2$. 
Similarly, for each prime $p$, the number $\sqrt{p}$ is the dynamical degree of a 
shift-like automorphism of degree $p$, but it is not the dynamical degree of an affine-triangular automorphism of degree $< p$.

\bigskip
	
		\subsection{Dynamical degrees of affine-triangular automorphisms in higher dimensions}
In dimension $n\ge 4$, we are not able to compute all dynamical degrees of all affine-triangular automorphisms, but can get some large families. The case of shift-like automorphisms is covered by our method, and we retrieve a proof of the result of Mattias Jonsson (Proposition~\ref{Prop:Jonsson}), but we can also study wider classes. We give the dynamical degrees of all permutation-elementary automorphisms (a family that strictly includes the shift-like automorphisms) in $\S\ref{Sec:PermEl}$ (especially Proposition~\ref{Prop:PermutationElementary}) and also give the dynamical degrees of other affine-triangular automorphisms. In particular, we show that in any dimension $n\ge 4$, there are affine-triangular  automorphisms of $\A^n$ whose dynamical degrees are not those of a shift-like automorphisms or more generally of a permutation-elementary automorphisms, contrary to the case of dimension $n\le 3$. The reason is that
dynamical degrees of shift-like automorphisms are special kinds of weak Perron numbers. Indeed, they are positive real numbers that are roots of a monic integral polynomial where all 
	coefficients (except the first one) are non-positive. These numbers are called \emph{Handelman numbers} in \cite{Bassino} (see especially \cite[Lemma~10]{Bassino}) and they have 
	no other positive real Galois conjugates (Lemma~\ref{Lemm:Handelman}). 
	This implies that Handelman numbers are weak Perron numbers (see Corollary~\ref{Cor:Handelman_implies_weak_Perron}).
	Theorem~\ref{Thm:AffTriang3} implies that the dynamical degree of an affine-triangular automorphism of $\A^3$ is a Handelman number (and the same holds for all automorphisms of $\A^1$ and $\A^2$), but for any $n\ge 4$, there are affine-triangular automorphisms of $\A^n$ whose dynamical degrees are not Handelman numbers. This follows in particular from Theorem~\ref{Theorem:PerronAuto}, applied to any weak Perron quadratic integer with a positive conjugate, for instance to $(3+\sqrt{5})/2$. We can also apply Theorem~\ref{Theorem:PerronAuto} to 
	weak Perron numbers of arbitrary large degree.
	
	\subsection{Results in the literature on dynamical degrees of endomorphisms of $\AA^n$}
	\label{SubSec:Previous}
	
	Let us recall what is known on the dynamical degrees of elements of $\End(\A^n)$. 
	\begin{enumerate}
	\item
	The case where $n=1$ is obvious: in this case we have $\lambda(f)=\deg(f)$, so each dynamical degree is an integer, which is moreover equal to $1$ in the case of automorphisms.
	\item
	When $n=2$, the case of automorphisms follows from the Jung-van der Kulk Theorem \cite{Ju1942Uber-ganze-biratio,Ku1953On-polynomial-ring}: every dynamical degree is an integer, 
	as $\deg(f^r)=\deg(f)^r$ for each $r$, when $f$ is taken to be cyclically reduced 
	(this is explained in Corollary~\ref{Cor:Jung2} below, or 
	in~\cite[Proposition~3]{Furter99}).
	The set of all dynamical degrees of quadratic endomorphisms of $\A^2_\CC$ is equal to $\{1,\sqrt{2}, (1+\sqrt{5})/2,2\}$ by \cite[Theorem 2.1]{Guedj04}. 
	Moreover, the dynamical degree of every element of 
	$\End(\A^2_\mathbb{C})$ 
	is a quadratic integer, by \cite[Theorem A']{FavreJonsson07}. 
	\item
	The case of dimension $n\ge 3$ is open in general: there is for the moment no hope 
	of classifying all dynamical degrees, even when studying only automorphisms.
	
	The set of dynamical degrees of all automorphisms of $\A^3_\CC$ of degree $2$ is equal to $\{1,\sqrt{2},(1+\sqrt{5})/2,2\}$ by \cite[Theorem~3.1]{Maequadratic} 
	(and the same holds over any field \cite[Theorem 2]{BlSa2019Automorphisms-of-t}).
	\end{enumerate}
	
	\bigskip
	
	Apart from the above classification results, two natural families are also known: the \emph{monomial endomorphisms} and the \emph{shift-like automorphisms}.
	\begin{enumerate}[$(A)$]
	\item
	A \emph{monomial endomorphism} of $\A^n$ is an endomorphism of the form $f=(f_1,\ldots,f_n)$, where each $f_i$ is a monomial. When we write $f_i=\alpha_i x_1^{m_{i,1}}\cdots x_n^{m_{i,n}}$ with $\alpha_i\in \k^*$ and $m_{i,1},\ldots,m_{i,n}\in \NN$ and assume that $f$ is dominant, then
	the dynamical degree of $f$ is the spectral radius of the corresponding matrix $M=(m_{i,j})_{i,j=1}^n\in \Mat_n(\NN)$. This classical result is proven again in Corollary~\ref{Coro:MonomialEndo} below. The numbers arising this way are the weak Perron numbers 
	(see Theorem~\ref{LindTheorem}).
	\item
	 For each $n\ge 1$, a \emph{shift-like automorphism} of $\A^{n+1}$ is an automorphism of the form $(x_{n+1}+p(x_1,\ldots,x_{n}),x_1,\ldots,x_{n})$ for some polynomial $p\in \k[x_1,\ldots,x_n]$.  
	 These are  particular examples of affine-triangular automorphisms. The dynamics of such automorphisms have been studied in various texts (see for instance \cite{BedfordPambuccian,Mae2000,Maegoldenratio,Ueda,SayaniKaushal}). The dynamical degrees of shift-like automorphisms are known, by a result of Mattias Jonsson (see Proposition~\ref{Prop:Jonsson} below).
	For a proof of this result, together with a generalisation, see $\S\ref{Sec:PermEl}$.
	
	\end{enumerate}

\subsection{Description of the techniques associated to degrees}\label{Sec:Algo}
	In the rest of this introduction, we describe the main technique that we introduce in order 
	to compute dynamical degrees of endomorphisms of $\AA^n$. 
	This is related to degree functions (or monomial valuations), and may be applied to endomorphisms of $\A^n$, not only affine-triangular automorphisms. We also give an outline of the 
	whole article.

\begin{definition}\label{Defi:degreeMu}
For each $\mu=(\mu_1,\ldots,\mu_n)\in (\mathbb{R}_{\ge 0})^n\setminus \{0\}$, we define a \emph{degree function}
$\deg_\mu\colon \k[x_1,\ldots,x_n]\to \r_{\ge 0}\cup \{-\infty\}$
  by $\deg_\mu(0)=-\infty$ and
  \[\deg_\mu(\sum_{(a_1,\ldots,a_n)\in \mathbb{N}^n} \underbrace{c_{(a_1,\ldots,a_n)}}_{\in \k}\cdot  x_1^{a_1}x_2^{a_2}\cdots x_n^{a_n})=\max\left.\left\{\sum\limits_{i=1}^n a_i \mu_i\right| c_{(a_1,\ldots,a_n)}\not=0\right\}.\]

 We say that a polynomial $p\in\k[x_1,\ldots,x_n]$ is \emph{$\mu$-homogeneous of degree $\theta\in \r$} if $p$ is a finite sum of monomials $p_i$ with $\deg_\mu(p_i)=\theta$ for each $i$ $($where the zero polynomial is $\mu$-homogeneous of degree $\theta$ for each $\theta)$.
 
 We can then write every element $q\in\k[x_1,\ldots,x_n]\setminus \{0\}$ uniquely as  
 \[
 	q=\sum\limits_{\theta\in \r_{\ge 0}} q_\theta \, ,
 \]
 where each $q_\theta\in \k[x_1,\ldots,x_n]$ is $\mu$-homogeneous of degree $\theta$ (and only finitely many $q_{\theta}$ are non-zero). We then say
 that $q_\theta$ is the \emph{$\mu$-homogeneous part of $q$ of degree $\theta$}. 
 The \emph{$\mu$-leading part} of $q$ is the $\mu$-homogeneous part of $q$ of degree $\deg_\mu(q)$.
\end{definition}

\begin{remark}\label{RemVal}
	Note that if $\mu \in (\RR_{\geq 0})^n \setminus \{0\}$, then 
	\[\k(x_1, \ldots, x_n) \to \RR \cup \{\infty\} \, , \quad f/g \mapsto \deg_{\mu}(g)-\deg_{\mu}(f)\]
	is a valuation in the sense of \cite[p.75]{Ma1989Commutative-ring-t} where  $\k(x_1, \ldots, x_n)$ denotes the field of rational functions in $x_1, \ldots, x_n$ over $\k$. 
	Such valuations are often called ``monomial valuations'' in the literature.
\end{remark}

\begin{definition}\label{Defi:degmuendo}
  Let $\mu=(\mu_1,\ldots,\mu_n)\in (\mathbb{R}_{\ge 0})^n\setminus \{0\}$. For each $f=(f_1,\ldots,f_n)\in \End(\A^n) \setminus \{0\}$ we denote the \emph{$\mu$-degree of $f$} by 
  \[
  	\deg_\mu(f)=\inf\{\theta\in \r_{\ge 0}\mid \deg_\mu(f_i)\le \theta \mu_i\text{ for each }i\in \{1,\ldots,n\}\}
  \]
  and we say that $\deg_\mu(f)=\infty$ if the above set is empty.
  
  We moreover say that $f$ is \emph{$\mu$-algebraically stable} if $\deg_\mu(f)<\infty$ and $\deg_\mu(f^r)=\deg_\mu(f)^r$ for each $r\ge 1$.
\end{definition}

\begin{remark}
If $\mu=(1,\ldots,1)$, then $\deg_\mu(f)=\deg(f)$ is the standard degree and the notion of being $\mu$-algebraically stable is the standard notion of ``algebraically stable'', studied  for instance in~\cite{GuedjSibony,Bisi,Blanc2016}. The fact of being algebraically stable can be interpreted geometrically by looking at the behaviour of the endomorphism at infinity: \cite[Corollary 2.16]{Blanc2016}.
\end{remark}

In order to compute the dynamical degree of an endomorphism $f \in \End(\AA^n)$,
the following endomorphism associated to $f$ will be of great importance for us:

\begin{definition}\label{Def:LeadingPart}
	Let $f=(f_1,\ldots,f_n)\in \End(\A^n)$ be a dominant endomorphism, let $\mu=(\mu_1,\ldots,\mu_n)\in (\mathbb{R}_{\ge 0})^n$ be such that $\deg_\mu(f)=\theta<\infty$. We define the \emph{$\mu$-leading part of $f$} to be the endomorphism $g=(g_1,\ldots,g_n)\in \End(\A^n)$, where $g_j\in \k[x_1,\ldots,x_n]$ is the $\mu$-homogeneous part of $f_j$ of degree $\theta \mu_j$ for each $j\in \{1,\ldots,n\}$.
\end{definition}

The degree functions are studied in \S\ref{Sec:IneqAndProofOfProp:Dynamicaldegreeestimat}. Basic properties are given in   $\S\ref{SubSec:DegreeFunctions}$, and the relation with $\mu$-homogeneous endomorphisms is given in $\S\ref{SubSec:HomEndo}$ (we explain in particular when $\deg_\mu(f)=\infty$ in Lemma~\ref{Lem:HomDecEnd}). In $\S\ref{SubSec:IneqDegrees}$, we explain how degree functions allow us to give an estimate on the dynamical degrees, and sometimes to compute it exactly. In particular, we prove the following result (at the end of $\S\ref{SubSec:IneqDegrees}$).
\begin{mainproposition}
\label{Prop:Dynamicaldegreeestimate}
Let $f=(f_1,\ldots,f_n)\in \End(\A^n)$ be a dominant endomorphism. For each 
$\mu=(\mu_1,\ldots,\mu_n)\in (\mathbb{R}_{> 0})^n$  the following hold:
\begin{enumerate}[leftmargin=*]
\item\label{thetafinite} $\theta: = \deg_\mu(f) < \infty$,
\item\label{lambdasmaller}
The dynamical degree of $f$ satisfies $1 \leq \lambda(f)\le \theta$.
\item\label{lambdaequal}
Let $g \in \End(\AA^n)$ be the $\mu$-leading part of $f$. If $\theta > 1$, then
\[
	\lambda(f)=\theta
	\Leftrightarrow f \text{ is $\mu$-algebraically stable}\Leftrightarrow 
	g^r\not=0 \text{ for each }r\ge 1.
\]
\end{enumerate}
\end{mainproposition}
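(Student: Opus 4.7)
The plan is to deduce all three statements from two basic properties of the $\mu$-degree: a comparison with the standard degree valid because $\mu \in (\RR_{>0})^n$, and a submultiplicativity under composition. For the comparison, setting $m := \min_i \mu_i > 0$ and $M := \max_i \mu_i$, the pointwise bounds $m \deg(p) \leq \deg_\mu(p) \leq M \deg(p)$ for any polynomial $p$ yield
\[
    (m/M)\, \deg(h) \leq \deg_\mu(h) \leq (M/m)\, \deg(h)
\]
for any $h \in \End(\AA^n)$. For submultiplicativity, expanding a polynomial $p = \sum c_a x^a$ into monomials and substituting $x_i \mapsto h_i$ (with $\deg_\mu(h_i) \leq \deg_\mu(h) \cdot \mu_i$), the monomial $x^a$ of $\mu$-degree $\sum a_i \mu_i$ becomes a polynomial of $\mu$-degree at most $\deg_\mu(h) \cdot \sum a_i \mu_i$, whence $\deg_\mu(p \circ h) \leq \deg_\mu(p) \cdot \deg_\mu(h)$, and applied coordinate-wise, $\deg_\mu(f \circ h) \leq \deg_\mu(f) \cdot \deg_\mu(h)$.

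Statement \eqref{thetafinite} is then immediate: each $\deg_\mu(f_i) \leq M \deg(f_i)$ is finite, so $\theta = \max_i \deg_\mu(f_i)/\mu_i = \deg_\mu(f) < \infty$. For \eqref{lambdasmaller}, iterated submultiplicativity yields $\deg_\mu(f^r) \leq \theta^r$, and the comparison $\deg \leq (M/m) \deg_\mu$ gives $\deg(f^r) \leq (M/m)\, \theta^r$; taking $r$-th roots and using the definition of $\lambda(f)$ gives $\lambda(f) \leq \theta$. The lower bound $\lambda(f) \geq 1$ is automatic because $f$ dominant forces $f^r$ to be non-constant, so $\deg(f^r) \geq 1$.

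For \eqref{lambdaequal}, the key lemma is that the $\mu$-leading part of a composition coincides with the composition of the $\mu$-leading parts, whenever the latter is non-zero. Writing $f_i = g_i + r_i$ with $g_i$ of $\mu$-degree $\theta \mu_i$ and $\deg_\mu(r_i) < \theta \mu_i$, and similarly for a second endomorphism $h$ with leading part $g'$ and $\deg_\mu(h) = \theta'$, the expansion $f_i(h_1, \ldots, h_n) = g_i(g' + r') + r_i(h)$ shows that every term either involves at least one factor $r'_j$ or originates from $r_i(h)$, and therefore has $\mu$-degree strictly below $\theta \theta' \mu_i$, except for the single contribution $g_i(g'_1, \ldots, g'_n)$ of $\mu$-degree exactly $\theta \theta' \mu_i$ (when non-zero). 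Iterating, the $\mu$-leading part of $f^r$ is $g^r$ whenever $g^r \neq 0$, with $\deg_\mu(f^r) = \theta^r$, and $\deg_\mu(f^r) < \theta^r$ as soon as $g^r = 0$; this yields the second equivalence. To finish, if $f$ is $\mu$-algebraically stable then $\deg_\mu(f^r) = \theta^r$ and $\deg \geq (m/M) \deg_\mu$ forces $\deg(f^r) \geq (m/M) \theta^r$, so $\lambda(f) \geq \theta$ and combined with \eqref{lambdasmaller} one has $\lambda(f) = \theta$. Conversely, if some $r_0$ satisfies $\deg_\mu(f^{r_0}) < \theta^{r_0}$, submultiplicativity gives $\deg_\mu(f^{r r_0}) \leq \deg_\mu(f^{r_0})^r$, hence
\[
    \deg(f^{r r_0})^{1/(r r_0)} \leq (M/m)^{1/(r r_0)} \deg_\mu(f^{r_0})^{1/r_0} \xrightarrow[r\to\infty]{} \deg_\mu(f^{r_0})^{1/r_0} < \theta,
\]
forcing $\lambda(f) < \theta$.

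I expect the main obstacle to be the leading-part-of-composition lemma, which requires carefully verifying that no cross-term can attain the top $\mu$-degree $\theta \theta' \mu_i$; the hypothesis $\theta > 1$ is not used in the argument itself but ensures the statement is non-vacuous in view of \eqref{lambdasmaller}. Once the leading-part lemma is in place, the remaining implications reduce to chasing inequalities with Fekete's lemma and the degree comparison.
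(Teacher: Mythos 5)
Your argument is correct and follows essentially the same route as the paper: you split $f$ into its $\mu$-leading part $g$ plus terms of strictly smaller $\mu$-degree, show by induction that $(g^r)_i$ is the $\mu$-homogeneous part of $(f^r)_i$ of degree $\theta^r\mu_i$ (so $\deg_\mu(f^r)\le\theta^r$, with equality exactly when $g^r\neq 0$), and transfer between $\deg$ and $\deg_\mu$ using $\min_i\mu_i>0$ and $\max_i\mu_i$, exactly as in the paper's Remark~\ref{Rem:DegreeMu} and Lemma~\ref{Lemma:Dynamicaldegreeestimate}. The only cosmetic differences are that you work directly with $\mu\in(\RR_{>0})^n$ instead of invoking the paper's more general lemma allowing zero coordinates, and you close the implication $\lambda(f)=\theta\Rightarrow$ $\mu$-algebraic stability via submultiplicativity along multiples of $r_0$ rather than the paper's auxiliary $\theta'<\theta$.
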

\begin{remark}
Let $\mu=(1,\ldots,1)$. In this case, the $\mu$-degree is the classical degree and  Proposition~\ref{Prop:Dynamicaldegreeestimate}\ref{lambdasmaller} is the classical inequality $\lambda(f)\le \deg(f)$.
\end{remark}
\begin{remark}
Proposition~\ref{Prop:Dynamicaldegreeestimate} is false when we apply it 
to $\mu\in  (\mathbb{R}_{\ge 0})^n\setminus \{0\}$. For instance, if $f=(x_1,x_2^2)$, $\mu=(1,0)$,
then $\deg_\mu(f)=1$ but $1<\lambda(f)=2$.
\end{remark}

To apply Proposition~\ref{Prop:Dynamicaldegreeestimate} 
to compute the dynamical degree, we need to find some eigenvectors and eigenvalues. This is done here by looking at monomial maps associated to endomorphisms in $\End(\AA^n)$. These behave quite well with respect to degree functions (see Corollary~\ref{Coro:MonomialEndo}).

\begin{definition}\label{Defi:Matrixcontained}
Let $f=(f_1,\ldots,f_n)\in \End(\A^n)$ be an endomorphism such that $f_i\not=0$ for each $i$. We will say that a square matrix $M=(m_{i,j})_{i,j=1}^n\in \Mat_n(\NN)$ is \emph{contained in $f$} if for each $i\in \{1,\ldots,n\}$, the coefficient of the monomial $\prod_{j=1}^n x_j^{m_{i,j}}$ in $f_i$ is nonzero. The set of matrices 
that are contained in $f$ is then finite and non-empty.

The \emph{maximal eigenvalue of $f$} is defined to be  
\[
	\theta=\max \set{ \lvert \xi\rvert \in \r}{\xi \text{ is an eigenvalue of a matrix that is contained in $f$}} \, .
\] 
An element $\mu=(\mu_1,\ldots,\mu_n)\in (\mathbb{R}_{\ge  0})^n \setminus \{ 0 \}$ 
is a \emph{maximal eigenvector of $f$} if $\deg_\mu(f_i) = \theta \mu_i$ for each $i\in \{1,\ldots,n\}$. 
In particular, we then get $\deg_{\mu}(f) = \theta < \infty$.
\end{definition}

It often happens that we cannot apply Proposition~\ref{Prop:Dynamicaldegreeestimate} to compute the dynamical degree, but that we can do it by allowing $\mu$ to have some coordinates, but not all, to be equal to zero. In fact, the following generalization of Proposition~\ref{Prop:Dynamicaldegreeestimate}
is our main tool to compute dynamical degrees:

\begin{mainproposition}\label{Prop:MonomialEig}
Let $f=(f_1,\ldots,f_n)\in \End(\A^n)$ be a dominant endomorphism with maximal eigenvalue $\theta$. 
Then the following holds:
\begin{enumerate}[leftmargin=*]
\item
\label{Prop:MonomialEig1}
There exists a maximal eigenvector 
$\mu=(\mu_1,\ldots,\mu_n)\in (\mathbb{R}_{\ge  0})^n \setminus \{0\}$ of $f$.
\item
\label{Prop:MonomialEig2}
We have $1\le \lambda(f)\le \theta \le \deg(f)$.
\item
\label{Prop:MonomialEig3}
For each maximal eigenvector $\mu$ of $f$, we have $\theta=\deg_\mu(f)$, and the following hold:
\begin{enumerate}
\item\label{MonomialEig3i}
If $f$ is $\mu$-algebraically stable, then $\lambda(f)=\theta$.
\item\label{MonomialEig3ii}
If $\lambda(f)=\theta$,  $\theta>1$ and $\mu\in (\r_{>0})^n$, then $f$ is $\mu$-algebraically stable.
\item\label{MonomialEig3iii}
Let $g \in \End(\A^n)$ be the $\mu$-leading part of $f$. If $\theta > 1$, then
$f$ is $\mu$-algebraically stable if and only if 
for each $r \geq 1$ there is $i \in \{1, \ldots, n\}$ with $\mu_i > 0$ and such that the $i$-th component of 
$g^r$ is non-zero.
\end{enumerate}
\end{enumerate}
\end{mainproposition}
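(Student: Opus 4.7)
The argument splits along the three parts of the statement.

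For \ref{Prop:MonomialEig1}, I would realise the question as a fixed-point problem. Let $S_i \subset \NN^n$ denote the set of exponent vectors of the monomials of $f_i$ with non-zero coefficient, and introduce the max-plus operator
\[
T \colon (\RR_{\geq 0})^n \to (\RR_{\geq 0})^n, \qquad T(\mu)_i = \max_{v \in S_i} v \cdot \mu.
\]
The condition ``$\mu$ is a maximal eigenvector of $f$'' translates directly to $T(\mu) = \theta\mu$. The operator $T$ is continuous, monotone and positively homogeneous of degree one, and since $f$ is dominant, every variable $x_k$ appears in some $f_i$, so $T(\mu) \neq 0$ for each $\mu \in (\RR_{\geq 0})^n \setminus \{0\}$. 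A nonlinear Perron--Frobenius argument, for instance applying Brouwer's fixed-point theorem to the continuous self-map $\mu \mapsto T(\mu)/\|T(\mu)\|_1$ of the standard simplex and then maximising the resulting eigenvalue by comparison with the Perron--Frobenius eigenvector of a matrix $M^*$ contained in $f$ achieving $\rho(M^*) = \theta$, produces $\mu \in (\RR_{\geq 0})^n \setminus \{0\}$ with $T(\mu) = \theta \mu$. This $\mu$ is a maximal eigenvector of $f$, and the equality $\deg_{\mu}(f_i) = \theta\mu_i$ shows in passing that $\deg_{\mu}(f) = \theta$.

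For \ref{Prop:MonomialEig2}, the bound $\theta \leq \deg(f)$ is immediate from the fact that each row of any matrix contained in $f$ has $\ell^1$-norm at most $\deg(f)$, while $\lambda(f) \geq 1$ follows from $f$ being dominant. The main obstacle is $\lambda(f) \leq \theta$, which I would prove by strong induction on $n$ in the slightly stronger form: \emph{for every $\delta > 0$ there exists $\tilde\mu \in (\RR_{>0})^n$ with $\deg_{\tilde\mu}(f) \leq \theta + \delta$}, after which $\lambda(f) \leq \theta + \delta$ follows from Proposition~\ref{Prop:Dynamicaldegreeestimate}\ref{lambdasmaller} and letting $\delta \to 0$ concludes. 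If a maximal eigenvector $\mu$ given by \ref{Prop:MonomialEig1} is strictly positive, take $\tilde\mu = \mu$. Otherwise, set $I = \{i: \mu_i > 0\}$ and $J = \{j: \mu_j = 0\}$. From $\deg_\mu(f_j) = 0$ for $j \in J$ one reads off that $f_j \in \k[x_k : k \in J]$, so $g = (f_j)_{j \in J}$ defines an endomorphism of $\A^{|J|}$; moreover, any matrix contained in $g$ extends to a block-lower-triangular matrix contained in $f$ with the same spectral radius, so the maximal eigenvalue of $g$ is at most $\theta$. The strong induction applied to $g$ yields $\tilde\mu_g \in (\RR_{>0})^{|J|}$ with $\deg_{\tilde\mu_g}(g) \leq \theta + \delta/2$; setting $\tilde\mu_i = \mu_i$ for $i \in I$ and $\tilde\mu_j = \epsilon (\tilde\mu_g)_j$ for $j \in J$ with $\epsilon > 0$ sufficiently small, a direct computation using $\deg_\mu(f_i) \leq \theta\mu_i$ for $i \in I$ then gives $\deg_{\tilde\mu}(f) \leq \theta + \delta$.

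For \ref{Prop:MonomialEig3}, the equality $\deg_\mu(f) = \theta$ was noted above. The key observation for the remaining assertions is that the $\mu$-leading part is multiplicative under composition: writing $f_k = g_k + h_k$ where $g_k$ is the $\mu$-leading part and $\deg_\mu(h_k) < \theta\mu_k$, an induction on $r$ shows that the $\mu$-leading part of $(f^r)_i$ equals $(g^r)_i$ whenever $(g^r)_i \neq 0$, and that otherwise $\deg_\mu((f^r)_i) < \theta^r \mu_i$; this gives \ref{MonomialEig3iii} directly. For \ref{MonomialEig3i}, $\mu$-algebraic stability forces some $i$ with $\mu_i > 0$ to satisfy $\deg_\mu((f^r)_i) = \theta^r \mu_i$, and extracting a monomial of maximal $\mu$-weight in $(f^r)_i$ produces a monomial of total degree at least $\theta^r \mu_i / \max_k \mu_k$, so $\lambda(f) \geq \theta$; combined with \ref{lambdasmaller} this gives equality. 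For \ref{MonomialEig3ii}, the hypothesis $\mu \in (\RR_{>0})^n$ places us directly in the setting of Proposition~\ref{Prop:Dynamicaldegreeestimate}\ref{lambdaequal}, which yields $\mu$-algebraic stability.
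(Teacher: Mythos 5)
Your reformulation of \ref{Prop:MonomialEig1} as the fixed-point problem $T(\mu)=\theta\mu$ for the operator $T(\mu)_i=\max_{v\in S_i} v\cdot\mu$ is the right one (indeed $T(\mu)_i=\deg_\mu(f_i)$), but the argument you give does not produce an eigenvector with eigenvalue exactly $\theta$, and this is the only nontrivial point of the whole proposition. Brouwer applied to $\mu\mapsto T(\mu)/\lVert T(\mu)\rVert_1$ yields some $\mu\geq 0$, $\mu\neq 0$, with $T(\mu)=c\,\mu$, but gives no control on $c$. Already for $f=(x_1,x_2^2)$ the normalised map on the $1$-simplex has exactly the two fixed points $(1,0)$ and $(0,1)$, with nonlinear eigenvalues $1$ and $2=\theta$; Brouwer cannot tell them apart, and $(1,0)$ is not a maximal eigenvector of $f$. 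The phrase ``maximising the resulting eigenvalue by comparison with the Perron--Frobenius eigenvector of $M^*$'' is not an argument: the Perron--Frobenius eigenvector $w$ of $M^*$ only satisfies $T(w)\geq \theta w$, is in general not an eigenvector of $T$, and no mechanism is indicated that turns either $w$ or the Brouwer fixed point into a $\mu$ with $T(\mu)=\theta\mu$. What is needed is a genuine nonlinear Perron--Frobenius statement, e.g.\ that the cone spectral radius of the monotone homogeneous map $T$ is attained as an eigenvalue, together with the identification of this cone spectral radius with $\max_M\rho(M)$ over the matrices $M$ contained in $f$ --- and the latter uses precisely the fact that this family of matrices is stable under exchanging rows. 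This is exactly what the paper establishes in Proposition~\ref{prop.StepOne}, via the perturbation Lemma~\ref{lem.D_is_dense} (making the matrices have pairwise distinct eigenvalues), Lemma~\ref{lem.distincteg.thenok} (where the row-exchange property enters), and a compactness argument; your sketch skips this entirely, so part \ref{Prop:MonomialEig1} has a genuine gap.

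Granted \ref{Prop:MonomialEig1}, the rest of your plan is essentially sound and partly follows a different route than the paper: for $\lambda(f)\leq\theta$ you perturb a degenerate maximal eigenvector into a strictly positive weight $\tilde\mu$ with $\deg_{\tilde\mu}(f)\leq\theta+\delta$ and invoke Proposition~\ref{Prop:Dynamicaldegreeestimate}, and for \ref{MonomialEig3i} you bound $\deg(f^r)$ from below directly by a monomial of large $\mu$-weight; the paper instead passes through Lemma~\ref{lem.DinhNGuyen} ($\lambda(f)=\max\{\lambda_1,\lambda_2\}$) and Lemma~\ref{Lemma:Dynamicaldegreeestimate} at both places. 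Two small inaccuracies: the block-triangular extension of a matrix contained in $(f_j)_{j\in J}$ has spectral radius at least (not necessarily equal to) that of the extracted matrix, which is all you need; and in \ref{MonomialEig3i} the upper bound to combine with is part \ref{Prop:MonomialEig2} of the present proposition rather than Proposition~\ref{Prop:Dynamicaldegreeestimate}\ref{lambdasmaller}, since $\mu$ may have zero coordinates. These are cosmetic; the proposal stands or falls with part \ref{Prop:MonomialEig1}, and there it is incomplete.
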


\begin{remark}
In Proposition~\ref{Prop:MonomialEig}\ref{Prop:MonomialEig1}, there are examples with no possibility for $\mu$ to be in $(\mathbb{R}_{>0})^n$, as the examples $f = (x_1, x_2^2)\in \End(\A^2)$ or $f=(x_1,x_3,x_2+x_3^2)\in \Aut(\A^3)$ show. Hence, Proposition~\ref{Prop:Dynamicaldegreeestimate} cannot be directly applied in order to prove Proposition~\ref{Prop:MonomialEig}. However, if some coordinates of $\mu$ are zero, then a linear projection is preserved (this follows from Lemma~\ref{Lem:HomDecEnd}, see also Corollary~\ref{Coro:Computelambda}). To prove Proposition~\ref{Prop:MonomialEig}, we will use Lemma~\ref{Lemma:Dynamicaldegreeestimate}, that is a version of Proposition~\ref{Prop:Dynamicaldegreeestimate} that also works for $\mu\in (\mathbb{R}_{\ge0})^n\setminus \{0\}$.
\end{remark}
\begin{remark}
 The implication of Proposition~\ref{Prop:MonomialEig}\ref{Prop:MonomialEig3}\ref{MonomialEig3i} 
 is not an equivalence, as we show in Example~\ref{ExamplePropBImpNotequi} below.
\end{remark}

The proof of Proposition~\ref{Prop:MonomialEig} is given in Section~\ref{Sec:MatrixEndoProofProp:MonomialEig}. 
For each dominant endomorphism
$f\in \End(\A^n)$, Proposition~\ref{Prop:MonomialEig}\ref{Prop:MonomialEig1} gives the existence of a maximal eigenvector~$\mu$. Moreover, Proposition~\ref{Prop:MonomialEig}\ref{Prop:MonomialEig3} shows that if $f$ is $\mu$-algebraically stable then $\lambda(f)$ is equal to the maximal eigenvalue $\theta$ of $f$. We will use this to compute the dynamical degree of many endomorphisms of $\A^n$.

The following result allows to compute all dynamical degrees of permutation-elementary endomorphism of $\A^n$, and generalises in particular Proposition~\ref{Prop:Jonsson}. 
Its proof is given in $\S\ref{Sec:PermEl}$:
\begin{mainproposition}\label{Prop:PermElementary}
Let  $f\in \Aut(\A^n)$  be a permutation-elementary automorphism. If the maximal eigenvalue $\theta$ of $f$ is bigger than $1$, there exists a maximal eigenvector $\mu$ of $f$ such that $f$ is $\mu$-algebraically stable. In particular, the dynamical degree $\lambda(f)$ is equal to the maximal eigenvalue $\theta$ of $f$, which is a Handelman number.
\end{mainproposition}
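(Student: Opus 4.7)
The plan is to apply Proposition~\ref{Prop:MonomialEig}. Write the permutation-elementary $f$ in the form $f = \tau \circ e$, where $\tau$ is the coordinate permutation attached to some $\sigma \in S_n$ and $e$ is elementary modifying a single coordinate with index $i_0$, so that $f_i = x_{\sigma(i)}$ for $i \neq i_0$ and $f_{i_0} = x_{\sigma(i_0)} + p$, where $p$ is a polynomial not involving $x_{\sigma(i_0)}$. Every matrix $M$ contained in $f$ therefore has row $i$ equal to the standard basis vector $e_{\sigma(i)}$ for $i \neq i_0$, while row $i_0$ is either $e_{\sigma(i_0)}$ or an exponent vector $\alpha$ of some monomial appearing in $p$. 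The first choice produces a permutation matrix of spectral radius $1$, so the hypothesis $\theta > 1$ forces the matrix $M^\star$ realising $\theta$ to have row $i_0$ equal to some exponent $\alpha^\star$ of a non-constant monomial of $p$.

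The next step is to produce a maximal eigenvector of $f$. Applying Perron--Frobenius to the non-negative integer matrix $M^\star$ yields an eigenvector $\mu \in (\RR_{\geq 0})^n \setminus \{0\}$ with $M^\star \mu = \theta \mu$. For $i \neq i_0$ this reads $\mu_{\sigma(i)} = \theta \mu_i$, which is exactly $\deg_\mu(f_i) = \theta \mu_i$, while for $i = i_0$ it reads $\mu \cdot \alpha^\star = \theta \mu_{i_0}$, so $\deg_\mu(f_{i_0}) \geq \theta \mu_{i_0}$. To upgrade this to equality and make $\mu$ a maximal eigenvector in the sense of Definition~\ref{Defi:Matrixcontained}, I would argue by contradiction: any exponent $\alpha$ used in $f_{i_0}$ with $\mu \cdot \alpha > \theta \mu_{i_0}$ would, together with the relations $\mu_{\sigma(i)} = \theta \mu_i$ for $i \neq i_0$, make the matrix obtained from $M^\star$ by replacing row $i_0$ with $\alpha$ admit, via the Collatz--Wielandt characterisation, a Perron root strictly larger than $\theta$, contradicting the definition of the maximal eigenvalue.

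It then remains to verify the non-vanishing condition of Proposition~\ref{Prop:MonomialEig}\ref{Prop:MonomialEig3}\ref{MonomialEig3iii}, which is the main technical step. The $\mu$-leading part $g$ of $f$ has $g_i = x_{\sigma(i)}$ for $i \neq i_0$ with $\mu_i > 0$, and $g_{i_0}$ is a non-zero sum of those monomials of $p$ whose $\mu$-degree equals $\theta \mu_{i_0}$, possibly together with $x_{\sigma(i_0)}$. The support $S = \{i \mid \mu_i > 0\}$ is stable under the forward dynamics encoded by $M^\star$, and on $S$ the map $g$ behaves like a sum of monomial substitutions indexed by the monomials used in $g_{i_0}$. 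The key observation is that the monomials contributing to any component of $g^r$ correspond to distinct length-$r$ walks in the weighted support graph of $M^\star$, their exponent vectors are therefore pairwise distinct, and their coefficients are products of non-zero elements of $\k$, so no cancellation can occur and $g^r$ is non-zero on $S$ for every $r \geq 1$. This non-vanishing is the main obstacle of the argument; once it is established, Proposition~\ref{Prop:MonomialEig}\ref{Prop:MonomialEig3}\ref{MonomialEig3i} yields $\lambda(f) = \theta$.

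Finally, to see that $\theta$ is a Handelman number, expand the characteristic polynomial of $M^\star$ along the rows $i \neq i_0$, each of which contributes a single entry equal to $1$. This expansion shows that the characteristic polynomial factors as $t^k\bigl(t^m - a_1 t^{m-1} - \dots - a_m\bigr)$ with $a_j \in \NN$, so $\theta$ is a positive real root of the monic integral polynomial $t^m - a_1 t^{m-1} - \dots - a_m$ whose non-leading coefficients are all non-positive, hence a Handelman number by definition.
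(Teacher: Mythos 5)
Your overall plan (take the matrix $M^\star$ realising $\theta$, use its Perron--Frobenius eigenvector $\mu$, show $\mu$ is a maximal eigenvector, then check the non-vanishing criterion of Proposition~\ref{Prop:MonomialEig}\ref{Prop:MonomialEig3}\ref{MonomialEig3iii}) is the paper's plan, but both steps you flag as ``the main technical'' ones are not actually proved by your arguments. The decisive gap is the no-cancellation claim: it is false that distinct length-$r$ walks yield pairwise distinct exponent vectors. For example, with $\theta=2$ and $\mu=(4,2,1)$, the $\mu$-leading part of the shift-like automorphism $(x_3+x_1^2+\lambda x_1x_2^2+\nu x_2^4,\,x_1,\,x_2)$ is $g=(x_1^2+\lambda x_1x_2^2+\nu x_2^4,\,x_1,\,x_2)$, and the coefficient of $x_1^4$ in $(g^2)_1$ equals $1+\lambda+\nu$: three different substitution walks produce the same monomial, and their contributions can cancel. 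So ``no cancellation'' is simply not available; what is true (and is all that is needed) is that no whole component of $g^r$ vanishes, and this is exactly the content of Lemma~\ref{Lem:IterationNotzero} in the paper, whose proof is a genuine induction using a closed embedding that intertwines $g$ with a lower-dimensional endomorphism. Your walk-counting argument does not establish this, so the key step of your proof is unsupported.

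The eigenvector step also has a flaw, though a reparable one. If some exponent $\alpha$ of $p$ had $\mu\cdot\alpha>\theta\mu_{i_0}$, Collatz--Wielandt applied to the matrix $M'$ with row $i_0$ replaced by $\alpha$ only gives $\rho(M')\ge\min_{i\colon\mu_i>0}(M'\mu)_i/\mu_i=\theta$, because the ratio equals $\theta$ at the other positive coordinates of $\mu$; this yields no contradiction, and the general principle ``strict inequality in one positive coordinate forces a strictly larger Perron root'' is false, as $\left(\begin{smallmatrix}\theta&0\\1&\theta\end{smallmatrix}\right)$ with test vector $(1,1)$ shows. The inequality you want does hold here, but for a structural reason: since $\theta>1$, $\mu$ vanishes off the $\sigma$-cycle $C$ through $i_0$ and is geometric along it, so $\mu\cdot\alpha>\theta\mu_{i_0}$ would mean that the monic polynomial $x^\ell-\sum_{k}\alpha_{\sigma^k(i_0)}x^{k-1}$ (with $\ell$ the length of $C$) is negative at $\theta$, hence has a real root $>\theta$, which is an eigenvalue of the companion-type block of $M'$ on $C$ and therefore of a matrix contained in $f$, contradicting maximality. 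This is in effect what the paper does by conjugating to the normal form of Lemma~\ref{Lem:Conjugation_Permutation_Elementary} and writing $\mu=(0,\dots,0,\theta^{n-m},\dots,\theta,1)$ explicitly in Proposition~\ref{Prop:PermutationElementary} (or what Proposition~\ref{prop.StepOne} guarantees in general). Your Handelman-number argument is fine, but without a correct replacement for Lemma~\ref{Lem:IterationNotzero} the proof is incomplete.
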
 

Proposition~\ref{Prop:PermElementary} is false if we replace ``permutation-elementary'' by ``permutation-triangular''
 (see Example~\ref{Exa:PermTriangularNotAlgStab} for examples in dimension~$3$). We can however obtain the following result, which is proven in $\S\ref{Sec:AffineTriangularA3}$:

\begin{mainproposition}\label{Prop:AffineTriangularA3}
	Every affine-triangular automorphism  $f\in \Aut(\A^3)$  is conjugate to a permutation-triangular
	automorphism $f'\in \Aut(\A^3)$ such that $\deg(f')\le \deg(f)$ and such that $f'$ has
	the following property: either the maximal eigenvalue $\theta$ of $f'$ is equal to $1$, or $f'$ is
	$\mu$-algebraically stable for each maximal eigenvector $\mu$. In particular, the dynamical degrees
	$\lambda(f)$ and $\lambda(f')$ are equal to the maximal eigenvalue $\theta$ of $f'$, 
	which is a Handelman number.
\end{mainproposition}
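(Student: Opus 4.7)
The plan is to prove the proposition in two main stages: first reduce $f$ to a permutation-triangular conjugate via an affine change of coordinates, then verify the algebraic-stability condition by a case analysis over the six permutations in $S_3$.

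For the reduction, I would write $f = \alpha \circ t$ with $\alpha$ affine and $t$ triangular. The Bruhat decomposition $\GL_3(\k) = B_- \cdot S_3 \cdot B_-$ factorises the linear part of $\alpha$ as $L_1 P L_2$ with $L_1, L_2$ lower triangular and $P$ a permutation matrix; extending this to affine maps $\alpha = \ell_1 \circ \pi \circ \ell_2$ (by placing the translation part of $\alpha$ into $\ell_1$) and conjugating by $\ell_1$ yields
\[
\ell_1^{-1} \circ f \circ \ell_1 \;=\; \pi \circ (\ell_2 \circ t \circ \ell_1),
\]
which is permutation-triangular because a composition of lower-triangular automorphisms is triangular. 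Since $\ell_1$ is an affine automorphism, this conjugation preserves the degree, so the conjugate $f' := \ell_1^{-1} f \ell_1$ satisfies $\deg(f') = \deg(f)$. Further normalisations by conjugation with affine-triangular automorphisms (rescalings and translations preserving the permutation-triangular form) can then be used to bring $f' = \pi \circ t'$ into a convenient canonical form.

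For the stability assertion, I would analyse $f'$ case-by-case according to $\pi \in S_3$. Since every matrix contained in a triangular automorphism is lower triangular with diagonal entries in $\{0, 1\}$, matrices contained in $f'$ have the form $P \cdot M$ with $M$ of this shape, making the maximal eigenvalue $\theta$ and the maximal eigenvectors $\mu \in (\RR_{\ge 0})^3 \setminus \{0\}$ computable explicitly in each of the six cases. When $\theta = 1$ there is nothing left to prove. When $\theta > 1$, for each maximal eigenvector $\mu$ I would verify the hypothesis of Proposition~\ref{Prop:MonomialEig}\ref{Prop:MonomialEig3}\ref{MonomialEig3iii}: iterating the $\mu$-leading part $g$ of $f'$, show that for every $r \ge 1$ some component $(g^r)_i$ with $\mu_i > 0$ is non-zero. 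Proposition~\ref{Prop:MonomialEig}\ref{Prop:MonomialEig3}\ref{MonomialEig3i} then gives $\lambda(f') = \theta$, and the Handelman property of $\theta$ follows from the explicit characteristic polynomials of the matrices $P \cdot M$ combined with Lemma~\ref{Lemm:Handelman}.

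The main obstacle is ensuring the stronger ``for every maximal eigenvector $\mu$'' property demanded by the statement. As Example~\ref{Exa:PermTriangularNotAlgStab} shows, a naive permutation-triangular representative need not be $\mu$-algebraically stable for every maximal eigenvector, so the Bruhat decomposition and the subsequent normalisation must be chosen carefully to eliminate the monomials of $t'$ that could create additional ``bad'' maximal eigenvectors or produce cancellations in iterates of the leading part. Carrying out this bookkeeping uniformly through all six cases of $S_3$ is the technical heart of the proof.
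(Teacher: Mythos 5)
Your first stage (Bruhat decomposition of the linear part, conjugation by the lower-triangular factor to reach a permutation-triangular representative of the same degree) is exactly the paper's Proposition~\ref{Affine-Triangular-Permutation}, so that part is fine. The genuine gap is in the second stage. You correctly identify, via Example~\ref{Exa:PermTriangularNotAlgStab}, that a permutation-triangular representative need not be $\mu$-algebraically stable for every maximal eigenvector, but your proposed remedy --- choosing the Bruhat factorisation and some affine normalisations (rescalings, translations) ``carefully'' so as to eliminate the bad monomials --- cannot work and is not a proof sketch of the hard step. The automorphism $(x_3-x_2^n,x_1,x_2+x_1^n)$ of Example~\ref{Exa:PermTriangularNotAlgStab} is already permutation-triangular and fails $\mu$-algebraic stability for \emph{every} maximal eigenvector; no affine conjugation preserving the permutation-triangular shape removes this failure, because the obstruction lives in high-degree monomials of the triangular factor. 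What is actually needed, and what the paper supplies, is: (a) a classification of how iterates of the $\mu$-leading part can degenerate (Lemma~\ref{Lem:PossibilitiesForgrzero}, a geometric argument about non-dominant triangular endomorphisms, not a six-case bookkeeping over $S_3$); (b) the conclusion that failure of stability forces $f'$ into one of exactly two explicit shapes (Lemma~\ref{Lem:PermTriangularNotAlgStab}); and (c) in each shape, a conjugation by a \emph{non-affine} triangular automorphism of the form $(x_1,x_2,x_3+s(x_2))$ or $(x_1,x_2+s(x_1),x_3)$ that strictly decreases $\deg(p_2)$ without increasing $\deg(f')$, followed by induction on that degree until either $\theta=1$ or stability holds for all maximal eigenvectors.

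A symptom of the missing mechanism is your claim that $\deg(f')=\deg(f)$: the statement only asserts $\deg(f')\le\deg(f)$ precisely because the descent in (c) uses non-affine triangular conjugations and may strictly lower the degree (as it must, e.g., for the example above, where $\lambda(f)=1$ while the initial maximal eigenvalue is $\sqrt{n}>1$, so no representative of the same degree can satisfy the conclusion). Your final paragraph defers this entire descent-and-classification argument to ``bookkeeping,'' but it is the technical heart of the proposition and is not recoverable from a case analysis over the permutations alone; as written, the proposal does not establish the statement.
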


The proof of Theorem~\ref{Thm:AffTriang3} is given at the end of $\S\ref{Sec:AffineTriangularA3}$, directly after proving Proposition~\ref{Prop:AffineTriangularA3}, as it follows almost directly from this result.
We use these results in $\S\ref{SubSec:WeakPerron}$, to prove Theorem~\ref{Theorem:PerronAuto}.
 
\addsubsection*{Acknowledgements}
The authors thank the referee for his careful reading and helpful suggestions. We thank 
Jean-Philippe Furter and Pierre-Marie Poloni for helpful discussions on dynamical degrees of automorphisms of $\A^3$ and Christian Urech for indicating us the result of Mattias Jonsson (Proposition~\ref{Prop:Jonsson}) that inspired our generalisation.

\section{Inequalities associated to degree functions and the proof of Proposition~\ref{Prop:Dynamicaldegreeestimate}}
\label{Sec:IneqAndProofOfProp:Dynamicaldegreeestimat}

\subsection{Definitions of elementary, affine and triangular automorphisms}
	 Let us recall the following classical definitions (even if our definition of elementary is slightly more restrictive than what is used in the literature):
	\begin{definition}\label{definition:triangularaffine}
	An endomorphism $f=(f_1,\ldots,f_n)\in \End(\A^n)$ is said to be 
	\begin{itemize}[leftmargin=*]
	\item\emph{triangular} if $f_i\in \k[x_1,\ldots,x_i]$ for each $i\in \{1,\ldots,n\}$,
	\item\emph{elementary} if $f_i=x_i$ for  for each $i\in \{1,\ldots,n-1\}$.
	\item\emph{an affine automorphism} if $f\in \Aut(\A^n)$ and if $\deg(f)=1$,
	\item\emph{a permutation of the coordinates} if $\{f_1,\ldots,f_n\}=\{x_1,\ldots,x_n\}$,
	\item \emph{affine-triangular} if $f=\alpha\circ \tau$ where $\alpha$ is an affine automorphism and $\tau$ is a triangular endomorphism,
	\item \emph{affine-elementary} if $f=\alpha\circ e$ where $\alpha$ is an affine automorphism and $e$ is an elementary endomorphism,
	\item \emph{permutation-triangular} if $f=\alpha\circ \tau$ where $\alpha$ is a permutation of the coordinates and $\tau$ is a triangular endomorphism.
	\item \emph{permutation-elementary} if $f=\alpha \circ e$ where $\alpha$ is a permutation of
	the coordinates and $e$ is an elementary endomorphism.
	\end{itemize}
\end{definition}

For each $n\le 4$, if $\mathrm{char}(\k)\not=2$, every automorphism of $\A^n$ of degree $2$ is conjugate, by an affine automorphism, to an affine-triangular automorphism, 
see \cite{MGO}. This result is false in dimension $n=5$ \cite{Sun}, as for example
\[
	f = (x_1 + x_2 x_4, x_2 + x_1 x_5 + x_3 x_4, x_3 - x_2 x_5, x_4, x_5) \in \Aut(\AA^5)
\]
shows: the Jacobian of the homogeneous part of degree $2$ of 
an affine-triangular automorphism of degree $\leq 2$ contains a zero-column, but the Jacobian of
the homogeneous part of degree $2$ of $f$ contains linearly independent columns 
(see also \cite[Theorem~3.2]{Sun}).

There are quite a few automorphisms of $\A^3$ of degree $3$ that are not conjugate, by an affine automorphism, to affine-triangular automorphisms. More precisely, when $\k$ is algebraically closed, then
each automorphism of $\AA^3 = \Spec(\k[x, y, z])$ 
of degree $3$ is conjugate, by an affine automorphism, either to an affine-triangular automorphism or
to an automorphism of the form
\[
		\label{eq.non_triangular}
		\tag{$\ast$}
		\alpha(x + yz + za(x, z), y + a(x, z) + r(z), z) \in \Aut(\AA^3)
\]
where $a \in \k[x, z] \setminus \k[z]$ is homogeneous of degree $2$,
$r \in \k[z]$ is of degree $\leq 3$ and $\alpha$ is an affine automorphism, see 
\cite[Theorem 3]{BlSa2019Automorphisms-of-t}. In fact, non of the automorphisms in~\eqref{eq.non_triangular}
is conjugated, by an affine automorphism, to an affine-triangular automorphism, see~\cite[Proposition~3.9.4]{BlSa2019Automorphisms-of-t}.

For $\k = \CC$ various (dynamical) properties of 
the affine-elementary automorphisms
$(x_0+x_1+x_0^q x_2^d, x_0, \alpha x_2) \in \Aut(\AA^3)$ 
with $\alpha \in \CC$, $0 < |\alpha| \leq 1$, $q \geq 2$, $d \geq 1$ 
are studied in~\cite{DeLe2018Dynamics-of-a-fami} and in particular their dynamical 
degree is computed, which is equal to the integer $q$.

\subsection{Existence of dynamical degrees}

We recall the following folklore result, which implies that the dynamical degree is well-defined.
\begin{lemma}
	\label{lem.dynamical_degree_exists}
	Let $(a_r)_{r\ge 1}$ be a sequence of real numbers in $\r_{\ge 1}$ such that $a_{r+s}\le a_{r}\cdot a_s$ for each $r,s\ge 1$. Then, $((a_r)^{1/r})_{r\ge 1}$ is a  sequence that converges towards 
	$\inf_{r\geq 1} ((a_r)^{1/r})\in \r_{\ge 1}$.
\end{lemma}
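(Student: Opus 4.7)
The plan is to prove this classical version of Fekete's subadditivity lemma by showing that the sequence $b_r := (a_r)^{1/r}$ both has limit inferior at least $\inf_{r \geq 1} b_r$ (trivial from the definition of infimum) and has limit superior at most $\inf_{r \geq 1} b_r$ (the substantial step). Let me call this infimum $\ell$. First, I would observe that $\ell \geq 1$, since every $a_r \geq 1$ forces $b_r \geq 1$.

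The core step is to bound $\limsup b_r$. Fix $\varepsilon > 0$ and choose $s \geq 1$ with $b_s < \ell + \varepsilon$. For any $r \geq s$, I would write $r = qs + t$ with $q \geq 1$ and $0 \leq t \leq s-1$ via Euclidean division, and iterate the subadditivity hypothesis to obtain
\[
	a_r = a_{qs+t} \leq a_s^{\,q} \cdot a_t \, ,
\]
with the convention that the factor $a_t$ is dropped when $t=0$ (or equivalently set $a_0:=1$, which is compatible with $a_r \geq 1$). Taking $r$-th roots yields
\[
	b_r \leq (a_s)^{q/r} \cdot (a_t)^{1/r} \, .
\]
As $r \to \infty$, one has $q/r \to 1/s$, while $a_t$ takes only finitely many values (one for each $t \in \{0,1,\ldots,s-1\}$), so $(a_t)^{1/r} \to 1$. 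Consequently
\[
	\limsup_{r \to \infty} b_r \leq (a_s)^{1/s} = b_s < \ell + \varepsilon \, .
\]

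Since $\varepsilon > 0$ was arbitrary, $\limsup_{r \to \infty} b_r \leq \ell$. Combined with the trivial bound $\liminf_{r \to \infty} b_r \geq \ell$, this gives $\lim_{r \to \infty} b_r = \ell \in \mathbb{R}_{\geq 1}$, which is the desired conclusion. I do not expect any real obstacle: the only mildly delicate point is the handling of the residue $t$ in the Euclidean division, which is dispatched by noting that $a_t$ is bounded uniformly in $r$ so its $r$-th root tends to $1$.
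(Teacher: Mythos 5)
Your proof is correct. Where you differ from the paper: the paper's own argument is a two-line reduction, observing that $(\log a_r)_{r\ge 1}$ is subadditive and nonnegative and then citing the classical (additive) Fekete subadditivity lemma to get convergence of $\frac{\log a_r}{r}$ to its infimum; you instead reprove the multiplicative statement from scratch via Euclidean division $r=qs+t$, the iterated bound $a_r\le a_s^{\,q}a_t$, and the observation that $(a_t)^{1/r}\to 1$ since $t$ ranges over finitely many values. Your handling of the remainder (the convention $a_0=1$, the convergence $q/r\to 1/s$, and the uniform bound on the finitely many $a_t$) is exactly the delicate point of the classical proof and is done correctly, as is the trivial bound $\liminf (a_r)^{1/r}\ge \inf_r (a_r)^{1/r}\ge 1$. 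What the paper's route buys is brevity and a clean appeal to the literature; what yours buys is a self-contained argument that makes the mechanism visible and avoids even the (harmless) passage through logarithms. Both establish the same statement, including that the limit equals the infimum and lies in $\r_{\ge 1}$.
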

\begin{proof}	
As $(\log(a_r))_{r\ge 1}$ is subbadditive,  $(\frac{\log(a_r)}{r})_{r\ge 1}$ converges to $\inf_{r \geq 1}(\frac{\log(a_r)}{r}) \geq 0$ by Fekete's subadditivity 
	Lemma~(see \cite[Satz II]{Fe1923Uber-die-Verteilun} or \cite[Lemma~1.2.1]{St1997Probability-theory}).
\end{proof}

In case $\mu = (\mu_1, \ldots, \mu_n) \in (\RR_{\geq 0})^n \setminus \{0\}$ is of the from
$\mu_1 = \ldots = \mu_m = 0$ and $\mu_{m+1} = \ldots = \mu_n = 1$ for some $0 \leq m < n$
we denote for any polynomial $p \in \k[x_1, \ldots, x_n]$ its $\mu$-degree $\deg_{\mu}(p)$ 
by $\deg_{x_{m+1}, \ldots, x_{n}}(p)$. Moreover, we denote for an endomorphism
$f = (f_1, \ldots, f_n) \in \End(\AA^n)$
\[
	\deg_{x_{m+1}, \ldots, x_{n}}(f) = \max_{j\in \{1, \ldots, n\}} \deg_{x_{m+1}, \ldots, x_{n}}(f_j) \, .
\]
If $m = 0$, then $\deg_{\mu}(f)$ is simply the classical degree that we denote by $\deg(f)$.
If $m > 0$, then $\deg_{\mu}(f)$ is in general not equal to $\deg_{x_{m+1}, \ldots, x_{n}}(f)$.
In fact, $\deg_{\mu}(f)$ is equal to $\deg_{x_{m+1}, \ldots, x_{n}}(f)$ in case
$\deg_{x_1, \ldots, x_m}(f_i) = 0$ for all $i \in \{1, \ldots, m\}$ and otherwise it is equal to $\infty$.

\begin{corollary}
	\label{cor.existence_ddeg}
	Let $f\in\End(\A^n)$ be an endomorphism. For each integer $m\in \{0,\ldots,n-1\}$, the sequence
	\[\deg_{x_{m+1},\ldots,x_n}(f^r)^{1/r}\]
	converges to a real number $\mu_m\ge 1$. This gives in particular the dynamical degree $\lambda(f)=\mu_0$, which satisfies $\lambda(f^d)=\lambda(f)^d$ for each $d\ge 1$.
\end{corollary}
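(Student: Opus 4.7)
The plan is to apply Fekete's subadditivity Lemma~\ref{lem.dynamical_degree_exists} to each of the sequences $a^{(m)}_r := \deg_{x_{m+1}, \ldots, x_n}(f^r)$, $r \ge 1$, for $m \in \{0, \ldots, n-1\}$ separately.

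First I verify the lower bound $a^{(m)}_r \ge 1$: assuming $f$ is dominant, so is $f^r$, hence $f^r$ cannot factor through the projection $\AA^n \to \AA^m$, and at least one coordinate $f^r_j$ must depend nontrivially on some of $x_{m+1}, \ldots, x_n$. Next I would establish the submultiplicativity $a^{(m)}_{r+s} \le a^{(m)}_r \cdot a^{(m)}_s$. For $m = 0$ this is the classical submultiplicativity of the total degree, obtained from $f^{r+s} = f^r \circ f^s$ together with the coordinatewise estimate $\deg(p(q_1, \ldots, q_n)) \le \deg(p) \cdot \max_i \deg(q_i)$. For $m \ge 1$, I would expand $f^{r+s}_j = f^r_j(f^s_1, \ldots, f^s_n)$ with $f^r_j = \sum_\alpha c_\alpha x^\alpha$; each substituted monomial $\prod_i (f^s_i)^{\alpha_i}$ has $\deg_{x_{m+1}, \ldots, x_n}$ bounded by $\sum_i \alpha_i \deg_{x_{m+1}, \ldots, x_n}(f^s_i) \le a^{(m)}_s \cdot \sum_i \alpha_i$, after which a careful handling of how $\sum_i \alpha_i$ couples to $a^{(m)}_r$ would close the argument.

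Once submultiplicativity is in hand, Lemma~\ref{lem.dynamical_degree_exists} yields convergence $\bigl(a^{(m)}_r\bigr)^{1/r} \to \mu_m := \inf_{r\ge 1} \bigl(a^{(m)}_r\bigr)^{1/r} \in \r_{\ge 1}$. In particular $\mu_0 = \lambda(f)$ by definition, since $a^{(0)}_r = \deg(f^r)$. For the identity $\lambda(f^d) = \lambda(f)^d$, I use $(f^d)^r = f^{dr}$ to write
\[
\lambda(f^d) = \lim_{r \to \infty} \deg(f^{dr})^{1/r} = \lim_{r \to \infty} \bigl(\deg(f^{dr})^{1/(dr)}\bigr)^d = \lambda(f)^d,
\]
where the last equality uses that $(\deg(f^{dr})^{1/(dr)})_r$ is a subsequence of the convergent sequence $(\deg(f^r)^{1/r})_r$ and hence has the same limit $\lambda(f)$.

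The main obstacle is the submultiplicativity step for $m \ge 1$: the naive substitution bound above yields only $a^{(m)}_{r+s} \le a^{(m)}_s \cdot \deg(f^r)$, which is generally weaker than $a^{(m)}_s \cdot a^{(m)}_r$ because $\deg(f^r)$ may grow faster than $a^{(m)}_r$. A cleaner route is to work first with $\mu$-weighted degrees $\deg_\mu$ for $\mu \in (\r_{> 0})^n$, for which the submultiplicativity $\deg_\mu(f^{r+s}) \le \deg_\mu(f^r) \cdot \deg_\mu(f^s)$ follows immediately by expanding monomials and using $\deg_\mu(g_i) \le \deg_\mu(g) \cdot \mu_i$ for every coordinate, and then to relate $\lim_r \deg_\mu(f^r)^{1/r}$ to $\mu_m$ through a limiting argument as the first $m$ components of $\mu$ tend to zero.
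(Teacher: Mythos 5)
Your overall strategy --- Fekete's Lemma applied to $a^{(m)}_r=\deg_{x_{m+1},\ldots,x_n}(f^r)$ --- is exactly the paper's route: the paper's proof consists of the single assertion $\deg_{x_{m+1},\ldots,x_n}(f^{r+s})\le \deg_{x_{m+1},\ldots,x_n}(f^r)\cdot\deg_{x_{m+1},\ldots,x_n}(f^s)$ followed by Lemma~\ref{lem.dynamical_degree_exists}. But your proposal never establishes this inequality for $m\ge 1$, and that inequality is the whole content of the statement; so as written your proof is incomplete exactly at the step you yourself call ``the main obstacle''. Your hesitation is in fact well founded: no amount of ``careful handling of how $\sum_i\alpha_i$ couples to $a^{(m)}_r$'' can close the gap in the stated generality, because the inequality fails for a general dominant endomorphism. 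For $f=(x_2,x_1^2)\in\End(\A^2)$ and $m=1$ one has $\deg_{x_2}(f)=1$, while $f^2=(x_1^2,x_2^2)$ gives $\deg_{x_2}(f^2)=2>1\cdot 1$. What makes the submultiplicativity true in the situations where the corollary is actually applied (Lemma~\ref{lem.DinhNGuyen}, where $f_1,\ldots,f_m\in\k[x_1,\ldots,x_m]$, and trivially when $m=0$) is that then $\deg_{x_{m+1},\ldots,x_n}((f^r)_i)=0$ for all $i\le m$ and all $r$; in your substitution estimate only the exponents $\alpha_i$ with $i>m$ then contribute, and $\sum_{i>m}\alpha_i\le \deg_{x_{m+1},\ldots,x_n}((f^r)_j)$, which yields precisely $a^{(m)}_{r+s}\le a^{(m)}_r\, a^{(m)}_s$. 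A complete write-up must therefore either invoke such a hypothesis on the first $m$ coordinates or argue convergence differently; it cannot be obtained by refining the naive bound.

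Your proposed fallback is also a dead end. For every $\mu\in(\r_{>0})^n$ one has $\frac{\mu_{\min}}{\mu_{\max}}\deg(f^r)\le\deg_\mu(f^r)\le\frac{\mu_{\max}}{\mu_{\min}}\deg(f^r)$, so $\lim_{r\to\infty}\deg_\mu(f^r)^{1/r}=\lambda(f)$ for every strictly positive $\mu$, independently of $\mu$. Hence letting the first $m$ entries of $\mu$ tend to $0$ can only ever produce $\lambda(f)$, which in general differs from $\mu_m$: for $f=(x_1^3,x_2^2)$ and $m=1$ one has $\mu_1=2$, while $\lim_{r\to\infty}\deg_{(\epsilon,1)}(f^r)^{1/r}=3=\lambda(f)$ for every $\epsilon>0$. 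The parts of your proposal that are fine are the lower bound $a^{(m)}_r\ge 1$ via dominance, the case $m=0$, and the deduction $\lambda(f^d)=\lambda(f)^d$ by the subsequence argument; but the heart of the corollary for $m\ge 1$ remains unproved in your attempt.
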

\begin{proof}
	This follows from Lemma~\ref{lem.dynamical_degree_exists}, as 
	\[\deg_{x_{m+1},\ldots,x_n}(f^{r+s})\le \deg_{x_{m+1},\ldots,x_n}(f^r)\cdot \deg_{x_{m+1},\ldots,x_n}(f^s),\] for all $r,s\ge 1$.
\end{proof}

\subsection{Basic properties of degree functions}\label{SubSec:DegreeFunctions}

Below we list several properties of degree functions (see Definition~\ref{Defi:degreeMu}). 
Apart from the easy observations  
$\deg_\mu|_{\k^*}=0$, $\deg_\mu(f\cdot g)=\deg_\mu(f) + \deg_\mu(g)$ and $\deg_\mu(f+g)\le \max(\deg_\mu(f),\deg_\mu(g))$, which correspond to say that $-\deg_\mu$ is a valuation (see Remark~\ref{RemVal}), we have:
\begin{remark}\label{Rem:DegreeMu} We fix $\mu=(\mu_1,\ldots,\mu_n)\in (\mathbb{R}_{\ge 0})^n\setminus \{0\}$ and get:
	\begin{enumerate}[leftmargin=*]
		\item\label{DecHom}
		As explained in Definition~\ref{Defi:degreeMu}, each polynomial $p\in \k[x_1,\ldots,x_n]\setminus \{0\}$ can be written uniquely as a finite sum 
		\[
			p=\sum\limits_{\theta\in \r_{\ge 0}} p_\theta
		\]
		where each $p_\theta\in \k[x_1,\ldots,x_n]$ is $\mu$-homogeneous of degree $\theta$. We then obtain $\deg_\mu(p)=\max \{\theta\mid p_\theta\not=0\}$. 
		\item\label{degmudeg}
		Let  $m \in \{0, \ldots, n-1 \}$ and assume that $\mu_i = 0$ for $i \leq m$, but $\mu_i > 0$ for $i > m$.
		Then we have for each polynomial $p\in \k[x_1,\ldots,x_n]\setminus \{0\}$
		\[
		\mu_{\textrm{min}} \cdot \deg_{x_{m+1}, \ldots, x_n}(p) 
		\leq \deg_{\mu}(p) \leq 
		\mu_{\textrm{max}} \cdot \deg_{x_{m+1}, \ldots, x_n}(p)
		\] 
		where $\mu_{\textrm{min}} = \min_{m+1 \leq i \leq n} \mu_i$ and 
		$\mu_{\textrm{max}} = \max_{m+1 \leq i \leq n} \mu_i$.
		In particular, for each dominant endomorphism $f \in \End(\AA^n)$ we have 
		\[
		\lim_{r \to \infty} \deg_{x_{m+1}, \ldots, x_n}(f^r)^{\frac{1}{r}} = 
		\lim_{r \to \infty} \max_{i \in \{1, \ldots, n\}}\deg_\mu((f^r)_i)^{\frac{1}{r}}
		\]
		where $(f^r)_i$ denotes the $i$-th coordinate function of $f^r$.
		Note that the left hand side is the dynamical degree $\lambda(f)$ in case $m = 0$, i.e.~when $\mu \in (\r_{>0})^n$.
	\end{enumerate}
\end{remark}

\subsection{Endomorphisms that preserve a linear projection}
The following is an algebraic analogue of the application of \cite[Theorem 1.1]{DiNg2011Comparison-of-dyna} to endomorphisms of $\A^n$ that preserve a linear projection:
\begin{lemma}
\label{lem.DinhNGuyen}
Let $f =(f_1, \ldots, f_n)\in \End(\A^n)$ be a dominant endomorphism. For each $r\ge 1$, we write
\[
	f^r=((f^r)_1,\ldots,(f^r)_n) \, .
\]
Let $m\in \{0,\ldots,n-1\}$ be such that 
$f_1,\ldots,f_m\in \k[x_1,\ldots,x_m]$. Then, the dynamical degree of $f$ is given by $\lambda(f)=\max\{\lambda_1,\lambda_2\}$, where
\[
	\begin{array}{rcl}
		\lambda_1&=&\lim\limits_{r\to \infty} \max\{\deg((f^r)_1),\ldots, \deg((f^r)_m)\}^{1/r}
		= \lambda((f_1, \ldots, f_m)) \\
		\lambda_2&=&\lim\limits_{r\to \infty} \max\{\deg_{x_{m+1},\ldots,x_n}((f^r)_{m+1}),\ldots, \deg_{x_{m+1},\ldots,x_n}((f^r)_n)\}^{1/r} \\
		&=& \lim\limits_{r\to \infty}\deg_{x_{m+1},\ldots,x_n}(f^r)^{1/r}
	\end{array}
\]
are two limits which exist. $($If $m = 0$, by convention we set $\lambda_1 = 1.)$
\end{lemma}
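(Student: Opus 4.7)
My plan is to prove the equality $\lambda(f)=\max\{\lambda_1,\lambda_2\}$ by establishing the two inequalities separately, with the non-trivial direction coming from a single submultiplicative estimate. I focus on the case $m\geq 1$; the case $m=0$ is trivial, as then $\lambda_2=\lambda(f)$ by definition and $\lambda_1=1$ by convention.

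First, I would verify that the two limits exist. The endomorphism $g=(f_1,\ldots,f_m)\in \End(\AA^m)$ is well-defined, and an easy induction on $r$ shows that $(f^r)_j=(g^r)_j\in \k[x_1,\ldots,x_m]$ for every $j\leq m$: in the inductive step, $(f^{r+1})_j=f_j((f^r)_1,\ldots,(f^r)_n)$ depends only on the first $m$ coordinates of $f^r$ since $f_j\in\k[x_1,\ldots,x_m]$. Setting $A_r:=\max_{j\leq m}\deg((f^r)_j)=\deg(g^r)$ and $B_r:=\max_{j>m}\deg_{x_{m+1},\ldots,x_n}((f^r)_j)$, Corollary~\ref{cor.existence_ddeg} then gives $A_r^{1/r}\to\lambda_1=\lambda(g)$ and $B_r^{1/r}\to\lambda_2$. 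The easy inequality $\lambda(f)\geq\max\{\lambda_1,\lambda_2\}$ is immediate from $\deg(f^r)\geq A_r$ and $\deg(f^r)\geq B_r$ after taking $r$-th roots.

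The core step is the estimate
\[
    \deg(f^{r+s})\;\leq\; A_r\deg(f^s)+B_s\deg(f^r) \qquad \text{for all } r,s\geq 1,
\]
which I would obtain from $f^{r+s}=f^s\circ f^r$ by treating the two groups of coordinates of $f^s$ separately. For $j\leq m$, since $(f^s)_j\in\k[x_1,\ldots,x_m]$, the composition only sees the first $m$ coordinates of $f^r$ and one gets $\deg((f^{r+s})_j)\leq \deg((f^s)_j)\cdot A_r\leq \deg(f^s)\cdot A_r$. For $j>m$ I would expand $(f^s)_j=\sum_{\alpha,\beta} c_{\alpha,\beta}\,x^{\alpha}(x')^{\beta}$ with $x=(x_1,\ldots,x_m)$ and $x'=(x_{m+1},\ldots,x_n)$; every non-zero monomial satisfies $|\alpha|+|\beta|\leq \deg(f^s)$ and $|\beta|\leq B_s$, and the substituted monomial has degree at most $|\alpha|A_r+|\beta|\deg(f^r)$. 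Using $A_r\leq \deg(f^r)$ to check that the constrained maximum is attained at $|\beta|=B_s$, one obtains the bound $\deg(f^s)A_r+B_s\deg(f^r)$.

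Finally, I would specialise the estimate to $s=r$, which gives $\deg(f^{2r})\leq (A_r+B_r)\deg(f^r)$. Taking $(2r)$-th roots and letting $r\to\infty$, the left-hand side converges to $\lambda(f)$, while $(A_r+B_r)^{1/r}\to\max\{\lambda_1,\lambda_2\}$ (by a squeeze between $\max(A_r,B_r)^{1/r}$ and $2^{1/r}\max(A_r,B_r)^{1/r}$) and $\deg(f^r)^{1/(2r)}\to\sqrt{\lambda(f)}$, so $\lambda(f)\leq\sqrt{\max\{\lambda_1,\lambda_2\}\cdot\lambda(f)}$, i.e.\ $\lambda(f)\leq\max\{\lambda_1,\lambda_2\}$. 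The main obstacle is the bookkeeping in the displayed estimate: the trivial bound $\deg(f^{r+s})\leq \deg(f^s)\deg(f^r)$ is not sharp enough, and one must genuinely exploit both the total-degree bound and the $x'$-degree bound on the monomials of $(f^s)_j$ in order to obtain a bound that is affine in each of $A_r$, $B_s$, $\deg(f^r)$ and $\deg(f^s)$.
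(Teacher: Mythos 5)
Your proof is correct and follows essentially the same route as the paper's: the same key estimate, obtained by splitting each monomial of $(f^s)_j$ according to the two groups of variables and using both the total-degree bound and the $\deg_{x_{m+1},\ldots,x_n}$-bound, followed by the specialisation $s=r$ and passage to the limit. The only (harmless) difference is that you bound the full degree $\deg(f^{r+s})$ rather than only the degrees of the last $n-m$ coordinates, which lets you skip the paper's preliminary reduction to the case $\lambda(f)>\lambda_1$ (needed there to guarantee that $\lim_{r\to\infty} b_r^{1/r}$ exists and equals $\lambda(f)$).
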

\begin{proof}
For each $r\ge 1$, we write
\[
\begin{array}{rcl}
a_r&=& \max\{\deg((f^r)_1),\ldots, \deg((f^r)_m)\}\\
b_r&=& \max\{\deg((f^r)_{m+1}),\ldots, \deg((f^r)_{n})\}\\
c_r&=& \max\{\deg_{x_{m+1},\ldots,x_n}((f^r)_{m+1}),\ldots, \deg_{x_{m+1},\ldots,x_n}((f^r)_{n})\} \\
	&=& \deg_{x_{m+1}, \ldots, x_n}(f^r).
\end{array}
\]
As $b_r\ge c_r$, we obtain for each $r \geq 1$
\[\deg(f^r)=\max\{a_r,b_r\}\ge \max\{a_r,c_r\}.\]
It follows from Corollary~$\ref{cor.existence_ddeg}$ that the limits
\[
\lambda_1=\lim\limits_{r\to \infty} a_r^{1/r} \, , \ \lambda_2=\lim\limits_{r\to \infty} c_r^{1/r}
\quad \textrm{and} \quad 
\lambda(f)=\lim\limits_{r\to \infty} \deg(f^r)^{1/r}
\]
exist (and all belong to $\r_{\ge 1}$). 
We obtain 
\[
	\lambda(f)=\lim\limits_{r\to \infty} \max\{ a_r^{1/r}, b_r^{1/r}\} \ge \lim\limits_{r\to \infty} \max\{ a_r^{1/r}, c_r^{1/r}\}= \max\left\{\lambda_1,\lambda_2\right\} \, .
\]
We may thus assume that $\lambda(f)>\lambda_1$, which implies that $\lim_{r\to \infty} b_r^{1/r}$ exists, and is equal to $\lambda(f)$. It remains to see that in this case $\lambda(f)\le \max \{\lambda_1,\lambda_2\}$.

For all $r,s \ge 1$ and each $i\in \{m+1,\ldots,n\}$, the polynomial $(f^{r+s})_i$ is obtained by replacing $x_1,\ldots,x_n$ with $(f^{r})_1,\ldots,(f^r)_n$ in $(f^s)_i$, so the degree of $(f^{r+s})_i$ is at most
\begin{align*}
	&\deg_{x_1,\ldots,x_m}((f^s)_i)\cdot \deg((f^r)_1,\ldots,(f^r)_m) \\
	&+ \deg_{x_{m+1},\ldots,x_n}((f^s)_i)\cdot \deg((f^r)_{m+1},\ldots,(f^r)_n) \, .
\end{align*}
This gives $b_{r+s}\le b_s \cdot a_{r}+c_s\cdot b_{r}$. When we choose then $s=r$, we obtain
\[
	b_{2r}\le b_r \cdot (a_{r}+c_r) \, .
\]
As  $\lambda(f)=\lim\limits_{r\to \infty} b_{2r}^{1/2r}$, we have 
$\lambda(f)^2=\lim\limits_{r\to \infty} b_{2r}^{1/r}$. 
The above inequality gives

\[
	\begin{array}{rcl}
 		\lambda(f)^2&=&\lim\limits_{r\to \infty} b_{2r}^{1/r}\\
		&\le&\lim\limits_{r\to \infty} b_r^{1/r}\cdot \limsup\limits_{r\to \infty}(a_{r}+c_r)^{1/r}\\
		&\le&\lambda(f)\cdot \limsup\limits_{r\to \infty} (2 \max \{a_r, c_r\})^{1/r} \\
 		&=&\lambda(f)\cdot \max\{\lambda_1,\lambda_2\},
 	\end{array}
 \]
 so $\lambda(f)\le \max \{\lambda_1,\lambda_2\}$.
\end{proof}

\begin{corollary}\label{Cor:Jung}
Let $n\ge 2$ and let $f\in \Aut(\A^n)$ be an automorphism such that $f_1,\ldots,f_{n-2}\in \k[x_1,\ldots,x_{n-2}]$ and such that the dynamical degree of $g=(f_1,\ldots,f_{n-2})\in \Aut(\A^{n-2})$ is an integer. Then, the dynamical degree of $f$ is an integer.
\end{corollary}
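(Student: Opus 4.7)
The plan is to combine Lemma~\ref{lem.DinhNGuyen} with the Jung--van der Kulk theorem applied to the relative automorphism over the function field $K := \k(x_1, \ldots, x_{n-2})$.

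First, I will apply Lemma~\ref{lem.DinhNGuyen} with $m = n-2$, which is allowed since $f_1, \ldots, f_{n-2} \in \k[x_1, \ldots, x_{n-2}]$. As the first $n-2$ components of $f^r$ agree with those of $g^r$, this yields $\lambda(f) = \max\{\lambda_1, \lambda_2\}$ with $\lambda_1 = \lambda(g) \in \ZZ_{\geq 1}$ by hypothesis and $\lambda_2 = \lim_{r \to \infty} \deg_{x_{n-1}, x_n}(f^r)^{1/r}$. It therefore suffices to prove that $\lambda_2$ is a positive integer.

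For this I will view $F := (f_{n-1}, f_n)$ as an element of $\Aut_K(\AA^2_K)$: the Jacobian of $f$ is block lower triangular with constant determinant $\det \Jac(g) \cdot \det \Jac_{x_{n-1}, x_n}(F)$, forcing both factors to be units. A direct induction on $r$ then gives in $\Aut_K(\AA^2_K)$ the twisted composition formula
\[
	((f^r)_{n-1}, (f^r)_n) \;=\; (g^{r-1})^*F \,\circ\, (g^{r-2})^*F \,\circ\, \cdots \,\circ\, g^*F \,\circ\, F,
\]
where $(g^i)^*$ denotes pullback of $K$-coefficients by $g^i$, and each factor has $K$-degree equal to $\deg_K(F)$. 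By the Jung--van der Kulk theorem $\Aut_K(\AA^2_K)$ is the amalgamated product $A_K *_{B_K} T_K$ of its affine and triangular subgroups, and by Corollary~\ref{Cor:Jung2} applied over $K$, after conjugating $F$ inside $\Aut_K(\AA^2_K)$ one may assume $F$ is cyclically reduced, so that $\deg_K(F^s) = \deg_K(F)^s$ for every $s$. The crucial point is that the pullback $(g^i)^*$ is a group automorphism of $\Aut_K(\AA^2_K)$ preserving each of $A_K$, $T_K$ and $B_K$, since it acts only on scalar coefficients; consequently each $(g^i)^*F$ inherits the same reduced normal form as $F$, and the twisted composition displayed above is itself a reduced word, whose $K$-degree by the standard amalgamated-product formula equals $\deg_K(F)^r$. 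Taking $r$-th roots and letting $r \to \infty$ yields $\lambda_2 = \deg_K(F) \in \ZZ_{\geq 1}$, so that $\lambda(f)$ is an integer.

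The main obstacle is that the Jung--van der Kulk conjugation of $F$ lives in $\Aut_K(\AA^2_K)$ with coefficients in the rational function field $K$, and does not in general come from a polynomial conjugation of $f$ in $\Aut(\AA^n)$; so one cannot simply replace $f$ by a conjugate. To circumvent this, I will analyse how replacing $F$ by its cyclically reduced conjugate $\alpha^{-1} F \alpha$ affects the displayed twisted composition: it introduces interleaved ``shift commutator'' terms $(g^i)^*(\alpha^{-1}) \cdot (g^{i-1})^*\alpha$ together with boundary factors $\alpha$ and $(g^{r-1})^*\alpha^{-1}$, each of $K$-degree uniformly bounded in $r$. A careful amalgamated-product length estimate then shows that these corrections do not alter the exponential growth rate of $\deg_{x_{n-1},x_n}(f^r)$, so $\lambda_2$ is unchanged by the conjugation and equals the integer $\deg_K(F)$.
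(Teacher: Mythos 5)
Your first steps coincide with the paper's proof: Lemma~\ref{lem.DinhNGuyen} with $m=n-2$, and the identification of $((f^r)_{n-1},(f^r)_n)$ with the twisted composition $(g^{r-1})^*F\circ\cdots\circ g^*F\circ F$ in $\Aut_K(\A^2)$, followed by Jung--van der Kulk and multiplicativity of degrees for reduced words. The gap is in the reduction to a cyclically reduced $F$: you use an \emph{ordinary} conjugation $F\mapsto \alpha^{-1}F\alpha$, and the final paragraph's claim that the resulting $r-1$ interleaved corrections $(g^{i})^*\alpha\circ(g^{i-1})^*\alpha^{-1}$ ``do not alter the exponential growth rate'' is not only unproven (their degrees are bounded, but their number grows with $r$, so no naive estimate controls the rate) --- it is false: $\lambda_2$ is \emph{not} invariant under ordinary $K$-conjugation of $F$. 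Take $n=3$, $g=x_1+1$, $T=(x_2+x_1x_3^2,\,x_3)$, $A=(x_3,x_2)$ and $F=T\circ A\circ T^{-1}$, i.e.
\[
	f=\bigl(x_1+1,\;x_3+x_1(x_2-x_1x_3^2)^2,\;x_2-x_1x_3^2\bigr)\in\Aut(\A^3)\,.
\]
Here $F$ is $K$-conjugate to the affine swap $A$, a cyclically reduced element of degree $1$, so your procedure outputs $\lambda_2=1$. But the twisted composition equals
\[
	(g^{r-1})^*T\cdot A\cdot(x_2-x_3^2,x_3)\cdot A\cdots(x_2-x_3^2,x_3)\cdot A\cdot T^{-1}\,,
\]
because the junctions $(g^{i})^*T^{-1}\circ(g^{i-1})^*T=(x_2-x_3^2,x_3)$ do \emph{not} cancel --- precisely since the twist changes the coefficient $x_1$; this is a reduced word of degree $2^{r+1}$, so $\lambda_2=2$ (and indeed $\lambda(f)=2$, so the corollary itself is not contradicted, only your reduction step).

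What makes the argument work --- and this is exactly the paper's device --- is to conjugate in the \emph{twisted} sense, $F\mapsto \beta^{g}\circ F\circ\beta^{-1}$ (i.e.\ conjugation in the semidirect product with the shift, which is also what an honest conjugation of $f$ induces on $F$): then $G_r=F^{g^{r-1}}\circ\cdots\circ F$ is replaced by $\beta^{g^{r}}\circ G_r\circ\beta^{-1}$, so only two boundary factors of bounded degree appear, the sandwich inequality shows $\lambda_2$ is genuinely preserved, and iterating with $\beta$ an extremal factor of the Jung--van der Kulk word brings $F$ to a form whose twisted products are reduced, giving $\lambda_2=\deg_K(F)\in\ZZ_{\geq 1}$. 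Two smaller points: your justification that $F\in\Aut_K(\A^2)$ via the constant Jacobian is not valid over an arbitrary field (the paper works in any characteristic); argue instead that $\k[x_1,\ldots,x_{n-2},f_{n-1},f_n]=\k[x_1,\ldots,x_n]$, hence $K[f_{n-1},f_n]=K[x_{n-1},x_n]$. Also, the existence of a cyclically reduced conjugate is a fact about the amalgamated product structure, not a consequence of Corollary~\ref{Cor:Jung2}, which is itself deduced from the statement you are proving and would make the argument circular.
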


\begin{proof}
By Lemma~\ref{lem.DinhNGuyen}, one has $\lambda(f)=\max\{\lambda(g),\lambda_2\}$, where 
\[ 
 	\lambda_2=\lim\limits_{r\to \infty} \max\{\deg_{x_{n-1},x_n}((f^r)_{n-1}), \deg_{x_{n-1},x_n}((f^r)_n)\}^{1/r}
 	\, .
 \]
 It remains to see that $\lambda_2$ is an integer.
 As $\k[x_1,\ldots,x_{n-2},f_{n-1},f_{n}]=\k[x_1,\ldots,x_{n}]$, one has $K[f_{n-1},f_{n}]=K[x_{n-1},x_n]$, where $K=\k(x_1,\ldots,x_{n-2})$. Hence, one can see the automorphism
 $(x_1,\ldots,x_n)\mapsto (x_1,\ldots,x_{n-1},f_{n-1},f_{n})$ of $\AA^n$ as an automorphism $F\in \Aut_K(\A^2)$ of $\A^2$ defined over $K$. 
For each $i \geq 0$, the automorphism
 $g^{-i} \circ (x_1,\ldots,x_{n-1},f_{n-1},f_{n}) \circ g^i$ of $\AA^n$ 
 can be seen as an element of $\Aut_K(\AA^2)$ that we denote by $F^{g^i}$
 where we identify $g$ with the automorphism $(f_1, \ldots, f_{n-2}, x_{n-1}, x_n) \in \Aut(\AA^n)$.
 This gives
 \[
 	\max\{\deg_{x_{n-1},x_n}((f^r)_{n-1}), \deg_{x_{n-1},x_n}((f^r)_n)\}=\deg (G_r)
 \]
 where $G_r=F^{g^{r-1}} \circ \cdots \circ  F^{g^2}  \circ F^{g} \circ F\in \Aut_K(\A^2)$, 
 since $G_r = g^{-r} \circ f^r$
 when we consider $G_r$, $g$ and $f$ as automorphisms of $\AA^n$.
 
  According to the Jung-van der Kulk Theorem \cite{Ju1942Uber-ganze-biratio,Ku1953On-polynomial-ring}, one can write $F=F_1\circ \cdots \circ F_s$ where each $F_i\in \Aut_K(\A^2)$ is either triangular or affine. One can moreover assume that two consecutive $F_i$ are not both affine or both triangular (as otherwise one may reduce the description), and get then $\deg(F)=\prod_{i=1}^s \deg(F_i)$ (follows by looking at what happens at infinity or by \cite[Lemma 5.1.2]{Es2000Polynomial-automor}). 
  We prove that $\lambda_2$ is an integer by induction on $s$. If $s=1$, then $F$ is either affine or triangular; this implies that the set $\{\deg(G_r)\mid {r\ge 1}\}$ is bounded, so $\lambda_2=1$. If $s>1$ 
  and $F_1$, $F_s$ are both affine or both triangular, we replace $F$ with $(F_1)^g \circ F\circ F_1^{-1}$. This replaces $G_r=F^{g^{r-1}} \circ \cdots \circ  F^{g} \circ F$ with
  $\tilde{G}_r=(F_1)^{g^r}\circ G_r\circ  F_1^{-1}$. 
  As $\deg((F_1)^{g^r})=\deg(F_1)$ for each $r\ge 1$, one has  
  \[
  	\frac{1}{\deg(F_1)^2}\deg(G_r)\le \deg(\tilde{G}_r)\le \deg(G_r)\cdot \deg(F_1)^2 \, , 
  \]
  so this replacement does not change the value of $\lambda_2$. As this decreases the value of $s$, we may assume that $F_1$ and $F_s$ are not both triangular or affine. Hence, for each $r\ge 1$, $G_r$ is a product of $rs$ elements that are affine or triangular, 
  with no two consecutive in the same group. This gives $\deg(G_r)=\prod_{i=0}^{r-1} \prod_{j=1}^s \deg(F_j^{g^i})=\prod_{i=0}^{r-1} \prod_{j=1}^s \deg(F_j)=\deg(F)^r$. 
  Hence, $\lambda_2=\deg(F)$ is an integer.
\end{proof}

\begin{corollary}\label{Cor:Jung2}
The dynamical degree of any element of $\Aut(\A^2)$ is an integer. Similarly, the dynamical degree of any element of $\Aut(\A^3)$ $($respectively $\Aut(\A^4))$ which preserves the set of fibres of a linear projection $\A^3\to \A^1$ or $\A^3\to \A^2$ $($respectively $\A^4\to \A^2)$ is an integer.
\end{corollary}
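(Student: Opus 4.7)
The plan is to reduce each of the four cases to Corollary~\ref{Cor:Jung}, or, for the one case where the codimension of the base is one, to Lemma~\ref{lem.DinhNGuyen} directly. Since dynamical degrees are invariant under conjugation by affine automorphisms, we may first apply a linear change of coordinates to bring the given linear projection $\A^n\to\A^m$ into the standard form $(x_1,\ldots,x_n)\mapsto(x_1,\ldots,x_m)$; the hypothesis then becomes $f_1,\ldots,f_m\in\k[x_1,\ldots,x_m]$ with $(f_1,\ldots,f_m)\in\Aut(\A^m)$.

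For $f\in\Aut(\A^2)$, Corollary~\ref{Cor:Jung} applies with $n=2$: the condition on $f_1,\ldots,f_{n-2}$ is empty and $g$ is trivial, with dynamical degree $1$ (matching the convention $\lambda_1=1$ for $m=0$ in Lemma~\ref{lem.DinhNGuyen}). For $f\in\Aut(\A^3)$ preserving the fibres of $\A^3\to\A^1$, after conjugation $f_1\in\k[x_1]$, and Corollary~\ref{Cor:Jung} with $n=3$ applies to $g=(f_1)\in\Aut(\A^1)$, for which $\lambda(g)=\deg(g)=1$. For $f\in\Aut(\A^4)$ preserving the fibres of $\A^4\to\A^2$, conjugation gives $(f_1,f_2)\in\Aut(\A^2)$, whose dynamical degree is an integer by the first case, so Corollary~\ref{Cor:Jung} with $n=4$ yields $\lambda(f)\in\ZZ$.

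Only the case $f\in\Aut(\A^3)$ preserving $\A^3\to\A^2$ requires a bit more work, since here the codimension of the base is $1$ rather than $2$, so Corollary~\ref{Cor:Jung} does not apply directly. After conjugation, $g:=(f_1,f_2)\in\Aut(\A^2)$ and $f_3\in\k[x_1,x_2,x_3]$. We apply Lemma~\ref{lem.DinhNGuyen} with $m=2$: $\lambda(f)=\max\{\lambda_1,\lambda_2\}$, where $\lambda_1=\lambda(g)\in\ZZ$ by the first case, and $\lambda_2=\lim_{r\to\infty}\deg_{x_3}((f^r)_3)^{1/r}$. Writing $f_3=\sum_{i=0}^d c_i(x_1,x_2)\,x_3^i$ with $d=\deg_{x_3}(f_3)$ and $c_d\neq 0$, the identity $(f^r)_3=\sum_i c_i(g^{r-1})\cdot((f^{r-1})_3)^{\,i}$ shows that the leading coefficient in $x_3$ of $(f^r)_3$ equals $c_d(g^{r-1})\cdot L_{r-1}^d$, where $L_{r-1}$ denotes the leading coefficient in $x_3$ of $(f^{r-1})_3$. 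Since $g^{r-1}\in\Aut(\A^2)$, the substitution map $c\mapsto c(g^{r-1})$ is injective on $\k[x_1,x_2]$, so this coefficient is nonzero, and an easy induction yields $\deg_{x_3}((f^r)_3)=d^r$ and hence $\lambda_2=d\in\ZZ$.

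The main technical point is the last step: one must verify that the leading coefficient in $x_3$ of $(f^r)_3$ does not collapse at any iteration. This is the only place where the fact that $(f_1,f_2)$ is an automorphism (and not merely an endomorphism) of $\A^2$ is essential; without that, one would only obtain the upper bound $\lambda_2\le d$.
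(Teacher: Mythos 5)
Your proof is correct and follows essentially the same route as the paper: conjugate the projection into standard form and then invoke Corollary~\ref{Cor:Jung} for $\Aut(\A^2)$, for $\A^3\to\A^1$ and for $\A^4\to\A^2$, and Lemma~\ref{lem.DinhNGuyen} for $\A^3\to\A^2$. The only addition is your explicit leading-coefficient induction showing $\deg_{x_3}((f^r)_3)=d^r$, a detail the paper leaves implicit (note that since $f\in\Aut(\A^3)$ and $(f_1,f_2)\in\Aut(\A^2)$, one in fact has $d=1$, and injectivity of the substitution $c\mapsto c(g^{r-1})$ only needs $g$ dominant, so your closing remark slightly overstates where the automorphism hypothesis is used).
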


\begin{proof}
The fact that the dynamical degree of any element of $\Aut(\A^2)$ is an integer follows from Corollary~\ref{Cor:Jung} applied to $n=2$. If $f\in \Aut(\A^3)$ is an automorphism that preserves the 
set of fibres of a linear projection $\A^3\to \A^1$ or $\A^3\to \A^2$, then one may conjugate by an element of $\GL_3$ and obtain $f=(f_1,f_2,f_3)$ with either $f_1\in \k[x_1]$ or $f_1,f_2\in \k[x_1,x_2]$. The fact that $\lambda(f)$ is an integer follows then from Corollary~\ref{Cor:Jung} and Lemma~\ref{lem.DinhNGuyen}, respectively (in the second case, one uses the fact that the dynamical degree of $(f_1, f_2) \in \Aut(\AA^2)$ is an integer). Similarly, in the case of an automorphism of $\A^4$ preserving a linear projection $\A^4\to \A^2$, one restricts to the case $f=(f_1,\ldots,f_4)\in \Aut(\A^4)$ with $f_1,f_2\in \k[x_1,x_2]$, and applies Corollary~\ref{Cor:Jung}.
\end{proof}

\subsection{Homogeneous endomorphisms}\label{SubSec:HomEndo}

\begin{lemma}
\label{Lemm:DegreeEndhom}
Let $h = (h_1, \ldots, h_n) \in \End(\A^n)$, 
let $\mu=(\mu_1,\ldots,\mu_n)\in (\mathbb{R}_{\ge 0})^n\setminus \{0\}$ and let $\theta\in \r_{\ge 0}$. The following conditions are equivalent:
\begin{enumerate}[leftmargin=*]
\item\label{Endmuhom1}
The polynomial $h_i$ is $\mu$-homogeneous of degree $\theta\mu_i$ for each $i\in \{1,\dots,n\}$.
\item\label{Endmuhom2}
For each $\mu$-homogeneous polynomial $p$ of degree $\xi$ and each integer $r\ge 1$, the polynomial $p \circ h^r$ is $\mu$-homogeneous of degree $\theta^r \xi$.
\end{enumerate}
If additionally $h_i \neq 0$ for each $i \in \{1, \ldots, n\}$, then~\ref{Endmuhom1} and~\ref{Endmuhom2}
are equivalent to
\begin{enumerate}[leftmargin=*]
	\setcounter{enumi}{2}
	\item\label{Endmuhom3}  
	For each Matrix $M$ contained in $h$, $\mu$ is an eigenvector to the eigenvalue $\theta$.
\end{enumerate}
\end{lemma}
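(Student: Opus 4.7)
The plan is to prove the chain $(1)\Leftrightarrow(2)$ without any additional hypothesis, and then, under the assumption $h_i\neq 0$ for all $i$, to prove $(1)\Leftrightarrow(3)$. Each step is a direct unpacking of the definition of $\mu$-homogeneity (plus the facts that $\deg_\mu$ is additive on products and sub-additive on sums), so no deep input is required.

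For $(1)\Rightarrow(2)$ I would argue by induction on $r$. For the base case $r=1$, write the $\mu$-homogeneous polynomial $p$ of degree $\xi$ as a sum of monomials $c_\alpha x_1^{\alpha_1}\cdots x_n^{\alpha_n}$ with $c_\alpha\neq 0\Rightarrow\sum_j\alpha_j\mu_j=\xi$. Then $p\circ h$ is the corresponding sum of products $c_\alpha h_1^{\alpha_1}\cdots h_n^{\alpha_n}$; by (1) and the additivity of $\deg_\mu$ on products, each such product is $\mu$-homogeneous of degree $\sum_j\alpha_j\theta\mu_j=\theta\xi$, and the sum therefore is as well. The inductive step follows from $p\circ h^r=(p\circ h^{r-1})\circ h$: by the inductive hypothesis $p\circ h^{r-1}$ is $\mu$-homogeneous of degree $\theta^{r-1}\xi$, and applying the base case to this polynomial composed with $h$ yields that $p\circ h^r$ is $\mu$-homogeneous of degree $\theta\cdot\theta^{r-1}\xi=\theta^r\xi$. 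For $(2)\Rightarrow(1)$, one takes $p=x_i$ (which is $\mu$-homogeneous of degree $\mu_i$) and $r=1$: then $p\circ h=h_i$ is $\mu$-homogeneous of degree $\theta\mu_i$ by (2).

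For the equivalence with (3) under the hypothesis that each $h_i$ is nonzero: a matrix $M=(m_{ij})$ contained in $h$ amounts, by definition, to a choice, for each $i$, of a monomial $x_1^{m_{i,1}}\cdots x_n^{m_{i,n}}$ appearing with nonzero coefficient in $h_i$; the $\mu$-degree of this monomial is precisely $(M\mu)_i=\sum_j m_{ij}\mu_j$, so the condition $M\mu=\theta\mu$ translates to $(M\mu)_i=\theta\mu_i$ for each $i$. The direction $(1)\Rightarrow(3)$ is then immediate, because (1) asserts that every monomial appearing in $h_i$ has $\mu$-degree equal to $\theta\mu_i$. For $(3)\Rightarrow(1)$, given any index $i$ and any exponent $\alpha$ appearing with nonzero coefficient in $h_i$, I build a matrix $M$ contained in $h$ by setting its $i$-th row to $\alpha$ and, for each $k\neq i$, setting its $k$-th row to be any exponent support of $h_k$ (which exists because $h_k\neq 0$); condition (3) applied to this $M$ forces $\sum_j\alpha_j\mu_j=\theta\mu_i$. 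Since $i$ and $\alpha$ were arbitrary, every monomial of $h_i$ has $\mu$-degree $\theta\mu_i$, i.e.~(1) holds.

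There is no substantive obstacle; the one thing to highlight is that the hypothesis $h_i\neq 0$ cannot be dropped for $(1)\Leftrightarrow(3)$, since if some $h_i$ vanishes then no matrix is contained in $h$, so (3) holds vacuously while (1) may fail. This also explains why the matrix-construction step in $(3)\Rightarrow(1)$ uses the nonvanishing of the other $h_k$'s in an essential way.
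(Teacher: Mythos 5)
Your proposal is correct and follows essentially the same route as the paper: $(2)\Rightarrow(1)$ via $p=x_i$, $(1)\Rightarrow(2)$ by reducing to $r=1$ and to monomials using multiplicativity of $\deg_\mu$, and $(1)\Leftrightarrow(3)$ read off directly from the definition of matrices contained in $h$ (which the paper dispatches in one line and you spell out, including the correct use of $h_k\neq 0$ to build the auxiliary matrix).
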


\begin{proof}The implication $\ref{Endmuhom2}\Rightarrow \ref{Endmuhom1}$ is given by choosing $p=x_i$ 
for $i = 1, \ldots, n$, so we may assume $ \ref{Endmuhom1}$ and prove \ref{Endmuhom2}. It suffices to prove \ref{Endmuhom2} for $r=1$, as the general result follows by induction. 

If $p=0$, then $h(p)=0$ is $\mu$-homogeneous of any degree. 
It then suffices to do the case where $p$ is a monomial: we write 
$p=\zeta x_1^{a_1}x_2^{a_2}\cdots x_n^{a_n}$ with $\zeta \in \k^*$, $a_1,\ldots,a_n\ge 0$, which is 
$\mu$-homogeneous of degree $\deg_\mu(p)=\sum_{i=1}^n a_i\mu_i$.
As $h_i$
is $\mu$-homogeneous of degree $\theta\mu_i$, 
the polynomial $p\circ h=\zeta h_1^{a_1}h_2^{a_2}\cdots h_n^{a_n}$
is $\mu$-homogeneous of 
degree $\sum_{i=1}^n a_i \theta \mu_i=\theta\deg_\mu(p)$.

Now, we assume additionally that $h_i \neq 0$ for each $i \in \{1, \ldots, n\}$.
The equivalence between~\ref{Endmuhom1} and~\ref{Endmuhom3} follows immediately 
from the definition of the $\mu$-degree.
\end{proof}

\begin{definition}
Let $\mu=(\mu_1,\ldots,\mu_n)\in (\mathbb{R}_{\ge 0})^n\setminus \{0\}$ and let $\theta\in \r_{\ge 0}$. We say that $h \in \End(\AA^n)$
 is \emph{$\mu$-homogeneous of degree $\theta$} if the conditions of Lemma~\ref{Lemm:DegreeEndhom} are satisfied.
\end{definition}

\begin{lemma}\label{Lem:HomDecEndtheta}
 Let $\mu=(\mu_1,\ldots,\mu_n)\in (\mathbb{R}_{\ge 0})^n\setminus \{0\}$. For each $f=(f_1,\ldots,f_n)\in \End(\A^n)$ and each $\theta\in \r_{\ge 0}$, the following are equivalent:
 \begin{enumerate}[leftmargin=*]
 \item\label{fissumhomtheta}
 We can write $f$ as a finite sum $f=\sum_{0\le \xi\le \theta} g_\xi$, where each $g_\xi\in  \End(\A^n)$ is $\mu$-homogeneous of degree~$\xi$.
\item\label{fboundeddegtheta2}
	$\deg_\mu(f)\le \theta$.
 \end{enumerate}
 \end{lemma}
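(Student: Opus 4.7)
The plan is to prove the two implications separately, using the componentwise $\mu$-homogeneous decomposition of polynomials from Remark~\ref{Rem:DegreeMu}\ref{DecHom} as the main tool, and unwinding the definition of $\mu$-homogeneity for endomorphisms (namely that $h = (h_1, \ldots, h_n)$ is $\mu$-homogeneous of degree $\xi$ iff each $h_i$ is $\mu$-homogeneous of degree $\xi \mu_i$).

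For the implication \ref{fissumhomtheta}$\Rightarrow$\ref{fboundeddegtheta2}, suppose $f = \sum_{0 \le \xi \le \theta} g_\xi$ with each $g_\xi$ $\mu$-homogeneous of degree $\xi$. Taking $i$-th components yields $f_i = \sum_{\xi} (g_\xi)_i$ where $(g_\xi)_i$ is $\mu$-homogeneous of degree $\xi \mu_i \leq \theta \mu_i$. Using that $-\deg_\mu$ is a valuation (see Remark~\ref{RemVal}), we get $\deg_\mu(f_i) \leq \max_\xi \deg_\mu((g_\xi)_i) \leq \theta \mu_i$, and the definition of $\deg_\mu(f)$ then gives $\deg_\mu(f) \leq \theta$.

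For the converse \ref{fboundeddegtheta2}$\Rightarrow$\ref{fissumhomtheta}, assume $\deg_\mu(f_i) \leq \theta \mu_i$ for every $i$. For each $i$, decompose $f_i = \sum_{\eta \in S_i} (f_i)_\eta$ into its $\mu$-homogeneous parts, where $S_i \subset \r_{\geq 0}$ is the finite set of degrees for which $(f_i)_\eta \neq 0$. I will now construct $g_\xi$ indexed by a finite set $T \subset [0,\theta]$ so that $(g_\xi)_i$ is $\mu$-homogeneous of degree $\xi \mu_i$ and $\sum_\xi (g_\xi)_i = f_i$. For each index $i$ with $\mu_i > 0$, every $\eta \in S_i$ satisfies $0 \le \eta \le \theta \mu_i$, so $\eta/\mu_i \in [0, \theta]$; set
\[
    T = \{0\} \cup \bigcup_{i : \mu_i > 0} \{\eta/\mu_i \mid \eta \in S_i\} \subset [0, \theta],
\]
and for $\xi \in T$ put $(g_\xi)_i := (f_i)_{\xi \mu_i}$ when $\mu_i > 0$. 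When $\mu_i = 0$, the hypothesis forces $\deg_\mu(f_i) \leq 0$, so $f_i$ is itself $\mu$-homogeneous of degree $0$; put $(g_0)_i := f_i$ and $(g_\xi)_i := 0$ for $\xi \in T \setminus \{0\}$. By construction each $g_\xi$ is $\mu$-homogeneous of degree $\xi \in [0,\theta]$, and the sum $\sum_{\xi \in T} (g_\xi)_i$ recovers $f_i$ for every $i$, so $f = \sum_{\xi \in T} g_\xi$, as required.

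The only mildly delicate point is the bookkeeping for the coordinates $i$ where $\mu_i = 0$: one might worry that forcing $(g_\xi)_i$ to have $\mu$-degree $\xi \mu_i = 0$ for all $\xi$ is too restrictive, but it is precisely the assumption $\deg_\mu(f) \le \theta$ that ensures $f_i$ already lives in the degree-$0$ piece, allowing the whole $f_i$ to be absorbed into $g_0$. Everything else is just the standard decomposition of a polynomial into $\mu$-homogeneous components, collected coherently across coordinates.
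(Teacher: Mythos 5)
Your proof is correct and follows essentially the same route as the paper: the forward direction by taking components and using the valuation property, and the converse by decomposing each $f_i$ into $\mu$-homogeneous parts, rescaling the degrees by $\mu_i$ when $\mu_i>0$, and absorbing the whole of $f_i$ (which the hypothesis forces to be $\mu$-homogeneous of degree $0$) into $g_0$ when $\mu_i=0$. Your explicit finite index set $T$ is just a bookkeeping variant of the paper's construction.
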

 
\begin{proof}
$\ref{fissumhomtheta}\Rightarrow\ref{fboundeddegtheta2}$: For each $i\in \{1,\ldots,n\}$, the polynomial $f_i$ is the sum of the $i$-th components of the endomorphisms $g_{\xi}$. As each of these polynomials has degree $\xi \mu_i\le \theta \mu_i$, the polynomial $f_i$ is of $\mu$-degree $\deg_\mu(f_i)\le \theta\mu_i$.

$\ref{fboundeddegtheta2}\Rightarrow\ref{fissumhomtheta}$: 
 As in Remark~\ref{Rem:DegreeMu}\ref{DecHom}, we write each $f_i$, $i\in \{1,\ldots,n\}$ as 
 $f_i=\sum_{0\le \kappa \le \theta\mu_i} p_{i,\kappa}$ where each $p_{i,\kappa}$ is  $\mu$-homogeneous of degree $\kappa$.

We define $g_0=(p_{1,0},\ldots,p_{n,0})\in \End(\A^n)$, which is $\mu$-homogeneous of degree $0$. 

For each $\xi\in \r$ with $0\le \xi \le \theta$, we define the $i$-th component $(g_\xi)_i$ of $g_\xi$ as follows: if $\mu_i=0$ and $\xi > 0$, then $(g_\xi)_i=0$ and otherwise, we choose $(g_\xi)_i=p_{i,\xi\mu_i}$. By construction, $g_\xi$ is $\mu$-homogeneous of degree $\xi$.

 Moreover, $f_i=\sum_{0\le \kappa \le \theta\mu_i} p_{i,\kappa}=\sum_{0\le \xi \le \theta} (g_{\xi})_i$ for each $i\in \{1,\ldots,n\}$ with $\mu_i>0$. If $\mu_i=0$, then $f_i=\sum_{0\le \kappa \le \theta\mu_i} p_{i,\kappa}=p_{i,0}=\sum_{0\le \xi \le \theta} (g_{\xi})_i$. This yields  
 $f=\sum_{0\le \xi\le \theta} g_\xi$.\qedhere
 \end{proof}

\begin{remark}
In the decomposition of Lemma~\ref{Lem:HomDecEndtheta}\ref{fissumhomtheta}, 
the $i$-th component of each $g_{\xi}$ is unique, if $\mu_i>0$, but is not unique if $\mu_i=0$.
 \end{remark}
\begin{example}
 We have  $\deg_{(1,\ldots,1)}(f)=\deg(f)$ and $\deg_\mu(\id_{\A^n})=1$ for each  $\mu\in  (\mathbb{R}_{\ge 0})^n\setminus \{0\}$. However, $\deg_{(2,3,0)}(x_1,x_2+x_1^2x_3,x_3)=\frac{4}{3}$.
 \end{example}
\begin{lemma}\label{Lem:HomDecEnd}
 Let $\mu=(\mu_1,\ldots,\mu_n)\in (\mathbb{R}_{\ge 0})^n\setminus \{0\}$. For each $f=(f_1,\ldots,f_n)\in \End(\A^n)$, the following are equivalent:
 \begin{enumerate}[leftmargin=*]
 \item\label{degffinite}
 $\deg_\mu(f)<\infty$.
 \item\label{fzerovariables}
 For each $i\in \{1,\ldots,n\}$ such that $\mu_i=0$, the element $f_i$ is a polynomial in the variables $\{x_j\mid j\in \{1,\ldots,n\},\mu_j=0\}$.
 \end{enumerate}
 In particular, if $\mu \in  (\mathbb{R}_{>0})^n$ then the above conditions hold.
 \end{lemma}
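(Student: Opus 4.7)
The plan is to unravel the definition of $\deg_\mu(f)$ from Definition~\ref{Defi:degmuendo}. I would partition the index set $\{1,\ldots,n\}$ into $I = \{i : \mu_i = 0\}$ and $J = \{i : \mu_i > 0\}$, noting that $J \neq \emptyset$ since $\mu \neq 0$. The key observation underlying both directions is that a nonzero monomial $c\cdot x_1^{a_1}\cdots x_n^{a_n}$ has $\mu$-degree $\sum_j a_j\mu_j$ equal to $0$ if and only if $a_j = 0$ for every $j \in J$, i.e.~the monomial belongs to $\k[x_j : j \in I]$.

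For the implication \ref{degffinite}$\Rightarrow$\ref{fzerovariables}, I would assume $\deg_\mu(f) = \theta < \infty$, so that $\deg_\mu(f_i) \leq \theta \mu_i$ for each $i$. For $i \in I$ this forces $\deg_\mu(f_i) \leq 0$, hence $f_i$ is either $0$ or a sum of monomials each of $\mu$-degree exactly $0$; by the observation above, every such monomial lies in $\k[x_j : j \in I]$, giving \ref{fzerovariables}.

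For the converse \ref{fzerovariables}$\Rightarrow$\ref{degffinite}, the hypothesis yields $\deg_\mu(f_i) = 0$ for each $i \in I$ (with the convention $\deg_{\mu}(0) = -\infty$), while for $i \in J$ the quantity $\deg_\mu(f_i)$ is automatically a finite nonnegative real, since $f_i$ is a finite sum of monomials. Setting $\theta = \max\{\deg_\mu(f_i)/\mu_i : i \in J\}$, which is well-defined because $J \neq \emptyset$, one verifies $\deg_\mu(f_i) \leq \theta \mu_i$ for every $i \in \{1,\ldots,n\}$ and concludes $\deg_\mu(f) \leq \theta < \infty$. The final ``in particular'' assertion is immediate, since $\mu \in (\RR_{>0})^n$ gives $I = \emptyset$, in which case condition \ref{fzerovariables} is vacuous.

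I do not anticipate any real obstacle: the statement is essentially a direct translation of the definitions. The only minor bookkeeping point is to handle the case $f_i = 0$ explicitly so that one does not inadvertently exclude it when passing between the estimate $\deg_\mu(f_i) \leq 0$ and the conclusion that $f_i \in \k[x_j : j \in I]$.
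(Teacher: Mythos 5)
Your argument is correct and is essentially the same as the paper's proof: both directions reduce to the observation that $\deg_\mu(f_i)\le\theta\mu_i$ forces $\deg_\mu(f_i)\le 0$ when $\mu_i=0$, and conversely one takes $\theta=\max\{\deg_\mu(f_i)/\mu_i \mid \mu_i>0\}$ to witness finiteness. The only difference is cosmetic bookkeeping (your explicit treatment of $f_i=0$), so there is nothing to add.
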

 
 \begin{proof}
$ \ref{degffinite}\Rightarrow \ref{fzerovariables}$: Suppose that $\theta=\deg_\mu(f)<\infty$. For each $i\in \{1,\ldots,n\}$, we get $\deg_\mu(f_i)\le \theta \mu_i$ (Definition~\ref{Defi:degmuendo}). If $\mu_i=0$, then $\deg_\mu(f_i)=0$, which means that  $f_i$ is a polynomial in the variables $\{x_j\mid j\in \{1,\ldots,n\},\mu_j=0\}$.

$ \ref{fzerovariables}\Rightarrow \ref{degffinite}$:   it follows from  \ref{fzerovariables} that $\deg_\mu(f_i) \leq 0$ for each $i\in \{1,\ldots,n\}$ such that $\mu_i=0$. This gives 
$\deg_\mu(f)=\max\left.\left\{ \deg_\mu(f_i) / \mu_i \, \right| \, \mu_i>0 \right\}<\infty$.
 \end{proof}

 \begin{lemma}\label{Lem:MaximalEigenVectorLeadingpartNotZero}
	Let $f=(f_1,\ldots,f_n)\in \End(\A^n)$ be a dominant endomorphism. For each maximal eigenvector $\mu$ of $f$, the $\mu$-leading part $g=(g_1, \ldots, g_n) \in \End(\AA^n)$ of $f$
	has the following properties:
\begin{enumerate}[leftmargin=*]
\item \label{Lem:MaximalEigenVectorLeadingpartNotZero_1}
The maximal eigenvalue $\theta$ of $f$ is such that $\deg_\mu(g)=\deg_\mu(f)=\theta < \infty$;
\item \label{Lem:MaximalEigenVectorLeadingpartNotZero_2}
For each $i\in \{1,\ldots,n\}$, the polynomial $g_i$ is non-constant.
\end{enumerate}
 \end{lemma}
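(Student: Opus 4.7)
The plan is to extract both properties almost directly from the definitions, with the only nontrivial input being a single use of dominance to force $\theta>0$.

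First I would establish (1). By the definition of a maximal eigenvector, $\deg_\mu(f_i)=\theta\mu_i$ for every $i$, and $\mu\neq 0$ provides an index with $\mu_i>0$. Plugging into the definition of $\deg_\mu(f)$ gives $\deg_\mu(f)=\theta<\infty$. For each $i$ the maximum defining $\deg_\mu(f_i)$ is attained by at least one monomial of $f_i$ (the relevant set is finite and non-empty since $f_i\neq 0$), and any such monomial survives in $g_i$, so $g_i\neq 0$ and $\deg_\mu(g_i)=\theta\mu_i$. The same computation as for $f$ then yields $\deg_\mu(g)=\theta$.

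For (2), the essential step is to prove $\theta>0$. If on the contrary $\theta=0$, then $\deg_\mu(f_i)=0$ for every $i$, which forces each $f_i$ to be a polynomial only in the variables $\{x_j:\mu_j=0\}$. Letting $S=\{j:\mu_j>0\}$, which is non-empty since $\mu\neq 0$, the endomorphism $f$ then factors through the projection $\AA^n\to\AA^{n-|S|}$ onto the coordinates not in $S$; the image of $f$ therefore has dimension at most $n-|S|<n$, contradicting dominance. This is the main obstacle and the place where dominance is really used (beyond its implicit role in guaranteeing that each $f_i$ is nonzero).

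With $\theta>0$ in hand I would conclude by a case split. If $\mu_i>0$, every monomial $c\prod_j x_j^{a_j}$ of $g_i$ satisfies $\sum_j a_j\mu_j=\theta\mu_i>0$, so some $a_j$ is positive and the monomial is non-constant; hence $g_i$ is non-constant. If $\mu_i=0$, then $\deg_\mu(f_i)=0$ implies that every monomial of $f_i$ already has $\mu$-degree $0=\theta\mu_i$, so $f_i$ equals its own $\mu$-homogeneous component of degree $0$, i.e.\ $g_i=f_i$; and $f_i$ is non-constant because dominance of $f$ forces $f_1,\ldots,f_n$ to be algebraically independent over $\k$, so no $f_i$ can lie in $\k$. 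Once $\theta>0$ is established this dichotomy is routine, so the only real work is the factorisation-through-a-projection argument above.
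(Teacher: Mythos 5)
Your proof is correct and takes essentially the same route as the paper: read $\deg_\mu(f)=\deg_\mu(g)=\theta<\infty$ off the definition of a maximal eigenvector, then treat the cases $\mu_i>0$ (every monomial of $g_i$ has positive $\mu$-degree $\theta\mu_i$, so $g_i$ is non-constant) and $\mu_i=0$ (where $g_i=f_i$, non-constant by dominance). The only difference is that you make explicit why $\theta>0$, via the factorisation through a linear projection; the paper asserts $\theta\mu_i>0$ without comment here, the corresponding one-line argument (dominance forces some $f_j$ to involve a variable $x_i$ with $\mu_i>0$, so $\deg_\mu(f_j)>0$) appearing only in the proof of Lemma~\ref{Lemma:Dynamicaldegreeestimate}.
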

 \begin{proof}
 	As $\mu=(\mu_1,\ldots,\mu_n)\in (\mathbb{R}_{\ge  0})^n$ is a maximal eigenvector of $f$, we have $\deg_\mu(f_i)= \theta \mu_i$ for each $i\in \{1,\ldots,n\}$, where $\theta$ is the maximal eigenvalue of $f$. This gives $\deg_\mu(f)=\theta < \infty$ and therefore
 	$\deg_\mu(g_i)=\theta \mu_i=\deg_\mu(f_i)$ for each $i\in \{1,\ldots,n\}$. Hence, we get~\ref{Lem:MaximalEigenVectorLeadingpartNotZero_1}.
 	In case $\mu_i > 0$, we have $\deg_\mu(g_i) = \theta \mu_i > 0$ 
 	and thus $g_i$ is non-constant.
 	In case $\mu_i = 0$, we have $\deg_\mu(f_i) = \theta \mu_i = 0$
 	and thus $g_i = f_i$. As $f$ is dominant, the latter polynomial is non-constant.
 	This shows~\ref{Lem:MaximalEigenVectorLeadingpartNotZero_2}.
 \end{proof}
 
\subsection{Inequalities obtained by iterations}\label{SubSec:IneqDegrees}

\begin{lemma}\label{Lemma:Dynamicaldegreeestimate}
Let $f=(f_1,\ldots,f_n)\in \End(\A^n)$ be a dominant endomorphism. Suppose that $\mu=(\mu_1,\ldots,\mu_n)\in (\mathbb{R}_{\ge 0})^n$ and that $\theta = \deg_\mu(f) \in \RR_{\geq 0}$.
Let $g=(g_1,\ldots,g_n)\in \End(\A^n)$ be the $\mu$-leading part of $f$. 
Then the following hold:
\begin{enumerate}[leftmargin=*]
\item\label{fsumhomgsmall}
We can write $f$ as a finite sum $f=g+\sum_{0\le \xi< \theta} g_\xi$, where each $g_\xi\in  \End(\A^n)$ is $\mu$-homogeneous of degree~$\xi$.
\item\label{grfrh}
The $i$-the coordinate function $(g^r)_i$ of $g^r$ is the $\mu$-homogeneous part of 
degree $\theta^r \mu_i$ of $(f^r)_i$  for each $i\in \{1,\ldots,n\}$ and each $r\ge1$.
\item\label{deggrfr}
$\deg_\mu(f^r)\le \theta^r$ for each $r\ge 1$.
\item\label{degmudyn}
We have
\[
	1\le \lim\limits_{r\to\infty} \max\limits_{i\in \{1,\ldots,n\}} \deg_\mu ((f^r)_i)^{1/r} =\lim\limits_{r\to\infty}(\deg_\mu(f^r))^{1/r}\le \theta \, .
\]
\item\label{degmudyn2}
If $\theta > 1$, the following are equivalent:
\begin{enumerate}[leftmargin=*]
\item\label{degmudyn2a}
$\lim_{r\to\infty}(\deg_\mu(f^r))^{1/r} = \theta$.
\item\label{degmudyn2b}
$f$ is $\mu$-algebraically stable.
\item\label{degmudyn2c}
For each $r\ge 1$ there is $i\in \{1,\ldots,n\}$ with $\mu_i>0$ and $(g^r)_i\not=0$.
\end{enumerate}
\end{enumerate}
\end{lemma}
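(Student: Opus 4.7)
The plan is to prove the five parts in order, with~\ref{grfrh} being the technical heart from which the rest falls out by routine Fekete-type reasoning. Part~\ref{fsumhomgsmall} is immediate from Lemma~\ref{Lem:HomDecEndtheta}: since $\deg_\mu(f) \leq \theta$, $f$ admits a finite decomposition $f = \sum_{0 \leq \xi \leq \theta} g_\xi$ with each $g_\xi$ $\mu$-homogeneous of degree $\xi$. For $i$ with $\mu_i > 0$, the $i$-th component of $g_\theta$ is forced to be the $\mu$-homogeneous part of $f_i$ of degree $\theta\mu_i$; for $i$ with $\mu_i = 0$, Lemma~\ref{Lem:HomDecEnd} makes $f_i$ a polynomial in the $\mu = 0$ variables, hence $\mu$-homogeneous of degree $0 = \theta\mu_i$, and we absorb $f_i$ into the $i$-th component of $g_\theta$, letting all other $g_\xi$ vanish in their $i$-th component. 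This identifies $g_\theta = g$.

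For~\ref{grfrh} I induct on $r$; the case $r = 1$ is the definition of $g$. For the inductive step, write $f = g + h$ where $h = \sum_{\xi < \theta} g_\xi$, so $h_j$ is either zero (when $\mu_j = 0$) or of $\mu$-degree strictly less than $\theta\mu_j$ (when $\mu_j > 0$). The identity $(f^r)_i = (f^{r-1})_i \circ f$ together with the inductive decomposition $(f^{r-1})_i = (g^{r-1})_i + p_i$, $\deg_\mu(p_i) < \theta^{r-1}\mu_i$, reduces us to analysing $(g^{r-1})_i \circ f$. A monomial $\zeta x^\alpha$ of $\mu$-degree $\eta$ appearing in $(g^{r-1})_i$ contributes $\zeta \prod_j (g_j + h_j)^{\alpha_j}$; expanding and using $\deg_\mu(h_j) < \deg_\mu(g_j)$ (or $h_j = 0$), the $\mu$-homogeneous part of degree $\theta\eta$ is precisely $\zeta \prod_j g_j^{\alpha_j} = \zeta x^\alpha \circ g$. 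Summing over the monomials of $(g^{r-1})_i$ (all of $\mu$-degree $\theta^{r-1}\mu_i$) yields $(g^{r-1})_i \circ g = (g^r)_i$ as the $\mu$-homogeneous part of $(f^r)_i$ of degree $\theta^r \mu_i$. The remainder $p_i \circ f$ has $\mu$-degree at most $\theta \cdot \deg_\mu(p_i) < \theta^r\mu_i$ by submultiplicativity $\deg_\mu(p \circ q) \leq \deg_\mu(p)\deg_\mu(q)$, which is a direct consequence of $-\deg_\mu$ being a valuation (Remark~\ref{RemVal}). The careful bookkeeping of top parts and the separate treatment of zero-weight components is the main obstacle.

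Parts~\ref{deggrfr} and~\ref{degmudyn} are routine once~\ref{grfrh} is in hand. Submultiplicativity gives $\deg_\mu(f^r) \leq \deg_\mu(f)^r = \theta^r$, proving~\ref{deggrfr}. For~\ref{degmudyn}, after reindexing so that $\mu_1 = \cdots = \mu_m = 0$ and $\mu_{m+1}, \ldots, \mu_n > 0$, Remark~\ref{Rem:DegreeMu}\ref{degmudeg} sandwiches $\deg_\mu((f^r)_i)$ between constant multiples of $\deg_{x_{m+1}, \ldots, x_n}((f^r)_i)$; combined with $\mu_{\min}\deg_\mu(f^r) \leq \max_i \deg_\mu((f^r)_i) \leq \mu_{\max}\deg_\mu(f^r)$ (minimum and maximum taken over positive $\mu_i$) and Corollary~\ref{cor.existence_ddeg}, both $\max_i \deg_\mu((f^r)_i)^{1/r}$ and $\deg_\mu(f^r)^{1/r}$ converge to the same real number in $\RR_{\geq 1}$. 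The upper bound $\leq \theta$ is~\ref{deggrfr}.

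Finally for~\ref{degmudyn2}: the implication \ref{degmudyn2b} $\Rightarrow$ \ref{degmudyn2a} is clear. Conversely, if~\ref{degmudyn2a} holds then, since the Fekete limit equals $\inf_r \deg_\mu(f^r)^{1/r}$, each term is $\geq \theta$; combined with the reverse inequality from~\ref{deggrfr}, equality holds for all $r$, which is~\ref{degmudyn2b}. The equivalence \ref{degmudyn2b} $\Leftrightarrow$ \ref{degmudyn2c} is read off from~\ref{grfrh}: for $\mu_i > 0$, $\deg_\mu((f^r)_i) = \theta^r\mu_i$ if and only if the top piece $(g^r)_i$ is non-zero (and for $\mu_i = 0$ the equation holds trivially since both sides are zero), so $\deg_\mu(f^r) = \theta^r$ if and only if some $i$ with $\mu_i > 0$ has $(g^r)_i \neq 0$.
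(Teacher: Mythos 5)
Your proof is correct. For parts~\ref{fsumhomgsmall}--\ref{degmudyn} and for the equivalence \ref{degmudyn2b}$\,\Leftrightarrow\,$\ref{degmudyn2c} you follow essentially the paper's route: the decomposition $f=g+\sum_{\xi<\theta}g_\xi$ via Lemma~\ref{Lem:HomDecEndtheta}, an induction on $r$ isolating the top $\mu$-homogeneous piece of $(f^r)_i$ (the paper organises the expansion through the homogeneous endomorphisms $g_\xi$ and Lemma~\ref{Lemm:DegreeEndhom}, you expand $\prod_j(g_j+h_j)^{\alpha_j}$ monomial by monomial --- the same computation), and the comparison of $\deg_\mu(f^r)$ with $\max_i\deg_\mu((f^r)_i)$. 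The one genuine divergence is the implication \ref{degmudyn2a}$\,\Rightarrow\,$\ref{degmudyn2b}: the paper instead proves \ref{degmudyn2a}$\,\Rightarrow\,$\ref{degmudyn2c} by contraposition, picking an auxiliary $\theta'$ with $1<\theta'<\theta$ such that $\deg_\mu((f^s)_i)\le(\theta')^s\mu_i$ and then applying~\ref{degmudyn} to $f^s$, whereas you observe that $(\deg_\mu(f^r))_{r\ge 1}$ is submultiplicative, so by Lemma~\ref{lem.dynamical_degree_exists} its Fekete limit is an infimum, and $\lim_{r}(\deg_\mu(f^r))^{1/r}=\theta$ then forces $\deg_\mu(f^r)\ge\theta^r$ for every $r$, which together with~\ref{deggrfr} gives algebraic stability. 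Your version is shorter and does not even need $\theta>1$ for this step; the only point you leave implicit is that $\deg_\mu(f^r)\ge 1$ (needed to invoke Lemma~\ref{lem.dynamical_degree_exists} as stated), which follows from dominance exactly as in the lower bound of~\ref{degmudyn}, so this is a minor omission rather than a gap.
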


\begin{proof}
As $\deg_\mu(f) = \theta$, we have $\deg_\mu(f_i)\le \theta \mu_i$ for each $i\in \{1,\ldots,n\}$. 
Moreover, as $f$ is dominant and $\mu\not=0$, there are $i,j\in \{1,\ldots,n\}$ such that $\mu_i>0$ and $\deg_{x_i}(f_j)\ge 1$. This implies that $\deg_\mu(f_j)\ge \mu_i>0$ and thus 
\[
	0<\deg_\mu(f) = \theta \, .
\] 

We now observe that $\deg_\mu(f-g)<\theta$. Indeed, for each $j\in \{1,\ldots,n\}$, the $j$-th component $g_j$ of $g$ is the $\mu$-homogeneous part of $f_j$ of degree $\theta\mu_j\ge \deg_\mu(f_j)$. If $\mu_j=0$, then $f_j=g_j$, and if $\mu_j>0$, then $\deg_\mu(f_j-g_j)<\theta\mu_j$.

By Lemma~\ref{Lem:HomDecEndtheta}, we  can write $f-g$ as a finite sum 
$f-g=\sum_{0\le \xi<\theta} g_\xi$, where each $g_\xi\in  \End(\A^n)$ is $\mu$-homogeneous of degree~$\xi$. This gives \ref{fsumhomgsmall}.

We now prove \ref{grfrh}-\ref{deggrfr} by induction on $r\ge 1$. For $r=1$, \ref{grfrh} follows from the definition of $g$.
Moreover, \ref{deggrfr} is given by hypothesis.

We now assume  \ref{grfrh}-\ref{deggrfr} for some integer $r\ge 1$ and prove them for $r+1$. For each $i\in \{1,\ldots,r\}$, we write $(f^{r})_i=(g^{r})_i+s_i$, where $(g^{r})_i$ is $\mu$-homogeneous of degree $\theta^r\mu_i$ and $\deg_\mu(s_i)<\theta^r\mu_i$. This gives
\[
\begin{array}{rcl}
(f^{r+1})_i&=&((g^{r})_i+s_i) \circ f \\
&\stackrel{\ref{fsumhomgsmall}}{=}&
(g^{r+1})_i +s_i \circ g +\sum\limits_{0\le \xi< \theta} ((g^{r})_i+s_i) \circ g_\xi
\end{array}
\]
As $g$ is $\mu$-homogeneous of degree $\theta$, the polynomial $(g^{r+1})_i$
is $\mu$-homogeneous of degree $\theta^{r+1}\mu_i$ (Lemma~\ref{Lemm:DegreeEndhom}). As $s_i$ is a sum of $\mu$-homogeneous polynomials of degree $<\theta^r\mu_i$ and $g_\xi$ is 
$\mu$-homogeneous of degree $\xi<\theta$, we have 
\[
	\deg_\mu( s_i \circ g +\sum\limits_{0\le \xi< \theta} ((g^{r})_i+s_i) \circ g_\xi )<
	\theta^{r+1}\mu_i
\] 
(by using Lemma~\ref{Lemm:DegreeEndhom} again). This yields
\ref{grfrh}-\ref{deggrfr} for $r+1$.

We now prove \ref{degmudyn}.  We choose $i\in \{1,\ldots,n\}$ such that $\mu_i=\max\{\mu_1,\ldots,\mu_n\}$, and observe that for each $r\ge 1$, there is $j\in \{1,\ldots,n\}$ such that $\deg_{x_i}((f^r)_j)>0$ (as $f$ is dominant), so $\deg_\mu((f^r)_j) \ge \mu_i = \max\{\mu_1,\ldots,\mu_n\} >0$. This implies that 
\[
	1\le \lim\limits_{r\to\infty} \max\limits_{i\in \{1,\ldots,n\}} \deg_\mu ((f^r)_i)^{1/r}
\]
(the limit exists by Remark~\ref{Rem:DegreeMu}\ref{degmudeg}).
Let us write $I_0=\{i\in \{1,\ldots,n\}\mid \mu_i=0\}$. For each $i\in I_0$, we have  $\deg_\mu(f_i)\le \theta\mu_i=0$, so $f_i$ is a polynomial in the variables $\{x_j\mid j\in I_0\}$. This implies that the same holds for $(f^r)_i$, for each integer $r\ge 1$. Hence, $\deg_\mu((f^r)_i)=0$ for each $i\in I_0$. Writing
$I_{>0}=\{i\in \{1,\ldots,n\}\mid \mu_i>0\}$, we get for each $r\ge 1$, 
\[
	\deg_\mu(f^r)= \max\left\{\frac{\deg_\mu((f^r)_i)}{\mu_i} \big| i\in I_{>0}\right\} \, .
\]
As $\deg_\mu(f^r)\le \theta^r$ (Assertion~\ref{deggrfr}), we obtain
\[
	\lim_{r\to\infty} \left(\max_{i\in \{1,\ldots,n\}} \deg_\mu (f^r)_i\right)^{1/r}=\lim\limits_{r\to\infty}\left(\deg_\mu(f^r)\right)^{1/r}\le \theta \, .
\]

It remains to prove~\ref{degmudyn2}; for this, we assume that $\theta > 1$. 
For each $r\ge 1$, Assertion~\ref{deggrfr} gives $\deg_\mu(f^r)\le \theta^r$, or equivalently $\deg_\mu((f^r)_i)\le \theta^r \mu_i$ for each $i\in \{1,\ldots,n\}$. The equality $\deg_\mu(f^r) = \theta^r$ holds 
if and only if there exists $i\in \{1,\ldots,n\}$ such that $\mu_i>0$ and 
$\deg_\mu((f^r)_i)= \theta^r \mu_i$. Since $(g^r)_i$ is the $\mu$-homogeneous part of  $(f^r)_i$ 
of degree $\theta^r \mu_i$ (follows from \ref{grfrh}), this gives the equivalence between \ref{degmudyn2b} and \ref{degmudyn2c}. It remains then to prove $\ref{degmudyn2a}\Leftrightarrow \ref{degmudyn2c}$.

``$\ref{degmudyn2c}\Rightarrow \ref{degmudyn2a}$'': Suppose that for each $r\ge 1$ there is $i\in \{1,\ldots,n\}$ such that $\mu_i>0$ and $(g^r)_i\not=0$. There is then $j\in \{1,\ldots,n\}$ and an infinite set $I\subset \mathbb{N}$ such that $\mu_j>0$ and $(g^r)_j\not=0$ for each $r\in I$. Assertion \ref{grfrh} implies that $\deg_\mu((f^r)_j)\ge \theta^r\mu_j$, for each $r\in I$, which implies that
\[
	\lim\limits_{r\to\infty}\left(\max\limits_{i\in \{1,\ldots,n\}} \deg_\mu (f^r)_i \right)^{1/r}\ge\theta \, .
\] 
This, together with \ref{degmudyn}, gives $\lim_{r\to\infty}(\deg_\mu(f^r))^{1/r} = \theta$.

``$\ref{degmudyn2a}\Rightarrow \ref{degmudyn2c}$'': Conversely, suppose that there exists $s\ge 1$ such that $(g^s)_i=0$ for each $i\in \{1,\ldots,n\}$ with $\mu_i>0$. For all such $i$ we obtain $\deg_\mu((f^s)_i)< \theta^s \mu_i$ (by \ref{grfrh} and \ref{deggrfr}). As $\theta > 1$, there exists then $\theta'\in \r$ with $1<\theta'< \theta$ such that 
\[
	\deg_\mu((f^s)_i)\le\theta'^s\mu_i
\]
for each $i\in \{1,\ldots,n\}$. Applying the  inequality of \ref{degmudyn} for $f^s$, we obtain
\[
	\lim\limits_{r\to\infty}\left(\max\limits_{i\in \{1,\ldots,n\}} (\deg_\mu (f^{sr})_i)^{1/r} \right)\le \theta'^s
\]
which gives, by taking the $s$-th root,
\[
	\lim\limits_{r\to\infty}\left(\max\limits_{i\in \{1,\ldots,n\}} (\deg_\mu (f^r)_i)^{1/r} \right)\le \theta'<\theta.
\]
\end{proof}

Now we can give a short proof of Proposition~\ref{Prop:Dynamicaldegreeestimate}.
\begin{proof}[Proof of Proposition~$\ref{Prop:Dynamicaldegreeestimate}$]
	\ref{thetafinite}: As $\mu\in (\mathbb{R}_{> 0})^n$,  we have $\theta: = \deg_\mu(f) < \infty$ (Lemma~\ref{Lem:HomDecEnd}).
	
	Using Remark~\ref{Rem:DegreeMu}\ref{degmudeg} we get
		\[
			\lambda(f) = \lim_{r \to \infty} \max\limits_{i\in \{1,\ldots,n\}} (\deg_\mu (f^r)_i)^{1/r} \, . 
		\]
	By definition, $g$ is the $\mu$-leading part of $f$. Now, Lemma~\ref{Lemma:Dynamicaldegreeestimate}\ref{degmudyn} implies that $1 \leq \lambda(f) \leq \theta$. If $\theta > 1$, we moreover obtain 
	\[
		\lambda(f)=\theta\Leftrightarrow\deg_\mu(f^r)=\theta^r  \text{ for each }r\ge 1 \Leftrightarrow g^r\not=0 \text{ for each }r\ge 1
	\] 
	(by Lemma~\ref{Lemma:Dynamicaldegreeestimate}\ref{degmudyn} and Lemma~\ref{Lemma:Dynamicaldegreeestimate}\ref{degmudyn2}).
\end{proof}

Another consequence of Lemma~\ref{Lemma:Dynamicaldegreeestimate} is the following result, that generalises Proposition~\ref{Prop:Dynamicaldegreeestimate} to the case where some coordinates of $\mu$ are zero.
\begin{corollary}\label{Coro:Computelambda}
Let $f=(f_1,\ldots,f_n)\in \End(\A^n)$ be a dominant endomorphism and let $\mu=(\mu_1,\ldots,\mu_n)\in (\mathbb{R}_{\ge 0})^n$ be such that $\theta = \deg_\mu(f) <\infty$, and assume that $m\in \{0,\ldots,n\}$ exists, such that $\mu_i=0$ for $i\in \{1,\ldots,m\}$ and $\mu_i>0$ for $i\in \{m+1,\ldots,n\}$ $($which can always be obtained by conjugating with a permutation$)$. Then, the following hold:
\begin{enumerate}[leftmargin=*]
\item\label{fmEndAm}
For each $i\in \{1,\ldots,m\}$, we have $f_i\in \k[x_1,\ldots,x_m]$. Hence, the element 
$\hat{f}= (f_1,\ldots,f_m)$ belongs to $\End(\A^m)$.
\item\label{lambdhatfequal}
If $\lambda(\hat{f})=\theta$, then $\lambda(f)=\theta$.
\item\label{lambdhatfsmall}
If $\lambda(\hat{f})<\theta$, then $\lambda(f)=\theta \Leftrightarrow f$ is $\mu$-algebraically stable.
\end{enumerate}
\end{corollary}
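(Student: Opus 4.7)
My plan is to derive the three assertions by chaining together four previously established pieces: Lemma~\ref{Lem:HomDecEnd} for the structural statement~\ref{fmEndAm}, and then Lemma~\ref{lem.DinhNGuyen}, Remark~\ref{Rem:DegreeMu}\ref{degmudeg}, and Lemma~\ref{Lemma:Dynamicaldegreeestimate} to handle~\ref{lambdhatfequal} and~\ref{lambdhatfsmall}. No genuinely new input is needed; the corollary is really a repackaging.

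First, for~\ref{fmEndAm}, I would apply Lemma~\ref{Lem:HomDecEnd} directly: since $\deg_\mu(f) = \theta < \infty$, each coordinate $f_i$ with $\mu_i = 0$ (equivalently $i \leq m$) is a polynomial in the variables $\{x_j \mid \mu_j = 0\} = \{x_1, \ldots, x_m\}$. Thus $\hat{f} = (f_1, \ldots, f_m) \in \End(\AA^m)$, and a straightforward induction on $r$ yields $(f^r)_i = (\hat{f}^r)_i$ for every $i \leq m$ and every $r \geq 1$.

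Next, for~\ref{lambdhatfequal} and~\ref{lambdhatfsmall}, I would invoke Lemma~\ref{lem.DinhNGuyen} with this integer $m$, which gives
\[
    \lambda(f) = \max\{\lambda(\hat{f}), \, \lambda_2\}, \qquad \lambda_2 := \lim_{r \to \infty} \deg_{x_{m+1}, \ldots, x_n}(f^r)^{1/r}.
\]
Then Remark~\ref{Rem:DegreeMu}\ref{degmudeg} identifies $\lambda_2$ with $\lim_r \max_i \deg_\mu((f^r)_i)^{1/r}$, which Lemma~\ref{Lemma:Dynamicaldegreeestimate}\ref{degmudyn} in turn identifies with $\lim_r \deg_\mu(f^r)^{1/r}$ and bounds by $\theta$. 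With $\lambda_2 \leq \theta$ in hand, \ref{lambdhatfequal} is immediate: $\lambda(\hat{f}) = \theta$ forces $\lambda(f) = \max\{\theta, \lambda_2\} = \theta$. For \ref{lambdhatfsmall}, the strict inequality $\lambda(\hat{f}) < \theta$ combined with $\lambda(\hat{f}) \geq 1$ forces $\theta > 1$, so Lemma~\ref{Lemma:Dynamicaldegreeestimate}\ref{degmudyn2} activates the equivalence $\lambda_2 = \theta \Leftrightarrow f$ is $\mu$-algebraically stable. Since the maximum defining $\lambda(f)$ is attained by $\lambda_2$, this gives $\lambda(f) = \theta$ precisely when $f$ is $\mu$-algebraically stable.

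The main obstacle is purely bookkeeping: correctly chaining the three equivalent descriptions of $\lambda_2$ (the $x_{m+1}, \ldots, x_n$-degree limit, the coordinate-wise $\deg_\mu$ limit, and $\deg_\mu(f^r)^{1/r}$) and handling the boundary case $m = 0$, where Lemma~\ref{lem.DinhNGuyen}'s convention $\lambda_1 = 1$ applies and the statement reduces to Proposition~\ref{Prop:Dynamicaldegreeestimate}. Note also that $m = n$ cannot occur, since $\mu \neq 0$, so Lemma~\ref{lem.DinhNGuyen} and Remark~\ref{Rem:DegreeMu}\ref{degmudeg} (both formulated for $m \in \{0, \ldots, n-1\}$) always apply.
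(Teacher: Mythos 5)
Your proposal is correct and takes essentially the same route as the paper's proof: Lemma~\ref{Lem:HomDecEnd} for assertion~\ref{fmEndAm}, then Lemma~\ref{lem.DinhNGuyen} together with Remark~\ref{Rem:DegreeMu}\ref{degmudeg} and Lemma~\ref{Lemma:Dynamicaldegreeestimate}\ref{degmudyn} to get $\lambda(f)=\max\{\lambda(\hat f),\lim_r \deg_\mu(f^r)^{1/r}\}$ with the second term bounded by $\theta$, and Lemma~\ref{Lemma:Dynamicaldegreeestimate}\ref{degmudyn2} for assertion~\ref{lambdhatfsmall}. Even the auxiliary observation that $1\le\lambda(\hat f)<\theta$ forces $\theta>1$ matches the paper's argument, so nothing further is needed.
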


\begin{proof}
Assertion~\ref{fmEndAm} follows from the fact that $\deg_\mu(f)<\infty$ and the choice of $m$ (Lemma~\ref{Lem:HomDecEnd}\ref{fzerovariables}).

Lemma~\ref{lem.DinhNGuyen} then gives $\lambda(f)=\max\{\lambda(\hat{f}), \lim_{r\to \infty}\deg_{x_{m+1},\ldots,x_n}(f^r)^{1/r}\}$. By using the equality $\lim_{r\to \infty}\deg_{x_{m+1},\ldots,x_n}(f^r)^{1/r}=\lim_{r\to \infty}\deg_{\mu}(f^r)^{1/r}$ (see Remark~\ref{Rem:DegreeMu}\ref{degmudeg} and Lemma~\ref{Lemma:Dynamicaldegreeestimate}\ref{degmudyn}), we obtain
\[\lambda(f)=\max\{\lambda(\hat{f}), \lim\limits_{r\to \infty}\deg_{\mu}(f^r)^{1/r}\}.\]
Moreover, Lemma~\ref{Lemma:Dynamicaldegreeestimate}\ref{degmudyn} implies 
that $\lim_{r\to \infty}\deg_{\mu}(f^r)^{1/r}\le \deg_{\mu}(f)=\theta.$
This provides~\ref{lambdhatfequal}. To show \ref{lambdhatfsmall}, we assume that 
$\lambda(\hat{f})<\theta$ and obtain $\lambda(f)=\theta \Leftrightarrow \lim_{r\to \infty}\deg_{\mu}(f^r)^{1/r}=\theta$. 
This is equivalent to ask that $ f$ is $\mu$-algebraically stable, by Lemma~\ref{Lemma:Dynamicaldegreeestimate}\ref{degmudyn2}
(note that $1 \leq \lambda(\hat{f})$, since $f$ and thus $\hat{f}$ is dominant).
\end{proof}

We finish this section by the following simple observation:
\begin{lemma}
Let $f\in \End(\A^n)$ be a dominant endomorphism. For each $\mu\in (\r_{>0})^n$ such that $\theta=\deg_\mu(f)\in \r_{>1}$ and each translation $\tau=(x_1+c_1,\ldots,x_n+c_n)\in \Aut(\A^n)$ where $c_1,\ldots,c_n\in \k$, the following hold:
\[
	f \text{ is }\mu\text{-algebraically stable}
	\Leftrightarrow \tau\circ f \text{ is }\mu\text{-algebraically stable} \, .
\]
\end{lemma}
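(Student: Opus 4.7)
The plan is to show that $f$ and $\tau \circ f$ have the same $\mu$-leading part, and then to invoke the equivalence in Proposition~\ref{Prop:Dynamicaldegreeestimate} twice. Write $f = (f_1, \ldots, f_n)$ and let $g = (g_1, \ldots, g_n) \in \End(\A^n)$ denote the $\mu$-leading part of $f$, so that $g_i$ is the $\mu$-homogeneous component of $f_i$ of degree $\theta \mu_i$.

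First I would compute the $\mu$-leading part of $\tau \circ f$. Since $\tau \circ f = (f_1 + c_1, \ldots, f_n + c_n)$ and each constant $c_i \in \k$ satisfies $\deg_\mu(c_i) \le 0$, the hypotheses $\mu_i > 0$ and $\theta > 1$ give $\theta \mu_i > 0 \ge \deg_\mu(c_i)$ for every $i$. Hence the $\mu$-homogeneous component of degree $\theta \mu_i$ of $f_i + c_i$ is the same as that of $f_i$, namely $g_i$. This shows at the same time that $\deg_\mu(\tau \circ f) = \theta = \deg_\mu(f)$ (using Lemma~\ref{Lem:HomDecEndtheta} to compare $\mu$-degrees coordinate-wise) and that the $\mu$-leading part of $\tau \circ f$ equals $g$.

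Next I would note that $\tau \circ f$ is dominant, since $\tau$ is an automorphism of $\A^n$ and $f$ is dominant, so Proposition~\ref{Prop:Dynamicaldegreeestimate}\ref{lambdaequal} applies to both $f$ and $\tau \circ f$ with the common datum $(\mu, \theta, g)$. This gives
\[
f \text{ is $\mu$-algebraically stable} \ \Longleftrightarrow \ g^r \neq 0 \text{ for each } r \ge 1 \ \Longleftrightarrow \ \tau \circ f \text{ is $\mu$-algebraically stable},
\]
which is the claimed equivalence. I do not expect any real obstacle: the key point is simply that constants have $\mu$-degree zero and $\theta \mu_i$ is strictly positive, so translations do not disturb the $\mu$-leading part.
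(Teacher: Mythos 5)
Your proof is correct and follows essentially the same route as the paper: both show that $f$ and $\tau\circ f$ have the same $\mu$-leading part $g$ (since $\theta\mu_i>0$ for all $i$, the added constants cannot affect the components of degree $\theta\mu_i$) and then characterise $\mu$-algebraic stability by the non-vanishing of $g^r$ for all $r\ge 1$. The only cosmetic difference is that you invoke Proposition~\ref{Prop:Dynamicaldegreeestimate}\ref{lambdaequal} while the paper cites Lemma~\ref{Lemma:Dynamicaldegreeestimate}\ref{degmudyn2} directly, but for $\mu\in(\r_{>0})^n$ these are the same criterion.
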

\begin{proof}
Denote by $g$ the $\mu$-leading part of $f$. As $\mu\in (\r_{>0})^n$, no component of $g$ contains any constant. Hence, $g$ is also the $\mu$-leading part of $\tau\circ f$.
By Lemma~\ref{Lemma:Dynamicaldegreeestimate}\ref{degmudyn2}, $f$ (respectively $\tau\circ f$) is $\mu$-algebraically stable if and only if for each $r\ge 1$ there is $i\in \{1,\ldots,n\}$ such that $(g^r)_i\not=0$.
\end{proof}

\section{Matrices associated to endomorphisms and the proof of Proposition~\ref{Prop:MonomialEig}}
\label{Sec:MatrixEndoProofProp:MonomialEig}
\subsection{Spectral radii of $N$-uples of matrices}
In the sequel, we fix the usual Euclidean norm on $\r^n$, and on $n\times n$-matrices:
\begin{definition}\label{Defi:NormRn}
Let $n\ge 1$. 
\begin{enumerate}[leftmargin=*]
\item
We endow $\r^n$ will the usual norm: \[\lVert x\rVert=\sqrt{\sum_{i=1}^n x_i^2},\text{ for each } 
x=(x_1, \ldots, x_n) \in \r^n.\]

\item
This endows  the ring $ \Mat_n(\r)$ of $n\times n$-real matrices with the norm

\[\lVert M\rVert=\sup  \left.\left\{\frac{\lVert Mv\rVert}{\lVert v\rVert} \right| v\in \r^n\setminus \{0\}\right\}, \text{ for each }M\in  \Mat_n(\r).\]

\item The \emph{spectrum} of $M \in \Mat_n(\r)$ is the finite subset $\sigma(M) \subset \CC$ of eigenvalues of $M$.

\item \label{Defi:NormRn:Spectralradius}
The \emph{spectral radius} of $M\in  \Mat_n(\r)$ is defined by 
\[
	\rho(M)=\max_{\lambda \in \sigma(M)} \lvert \lambda \rvert
\]
and satisfies
\[
	\rho(M)=\lim_{n\to \infty} \lVert  M^n \rVert^{1/n} \, .
\]
If $M = (m_{i, j})_{i, j = 1}^n$ and $N = (n_{i,j})_{i, j = 1}^n$ are matrices in $\Mat_n(\r)$ such 
that for each $(i, j)$ we have $0 \leq m_{i, j} \leq n_{i, j}$, then $\rho(M) \leq \rho(N)$.

\item We have a partial order on $\RR^n$ given by 
\[
x \leq y \quad \textrm{iff} \quad 
\textrm{$x_i \leq y_i$ for all $i = 1, \ldots, n$}
\]
where $x = (x_1, \ldots, x_n)$ and $y = (y_1, \ldots, y_n)$. Note that for $0 \leq x \leq y$
we have $\lVert x \rVert \leq \lVert y \rVert$.

\item For $M \in \Mat_n(\r)$ we denote by $\chi_M$ the characteristic polynomial of $M$.
\end{enumerate}
\end{definition}

\subsection{The Perron-Frobenius Theorem and its applications}
The Perron-Frobe\-nius theory was first established for matrices with positive coefficients, then generalised to irreducible matrices with non-negative coefficients and then to any matrices with non-negative coefficients.
There are three equivalent definitions of reducible matrices (see \cite[Vol.~2, Chap. XIII, \S1, Definitions~2,2',2'']{Ga1959The-theory-of-matr}). Let us recall one of them:
\begin{definition} \cite[Vol.~2, Chap. XIII, \S1, Definition~2']{Ga1959The-theory-of-matr}
For each $n\ge 1$, a matrix  $M\in \Mat_n(\r_{\ge 0})$ is called \emph{reducible} if there is a permutation matrix $S\in \GL_n(\Z)$ such that the matrix 
$SMS^{-1}\in \Mat_n(\r_{\ge 0})$ is block-triangular, i.e.
\[
	SMS^{-1} =
	\begin{pmatrix} 
		A & 0 \\ 
		C & D
	\end{pmatrix} 
\]
where $A,D$ are square matrices, and where the zero matrix has positive dimensions. 

A matrix  $M\in \Mat_n(\r_{\ge 0})$ is called \emph{irreducible} if it is not reducible.
\end{definition}
\begin{lemma}\label{Lemm:IrredRedMatrix}\cite[Vol.~2, Chap. XIII, \S4]{Ga1959The-theory-of-matr}
For each reducible matrix $M\in \Mat_n(\r_{\ge 0})$, there is a permutation matrix $S\in \GL_n(\Z)$ such that $SMS^{-1}$ is a lower triangular block-matrix \[\begin{pmatrix}  A_{1,1} & 0  & \cdots & 0 \\
	A_{2,1} & A_{2,2} & \ddots & 0  \\
	\vdots & \ddots & \ddots & 0  \\
	A_{m,1} & \cdots &   A_{m,m-1}& A_{m,m}\end{pmatrix}\]
	where $A_{1,1},\ldots,A_{m,m}$ are irreducible matrices.
\end{lemma}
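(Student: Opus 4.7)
The plan is to argue by strong induction on $n$. The base case $n = 1$ is immediate: a $1 \times 1$ matrix is automatically irreducible, since the definition of reducibility requires the zero block in the upper-right corner to have strictly positive dimensions, which is impossible for $n = 1$. One then takes $m = 1$, $S = I_1$, and $A_{1,1} = M$.

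For the inductive step with $n \geq 2$, if $M$ is already irreducible there is nothing to prove (set $m = 1$, $S = I_n$, $A_{1,1} = M$). Otherwise, by the definition of reducibility there exists a permutation matrix $S_1 \in \GL_n(\Z)$ with
\[
S_1 M S_1^{-1} = \begin{pmatrix} A & 0 \\ C & D \end{pmatrix},
\]
where $A \in \Mat_p(\r_{\ge 0})$ and $D \in \Mat_q(\r_{\ge 0})$ have non-negative entries and sizes $p, q \geq 1$ with $p + q = n$. Since $p, q < n$, the induction hypothesis applied to $A$ and to $D$ provides permutation matrices $S_A \in \GL_p(\Z)$ and $S_D \in \GL_q(\Z)$ such that $S_A A S_A^{-1}$ and $S_D D S_D^{-1}$ are lower triangular block matrices whose diagonal blocks are irreducible.

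I would conclude by taking $S = \begin{pmatrix} S_A & 0 \\ 0 & S_D \end{pmatrix} S_1 \in \GL_n(\Z)$, which is a permutation matrix as a product of permutation matrices, and by performing the direct block computation
\[
SMS^{-1} = \begin{pmatrix} S_A A S_A^{-1} & 0 \\ S_A C S_D^{-1} & S_D D S_D^{-1} \end{pmatrix}.
\]
The two diagonal blocks are already in the desired form by the inductive hypothesis, and concatenating their lists of irreducible diagonal pieces yields the required sequence $A_{1,1}, \ldots, A_{m,m}$ of irreducible matrices. All remaining entries lie strictly below this block diagonal: the contributions inside $S_A A S_A^{-1}$ and $S_D D S_D^{-1}$ by induction, and the new contribution $S_A C S_D^{-1}$ sitting in the off-diagonal block below. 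Hence $SMS^{-1}$ has the stated shape.

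The main obstacle is not conceptual but organisational: one must keep track of the block sizes and confirm that the concatenation of the two inductively produced block partitions, together with the extra lower-left block created by conjugation by $\mathrm{diag}(S_A, S_D)$, indeed assembles into a single lower triangular block decomposition with irreducible diagonal blocks. Once the permutation $S$ is written down explicitly, this verification reduces to inspecting the shape of the above block matrix.
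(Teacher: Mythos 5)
Your induction is the standard (and correct) way to prove this; the paper itself offers no proof at all, only a citation to Gantmacher, so there is nothing to compare against beyond noting that your argument is the classical one. The setup is right: you strengthen the statement to cover all non-negative matrices (irreducible ones getting the trivial decomposition $m=1$), which is exactly what makes the strong induction on $n$ go through, and conjugating by $\mathrm{diag}(S_A,S_D)$ after the first reduction does assemble the two inductive decompositions into one.

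One small transcription error: the lower-left block of
\[
\begin{pmatrix} S_A & 0 \\ 0 & S_D \end{pmatrix}
\begin{pmatrix} A & 0 \\ C & D \end{pmatrix}
\begin{pmatrix} S_A^{-1} & 0 \\ 0 & S_D^{-1} \end{pmatrix}
\]
is $S_D C S_A^{-1}$, not $S_A C S_D^{-1}$ (the left factor acts on the rows of $C$, which are indexed by the second block, and the right factor on its columns, indexed by the first). This does not affect the shape of the matrix or any part of the conclusion, since that block lies strictly below the diagonal in any case, but it should be corrected.
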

\begin{theorem}[Perron-Frobenius Theorem]\label{Thm:PerronFrobenius} \cite[Vol.~2, Chap. XIII, \S2 and \S3, 
Theorems~2 and~3]{Ga1959The-theory-of-matr}
For each $M\in \Mat_n(\r_{\ge 0})$, there exists an eigenvector $v\in (\r_{\ge 0})^n\setminus\{0\}$ to the eigenvalue $\rho(M)$. If $M$ is moreover irreducible, we can choose $v$ in $(\r_{> 0})^n$.
\end{theorem}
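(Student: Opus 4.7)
The plan is to establish the statement first when $M$ has strictly positive entries, then extend to the irreducible non-negative case by a perturbation argument, and finally to arbitrary non-negative matrices using the block-triangular decomposition from Lemma~\ref{Lemm:IrredRedMatrix}.

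For the strictly positive case, I would consider the standard simplex $\Delta = \{x \in (\RR_{\geq 0})^n : \sum_i x_i = 1\}$ and the continuous self-map $T \colon \Delta \to \Delta$ defined by $T(x) = Mx / \sum_i (Mx)_i$, which is well defined since $Mx$ has strictly positive entries whenever $x \in \Delta$. By Brouwer's fixed-point theorem, $T$ has a fixed point $v \in \Delta$, yielding $Mv = \lambda v$ for some $\lambda > 0$ with $v \in (\RR_{>0})^n$. To identify $\lambda$ with $\rho(M)$, I would apply the same argument to $M^{\mathrm{T}}$ to obtain a strictly positive left eigenvector $u$ satisfying $u^{\mathrm{T}} M = \lambda u^{\mathrm{T}}$; then for any eigenvalue $\nu \in \CC$ of $M$ with eigenvector $w$, the componentwise inequality $\lvert \nu \rvert \cdot \lvert w \rvert \leq M \lvert w \rvert$ paired with $u$ gives $\lvert \nu \rvert \cdot u^{\mathrm{T}} \lvert w \rvert \leq \lambda \cdot u^{\mathrm{T}} \lvert w \rvert$, whence $\lvert \nu \rvert \leq \lambda$ and thus $\lambda = \rho(M)$.

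For a general $M \in \Mat_n(\RR_{\geq 0})$, I would consider the perturbation $M_\varepsilon = M + \varepsilon J$, where $J$ is the all-ones matrix, which has strictly positive entries for each $\varepsilon > 0$. The previous step yields an eigenvector $v_\varepsilon \in \Delta$ with $M_\varepsilon v_\varepsilon = \rho(M_\varepsilon) v_\varepsilon$. Using compactness of $\Delta$, I would extract a convergent subsequence $v_{\varepsilon_k} \to v_0 \in \Delta$; since the spectral radius depends continuously on the entries of the matrix, $\rho(M_{\varepsilon_k}) \to \rho(M)$, and passing to the limit gives $M v_0 = \rho(M) v_0$ with $v_0 \in (\RR_{\geq 0})^n \setminus \{0\}$, which is the first assertion.

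When $M$ is irreducible, I would invoke the standard fact that $(I+M)^{n-1}$ then has strictly positive entries: irreducibility is equivalent to strong connectivity of the directed graph with adjacency matrix $M$, so that there is a walk of length at most $n-1$ between any two vertices. Applying this to the non-negative eigenvector $v$ produced above gives $(I+M)^{n-1} v = (1+\rho(M))^{n-1} v$ with strictly positive entries, which forces $v \in (\RR_{>0})^n$. The main technical obstacle I anticipate is the identification $\lambda = \rho(M)$ in the strictly positive case without circularity; my workaround is the simultaneous construction of right and left positive eigenvectors described above, which cleanly dominates the modulus of any other eigenvalue via a single componentwise inequality.
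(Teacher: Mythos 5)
Your argument is correct, but it is worth noting that the paper does not prove this statement at all: it is quoted as a classical theorem with a reference to Gantmacher, so any comparison is with that textbook treatment rather than with an argument in the text. Your route (Brouwer's fixed-point theorem on the simplex for strictly positive matrices, identification of the fixed eigenvalue with $\rho(M)$ via a positive left eigenvector of $M^{\mathrm{T}}$, a perturbation $M+\varepsilon J$ plus compactness of the simplex and continuity of the spectral radius for the general non-negative case, and positivity of $(I+M)^{n-1}$ for the irreducible case) is a standard and sound self-contained proof; it is more topological than Gantmacher's classical approach (which proceeds via the Collatz--Wielandt variational characterization and a careful analysis of irreducible matrices), and it buys brevity at the cost of invoking Brouwer and the continuity of $\rho$ in the matrix entries --- the latter being exactly the fact the paper itself uses, with a citation, in the proof of Proposition~\ref{prop.StepOne}. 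Three small points deserve attention. First, when you produce the left eigenvector $u$ of $M^{\mathrm{T}}$ you should justify that its eigenvalue coincides with $\lambda$: pairing gives $\lambda'\, u^{\mathrm{T}}v = u^{\mathrm{T}}Mv = \lambda\, u^{\mathrm{T}}v$ and $u^{\mathrm{T}}v>0$ since both vectors are strictly positive, so $\lambda'=\lambda$; and one should also say explicitly that $\lambda\le\rho(M)$ because $\lambda$ is an eigenvalue. Second, your opening plan announces a reduction of the general case to the irreducible one via Lemma~\ref{Lemm:IrredRedMatrix}, but the argument you actually carry out handles the general case directly by perturbation and never uses that lemma; the executed version is fine, but the announced strategy and the proof should be made consistent. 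Third, the equivalence between the paper's definition of irreducibility (no permutation conjugate in block-triangular form) and strong connectivity of the associated digraph, which underlies the positivity of $(I+M)^{n-1}$, is standard but should at least be stated as the bridge between the two formulations.
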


\begin{theorem}[Theorem of Lind on weak-Perron numbers]\label{LindTheorem}For each $\lambda\in \r$, the following conditions are equivalent:
\begin{enumerate}
\item\label{weakPerron1}
$\lambda$ is a weak Perron number $($see Definition~$\ref{def.Perron_number})$;
\item\label{weakPerron2}
$\lambda$ is the spectral radius of a non-zero square matrix with non-negative integral coefficients;
\item\label{weakPerron3}
$\lambda$ is the spectral radius of an irreducible square matrix with non-negative integral coefficients;
\item\label{weakPerron4}
$\lambda > 0$ and $\lambda^m$ is a Perron number for some $m\ge 1$.
\end{enumerate}
\end{theorem}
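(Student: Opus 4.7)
The plan is to establish the cycle of implications $(\ref{weakPerron3}) \Rightarrow (\ref{weakPerron2}) \Rightarrow (\ref{weakPerron1}) \Rightarrow (\ref{weakPerron4}) \Rightarrow (\ref{weakPerron3})$.

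The implication $(\ref{weakPerron3}) \Rightarrow (\ref{weakPerron2})$ is immediate, as any irreducible non-negative integer matrix of positive size is non-zero. For $(\ref{weakPerron2}) \Rightarrow (\ref{weakPerron1})$, let $M$ be a non-zero non-negative integer matrix with $\rho(M) = \lambda$. By the Perron-Frobenius theorem (Theorem~\ref{Thm:PerronFrobenius}), $\lambda$ is an eigenvalue of $M$; since $\chi_M$ is monic with coefficients in $\ZZ$, $\lambda$ is an algebraic integer, and every Galois conjugate $\mu$ of $\lambda$ is another root of $\chi_M$, hence an eigenvalue of $M$, so $|\mu| \leq \rho(M) = \lambda$. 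The bound $\lambda \geq 1$ follows because $|N_{\QQ(\lambda)/\QQ}(\lambda)|$ is a positive integer bounded above by $\lambda^{[\QQ(\lambda):\QQ]}$, so $\lambda^{[\QQ(\lambda):\QQ]} \geq 1$.

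For $(\ref{weakPerron1}) \Rightarrow (\ref{weakPerron4})$, the key claim is that any Galois conjugate $\mu$ of a weak Perron $\lambda$ with $|\mu| = \lambda$ satisfies that $\mu/\lambda$ is a root of unity. Granted this, taking $m$ to be a common multiple of the orders of these finitely many roots of unity yields $\mu^m = \lambda^m$ for all such $\mu$, while $|\mu^m| < \lambda^m$ for the remaining conjugates of $\lambda$; hence $\lambda^m$ is Perron. For $(\ref{weakPerron4}) \Rightarrow (\ref{weakPerron3})$, I would first invoke the stronger form of Lind's theorem for Perron numbers, namely that every Perron number is the spectral radius of an irreducible non-negative integer matrix $N$. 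Given such $N$ with $\rho(N) = \lambda^m$, one forms the $m \times m$ block matrix $M$ with identity blocks on the superdiagonal and $N$ in the lower-left corner: $M$ has non-negative integer entries, is irreducible (its associated directed graph is strongly connected), and $M^m$ is block diagonal with $N$ on each diagonal block, so $\rho(M)^m = \rho(N) = \lambda^m$, giving $\rho(M) = \lambda$.

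The principal obstacles are twofold. First, in $(\ref{weakPerron1}) \Rightarrow (\ref{weakPerron4})$, the root-of-unity claim cannot be obtained from Kronecker's classical theorem applied directly to $\mu/\lambda$, since this ratio need not be an algebraic integer; the standard remedy is to form a carefully chosen auxiliary algebraic integer in the splitting field of the minimal polynomial of $\lambda$ (involving products of conjugates on the maximal circle) to which Kronecker does apply, and to extract the desired conclusion from its triviality. Second, the construction in $(\ref{weakPerron4}) \Rightarrow (\ref{weakPerron3})$ of an irreducible non-negative integer matrix realising a prescribed Perron number as its spectral radius is itself a substantial result. Both parts are classical and due to Lind; for a concise exposition one may cite Lind's original paper as a black box.
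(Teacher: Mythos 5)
Your proposal is correct in outline, but it is worth noting that the paper does not really ``prove'' this theorem at all: it simply cites \cite[Theorem~3]{Lind} for \ref{weakPerron1}$\Leftrightarrow$\ref{weakPerron3}, deduces \ref{weakPerron2}$\Leftrightarrow$\ref{weakPerron3} from the block-triangular decomposition of reducible matrices (Lemma~\ref{Lemm:IrredRedMatrix}), and cites \cite[Lemma~4]{Schinzel} or \cite[Theorem~2]{Brunotte} for \ref{weakPerron1}$\Leftrightarrow$\ref{weakPerron4}. You instead organise everything as a cycle \ref{weakPerron3}$\Rightarrow$\ref{weakPerron2}$\Rightarrow$\ref{weakPerron1}$\Rightarrow$\ref{weakPerron4}$\Rightarrow$\ref{weakPerron3}, and the parts you actually carry out are sound and more explicit than anything in the paper: the derivation of \ref{weakPerron1} from \ref{weakPerron2} via the Perron--Frobenius theorem (Theorem~\ref{Thm:PerronFrobenius}), the divisibility of $\chi_M$ by the minimal polynomial, and the norm bound $1\le \lvert N_{\QQ(\lambda)/\QQ}(\lambda)\rvert\le \lambda^{[\QQ(\lambda):\QQ]}$; the reduction of \ref{weakPerron1}$\Rightarrow$\ref{weakPerron4} to the statement that maximal-modulus conjugates are $\lambda$ times roots of unity; and the block-cyclic matrix with identity superdiagonal blocks and $N$ in the corner, which correctly converts an irreducible realisation of $\lambda^m$ into an irreducible realisation of $\lambda$ (its $m$-th power is block diagonal with blocks $N$, and strong connectivity of the associated digraph follows from that of $N$). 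However, the two genuinely hard ingredients remain black boxes in your write-up exactly where they are citations in the paper: the root-of-unity lemma (which is, in substance, the content of \ref{weakPerron1}$\Leftrightarrow$\ref{weakPerron4} as proved in \cite{Schinzel,Brunotte} -- your sketch via an auxiliary algebraic integer and Kronecker's theorem is the right kind of argument but is not carried out), and the hard direction of Lind's theorem that every Perron number is the spectral radius of an irreducible (indeed primitive) non-negative integral matrix. So your route buys a cleaner logical structure and some self-contained elementary steps, while the paper's buys brevity; neither is self-contained, and both ultimately rest on \cite{Lind} plus one classical lemma on weak Perron numbers. One minor point: your norm argument for $\lambda\ge 1$ tacitly assumes $\lambda\neq 0$, i.e.\ that the matrix in \ref{weakPerron2} is not nilpotent (e.g.\ $\rho\left(\begin{smallmatrix}0&1\\0&0\end{smallmatrix}\right)=0$); this degenerate case is glossed over in the statement and in the paper's citation-proof as well, so it is not a defect specific to your argument, but it deserves a word if the cycle is written out in full.
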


\begin{proof}
The equivalence between \ref{weakPerron1} and \ref{weakPerron3} follows from \cite[Theorem 3, page 291]{Lind}, and the equivalence between \ref{weakPerron2} and \ref{weakPerron3} follows from Lemma~\ref{Lemm:IrredRedMatrix}. 
The equivalence between \ref{weakPerron1} and \ref{weakPerron4} can be found for 
instance in \cite[Lemma 4]{Schinzel} or \cite[Theorem 2]{Brunotte}.
\end{proof}

As a consequence of Corollary~\ref{Coro:Computelambda} and of the Perron-Frobenius theorem, we obtain the following result (which is classical, see for instance \cite{FavreWulcan,Lin}):
\begin{corollary}\label{Coro:MonomialEndo}
For each matrix  $M=(m_{i,j})_{i,j=1}^n\in \Mat_n(\NN)$ and for each
$(\alpha_1,\ldots,\alpha_n)\in (\k^*)^n$, the monomial endomorphism
\[
	f_M=(\alpha_1 x_1^{m_{1,1}}\cdots x_n^{m_{1,n}},\cdots, \alpha_n x_1^{m_{n,1}}\cdots x_n^{m_{n,n}})
	\in \End(\AA^n)
\]
is dominant if and only if $\det(M)\not=0$. In this case,  the dynamical degree of $f_M$ is equal to the spectral radius of $M$:
\[\lambda(f_M)=\rho(M)\in \r_{\ge 1}.\]
\end{corollary}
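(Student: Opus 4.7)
The overall strategy is to produce a maximal eigenvector of $f_M$ via the Perron--Frobenius theorem and then apply Proposition~\ref{Prop:MonomialEig}\ref{Prop:MonomialEig3}\ref{MonomialEig3i}. For the dominance part, I would use that the monomials $f_i=\alpha_i\prod_j x_j^{m_{i,j}}$ are algebraically independent over $\k$ if and only if the rows of $M$ are $\QQ$-linearly independent (any $\QQ$-linear relation between rows clears to a monomial identity among the $f_i$ by balancing positive and negative exponents), hence $f_M$ is dominant if and only if $\det(M)\neq 0$. Alternatively one could compute the Jacobian directly, factoring it as a product of diagonal matrices with $M$ in the middle.

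From now on assume $\det(M)\neq 0$. A short computation shows that composition of monomial endomorphisms corresponds, up to componentwise multiplication by elements of $\k^*$, to matrix multiplication; in particular $f_M^r$ agrees on each component with $f_{M^r}$ up to a nonzero scalar, for every $r\ge 1$. Consequently $M$ is the unique matrix contained in $f_M$, and the maximal eigenvalue of $f_M$ in the sense of Definition~\ref{Defi:Matrixcontained} equals $\rho(M)$. Next I would apply Theorem~\ref{Thm:PerronFrobenius} to $M$ to obtain $\mu=(\mu_1,\ldots,\mu_n)\in(\RR_{\ge 0})^n\setminus\{0\}$ with $M\mu=\rho(M)\mu$; then $\deg_\mu(f_i)=(M\mu)_i=\rho(M)\mu_i$ for every $i$, so $\mu$ is a maximal eigenvector and $\theta:=\deg_\mu(f_M)=\rho(M)$.

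It then remains to verify $\mu$-algebraic stability, since Proposition~\ref{Prop:MonomialEig}\ref{Prop:MonomialEig3}\ref{MonomialEig3i} will then yield $\lambda(f_M)=\theta=\rho(M)$, while Proposition~\ref{Prop:MonomialEig}\ref{Prop:MonomialEig2} gives $\rho(M)\ge 1$. Iterating the eigen-equation gives $M^r\mu=\rho(M)^r\mu$, so the $i$-th component $(f_M^r)_i$ has $\mu$-degree $(M^r\mu)_i=\rho(M)^r\mu_i$. The main point requiring care -- and the only real obstacle -- is the treatment of indices $i$ with $\mu_i=0$, since Perron--Frobenius does not yield a strictly positive eigenvector when $M$ is reducible, so Proposition~\ref{Prop:Dynamicaldegreeestimate} alone would not suffice. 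Here I would exploit that $(M^r\mu)_i=0$ together with $M^r\ge 0$ entrywise forces $(M^r)_{i,j}=0$ whenever $\mu_j>0$, so $(f_M^r)_i\in\k[x_j:\mu_j=0]$ and contributes trivially to $\deg_\mu(f_M^r)$. Combining both cases gives $\deg_\mu(f_M^r)=\rho(M)^r=\deg_\mu(f_M)^r$ for every $r\ge 1$, which is precisely $\mu$-algebraic stability.
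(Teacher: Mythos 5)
Your proposal is correct, and the core computation is the same as the paper's: take a Perron--Frobenius eigenvector $\mu\ge 0$ of $M$, use $f_M^r=f_{M^r}$ (up to scalars) to get $\deg_\mu((f_M^r)_i)=(M^r\mu)_i=\rho(M)^r\mu_i$, and conclude $\mu$-algebraic stability. The two proofs diverge in two places. For dominance, the paper restricts $f_M$ to the torus $(\AA^1\setminus\{0\})^n$ and argues via Laurent polynomials (a kernel vector of $\tr M$ gives a non-constant invariant function when $\det M=0$; surjectivity on $\overline{\k}$-points when $\det M\neq 0$), while you use the standard equivalence between algebraic independence of monomials and $\QQ$-linear independence of their exponent vectors -- equivalent content, though your parenthetical only justifies the ``dependent $\Rightarrow$ not dominant'' direction, and your Jacobian alternative should be dropped: over a field of positive characteristic $p$, the Jacobian determinant vanishes whenever $p\mid\det(M)$ even though $f_M$ is dominant, so it does not detect dominance here. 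For the conclusion, the paper does not invoke Proposition~\ref{Prop:MonomialEig} (whose proof appears only later in the same section); instead it conjugates so that the zero coordinates of $\mu$ come first, observes via Corollary~\ref{Coro:Computelambda} that $M$ is block-triangular with an invertible block $\hat M$, and runs an induction on $n$ to control $\lambda(f_{\hat M})\le\rho(M)$. You instead cite Proposition~\ref{Prop:MonomialEig}\ref{Prop:MonomialEig3}\ref{MonomialEig3i}, which packages exactly that induction; this is logically sound, since the proof of Proposition~\ref{Prop:MonomialEig} and all of its ingredients (Proposition~\ref{prop.StepOne}, Corollary~\ref{Cor:rhosmallerthandeg}, Lemma~\ref{Lemma:Dynamicaldegreeestimate}, Lemma~\ref{lem.DinhNGuyen}) are independent of Corollary~\ref{Coro:MonomialEndo}, but it inverts the paper's order of presentation. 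Your approach buys brevity at the end; the paper's buys a self-contained statement available before Proposition~\ref{Prop:MonomialEig} is established.
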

\begin{proof}
Note that the endomorphism $f_M \in \End(\AA^n)$ 
restricts to an endomorphism $h_M \in \End((\AA^1 \setminus \{ 0 \})^n)$.

If $\det(M)=0$, any non-zero element of the kernel of the transpose of $M$ 
gives rise to a non-constant element $p$ in the Laurent polynomial ring $\k[x_1^{\pm}, \ldots, x_n^{\pm}]$ 
such that  $p \circ h_M$ is constant, so $h_M$ and thus $f_M$ is not dominant.
We then assume that $\det(M)\not=0$. 
This implies that $h_M \in \End((\AA^1 \setminus \{0\})^n)$
is surjective on $\overline{\k}$-points and thus $f_M$ is dominant. In particular, $\lambda(f_M) \geq 1$.
Thus we only have to show that
$\lambda(f_M)=\rho(M)$.
By the Perron-Frobenius-Theorem (Theorem~\ref{Thm:PerronFrobenius}), there exists an eigenvector 
$\mu\in (\mu_1,\ldots,\mu_n)\in (\r_{\ge 0})^n$ of $M$ to the eigenvalue $\rho(M)$.
Since the spectral radius of $M$ and the dynamical degree of $f_M$ do not change if we conjugate $M$
with a permutation matrix, we may assume that there is $m < n$ 
such that $\mu_1 = \ldots = \mu_m = 0$
and $\mu_i > 0$ for each $i \geq m+1$. Since $(f_M)^r = f_{M^r}$ we get for each $r \geq 1$ and each
$i \in \{1, \ldots, n\}$ that $\deg_\mu(((f_M)^r)_i) = (M^r \mu)_i = \rho(M)^r \mu_i$.
This implies that $\deg_\mu((f_M)^r)=\rho(M)^r$ for each $r \geq 1$. Thus 
$f_M$ is $\mu$-algebraically stable and $\deg_\mu(f_M) = \rho(M) < \infty$. By Corollary~\ref{Coro:Computelambda}\ref{fmEndAm}, we may write
\[
	M = 
	\left(
		\begin{array}{c|c}
			\hat{M}	& 0 \\
			\hline
			\ast & \ast
		\end{array}
	\right)
\]
where $\hat{M} \in \Mat_m(\NN)$ with $\det(\hat{M}) \neq 0$. By induction,
the endomorphism $f_{\hat{M}} \in \End(\AA^m)$ satisfies 
$\lambda(f_{\hat{M}}) =  \rho(\hat{M}) \leq \rho(M)$. By Corollary~\ref{Coro:Computelambda}\ref{lambdhatfequal},\ref{lambdhatfsmall} we get then
$\lambda(f_M) = \deg_\mu(f_M) = \rho(M)$.
\end{proof}

\begin{corollary}\label{Cor:rhosmallerthandeg}
For each endomorphism $f\in \End(\A^n)$  and each matrix $M\in \Mat_n(\NN)$ that is contained in $f$, we have $ \rho(M)\le \deg(f)$.
\end{corollary}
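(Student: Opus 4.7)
The plan is to bound $\rho(M)$ through the degree function $\deg_\mu$ associated to a Perron--Frobenius eigenvector $\mu$ of $M$. This is cleaner than trying to invoke Corollary~\ref{Coro:MonomialEndo} directly, which would require $\det(M)\neq 0$ in order to talk about the dynamical degree of the monomial map $f_M$.

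First I would apply the Perron--Frobenius Theorem (Theorem~\ref{Thm:PerronFrobenius}) to the non-negative matrix $M\in \Mat_n(\NN)$ to produce an eigenvector $\mu=(\mu_1,\ldots,\mu_n)\in (\r_{\ge 0})^n\setminus\{0\}$ with $M\mu=\rho(M)\mu$. Next, since $M=(m_{i,j})$ is contained in $f$, for each $i\in\{1,\ldots,n\}$ the monomial $\prod_{j=1}^n x_j^{m_{i,j}}$ appears with non-zero coefficient in $f_i$, so by the definition of the degree function,
\[
\deg_\mu(f_i)\ge \sum_{j=1}^n m_{i,j}\mu_j=(M\mu)_i=\rho(M)\,\mu_i.
\]

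Then I would compare $\deg_\mu$ with the classical degree via the elementary bound $\deg_\mu(p)\le \mu_{\max}\cdot \deg(p)$, where $\mu_{\max}=\max_j\mu_j$, valid for every non-zero polynomial $p$ (each monomial $\prod_j x_j^{\alpha_j}$ in $p$ contributes $\sum_j\alpha_j\mu_j\le \mu_{\max}\sum_j\alpha_j\le \mu_{\max}\deg(p)$). Combining with the previous inequality yields $\rho(M)\mu_i\le \mu_{\max}\deg(f_i)\le \mu_{\max}\deg(f)$ for every $i$. Choosing $i$ with $\mu_i=\mu_{\max}$, which is strictly positive since $\mu\neq 0$, and dividing, gives $\rho(M)\le \deg(f)$.

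There is no real obstacle here: the only thing one should be slightly careful about is that $M$ may fail to be irreducible and $\mu$ can have zero entries, but this is irrelevant because the argument only uses the existence of one index with $\mu_i=\mu_{\max}>0$, which Perron--Frobenius guarantees in the reducible case as well.
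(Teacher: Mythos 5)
Your proof is correct and follows essentially the same route as the paper: a Perron--Frobenius eigenvector $\mu\ge 0$ of $M$ for the eigenvalue $\rho(M)$, combined with the observation that the monomial of $f_i$ corresponding to the $i$-th row of $M$ bounds things, evaluated at an index where $\mu_i$ is maximal (and hence positive). The paper carries out exactly this comparison directly at that single index, whereas you phrase it through $\deg_\mu$ and the bound $\deg_\mu(p)\le \mu_{\max}\deg(p)$, but this is only a repackaging of the same inequality.
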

\begin{proof}
By the Perron-Frobenius-Theorem (Theorem~\ref{Thm:PerronFrobenius}), there exists an eigenvector $\mu=(\mu_1,\ldots,\mu_n)\in (\r_{\ge 0})^n$ of $M$ to the eigenvalue $\rho(M)$. Hence, $\sum_{j=1}^n m_{i,j}\mu_j=\rho(M)\mu_i$ for each $i\in\{1,\ldots,n\}$. By choosing
an integer $r\in \{1,\ldots,n\}$ such that $\mu_r=\max\{\mu_1,\ldots,\mu_n\}$, we obtain 
\[\rho(M)\mu_r=\sum_{j=1}^n m_{r,j}\mu_j\le \mu_r\sum_{j=1}^n m_{r,j}.\]
The coefficient of the monomial $\prod_{j=1}^n x_j^{m_{r,j}}$ in $f_r$ is nonzero (as $M$ is contained in $f$, see Definition~\ref{Defi:Matrixcontained}). This monomial has degree $\sum_{j=1}^n m_{r,j}$, so $\deg(f)\ge \sum_{j=1}^n m_{r,j}$. As $\mu_r>0$, this gives $\rho(M)\le \deg(f)$.
\end{proof}

In the following we will use the next basic property of Handelman numbers. It 
is a straightforward application of Descarte's Rule of Signs, see e.g.~\cite[p.91]{St1986A-source-book-in-m}:

\begin{lemma}[Basic property of Handelman numbers]\label{Lemm:Handelman}
	Let $n \geq 1$. For each $(a_0,\ldots,a_{n-1})\in (\r_{\ge 0})^n\setminus \{0\}$, the polynomial
	$x^n-\sum_{i=0}^{n-1} a_i x^i\in \r[x]$
	has a unique positive real root. In particular, a Handelman number has no other 
	positive real Galois conjugate.
\end{lemma}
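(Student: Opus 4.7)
The plan is to apply Descartes' Rule of Signs to the polynomial $P(x) = x^n - \sum_{i=0}^{n-1} a_i x^i \in \mathbb{R}[x]$.

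First, I would count the sign changes in the coefficient sequence $(1, -a_{n-1}, -a_{n-2}, \ldots, -a_1, -a_0)$ after deleting zero entries. Since the hypothesis $(a_0, \ldots, a_{n-1}) \in (\mathbb{R}_{\geq 0})^n \setminus \{0\}$ guarantees that at least one $a_i$ is strictly positive, the nonzero terms begin with a single $+1$ followed only by strictly negative numbers. Hence there is exactly one sign change. By Descartes' Rule of Signs, the number of positive real roots of $P$, counted with multiplicity, is at most $1$ and differs from $1$ by an even integer; it is therefore $0$ or $1$.

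Next I would show the existence of a positive real root so that the count is $1$. Let $k$ be the smallest index with $a_k > 0$, and factor $P(x) = x^k Q(x)$ with $Q(x) = x^{n-k} - \sum_{i=k}^{n-1} a_i x^{i-k}$. Then $Q(0) = -a_k < 0$ while $Q(x) \to +\infty$ as $x \to +\infty$, so by the intermediate value theorem $Q$ has at least one positive real root, and so does $P$. Combined with the previous paragraph, $P$ admits exactly one positive real root (in particular a simple one).

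For the ``In particular'' assertion, suppose $\lambda$ is a Handelman number, so $\lambda$ is a positive real root of some monic integral polynomial $P$ of the above form. Its minimal polynomial over $\mathbb{Q}$ divides $P$, so every Galois conjugate of $\lambda$ is a root of $P$. Since $P$ has a unique positive real root, namely $\lambda$ itself, no other Galois conjugate of $\lambda$ can be a positive real number. There is no substantial obstacle here; the only mild care needed is separating the case $a_0 = 0$ via the factorisation $P(x) = x^k Q(x)$ in order to apply the intermediate value theorem.
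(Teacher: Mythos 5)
Your proposal is correct and follows essentially the same route as the paper, which simply invokes Descartes' Rule of Signs for this lemma; your write-up merely spells out the sign-change count, the existence step via the intermediate value theorem (handling $a_0=0$ by factoring out $x^k$), and the passage to Galois conjugates through the minimal polynomial. No gaps.
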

\begin{corollary}
	\label{Cor:Handelman_implies_weak_Perron}
	Each Handelman number is a weak Perron number.
\end{corollary}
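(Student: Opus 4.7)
The plan is to unpack the definition of a Handelman number and apply the triangle inequality together with Lemma~\ref{Lemm:Handelman}. Write $\lambda > 0$ as a root of a monic integral polynomial
\[
	Q(t) = t^n - \sum_{i=0}^{n-1} a_i t^i, \quad a_i \in \mathbb{N},
\]
where not all $a_i$ vanish (otherwise $\lambda = 0$). First I would verify that $\lambda \geq 1$: from $\lambda^n = \sum_{i=0}^{n-1} a_i \lambda^i$ and the fact that some $a_j \geq 1$, we get $\lambda^n \geq \lambda^j$, hence $\lambda^{n-j} \geq 1$ with $n - j \geq 1$, so $\lambda \geq 1$. Moreover $\lambda$ is an algebraic integer, since $Q$ is monic with integer coefficients.

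Next I would control the absolute values of the Galois conjugates of $\lambda$. The minimal polynomial of $\lambda$ over $\mathbb{Q}$ divides $Q$ in $\mathbb{Z}[t]$ (by Gauss's Lemma), so every Galois conjugate $\mu \in \mathbb{C}$ of $\lambda$ is also a root of $Q$. From $\mu^n = \sum_{i=0}^{n-1} a_i \mu^i$ and the triangle inequality (using $a_i \geq 0$), I obtain
\[
	\lvert \mu \rvert^n \leq \sum_{i=0}^{n-1} a_i \lvert \mu \rvert^i, \quad \text{i.e.,} \quad Q(\lvert \mu \rvert) \leq 0.
\]

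Finally, I would invoke Lemma~\ref{Lemm:Handelman}, which asserts that $\lambda$ is the unique positive real root of $Q$. Since $Q$ is monic, $Q(t) \to +\infty$ as $t \to +\infty$, and having no other positive real root forces $Q(t) > 0$ for every $t > \lambda$. Combined with $Q(\lvert \mu \rvert) \leq 0$, this yields $\lvert \mu \rvert \leq \lambda$, which is exactly the condition for $\lambda$ to be a weak Perron number.

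There is no real obstacle here: the whole argument is a short deduction from Lemma~\ref{Lemm:Handelman} plus the triangle inequality, and all verifications (algebraic integer, $\lambda \geq 1$, $\lvert \mu \rvert \leq \lambda$) are elementary once the polynomial $Q$ is at hand.
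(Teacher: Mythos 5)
Your proof is correct, and it takes a genuinely different route from the paper's. You verify the definition of a weak Perron number directly: $\lambda$ is an algebraic integer as a root of the monic integral polynomial $Q$; the bound $\lambda\ge 1$ follows from $\lambda^n=\sum a_i\lambda^i$ with some $a_j\ge 1$; Gauss's Lemma puts every Galois conjugate $\mu$ among the roots of $Q$; the triangle inequality (valid because $a_i\ge 0$) gives $Q(\lvert\mu\rvert)\le 0$; and Lemma~\ref{Lemm:Handelman} together with $Q(t)\to+\infty$ forces $Q>0$ on $(\lambda,\infty)$, hence $\lvert\mu\rvert\le\lambda$. The paper instead uses Lemma~\ref{Lemm:Handelman} only to locate the non-$\lambda$ roots of $P$, realizes $P$ as the characteristic polynomial of the companion matrix with non-negative entries, invokes the Perron--Frobenius Theorem (Theorem~\ref{Thm:PerronFrobenius}) to identify $\lambda$ as the spectral radius of that matrix, and then concludes via Lind's characterisation (Theorem~\ref{LindTheorem}). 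Your argument is more elementary and self-contained, needing neither Perron--Frobenius nor Lind, and it makes the inequality $\lambda\ge 1$ explicit; the paper's version is shorter given that both of those results are already set up and used elsewhere in the text.
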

\begin{proof}
	Let $\lambda \in \r_{> 0}$ be a Handelman number. There exists
	$(a_0, \ldots, a_{n-1}) \in \ZZ^n \setminus \{ 0 \}$ such that $\lambda$ is a root
	of $P(x) = x^n - \sum_{i=0}^{n-1} a_i x^i \in \ZZ[x]$. By Lemma~\ref{Lemm:Handelman}, all
	roots of $P$, except $\lambda$, are either non-real or real and non-positive. Since $P$ is the characteristic 
	polynomial of the matrix
	\[
	A = \begin{pmatrix}  
	a_{n-1} & \cdots & a_1 & a_0 \\
	1 & \cdots & 0 & 0  \\
	\vdots & \ddots & \vdots & \vdots  \\
	0 & \cdots & 1 & 0
	\end{pmatrix} \in \Mat_n(\r_{\geq 0}) \,  ,
	\]
	it follows by the Perron-Frobenius-Theorem (Theorem~\ref{Thm:PerronFrobenius}) that the spectral
	radius of $A$ is equal to $\lambda$. This implies that $\lambda$ is a weak Perron number (Theorem~\ref{LindTheorem}).
\end{proof}

\subsection{Sequences of matrices}
To study endomorphisms of $\A^n$, we will need to consider finite sets of elements of $\Mat_n(\r)$ 
that have the property
that we can exchange rows. In order to take the norm on such sets, we will have to see them ordered, and thus see these in $\Mat_n(\r)^N$ for some $N\ge 1$.

\begin{notation}\label{Not:MnN}
	Let $n,N\ge 1$. We denote by $\widehat{\mathcal{M}}_{n,N}\subset \Mat_n(\r)^N$ 
	the $\r$-vector subspace of $N$-tuples $(M_1,\ldots,M_N)$ that have the following property:
	\begin{center}
	{\it For each $i,j\in \{1,\ldots,N\}$ and each
	$l \in \{1,\ldots,n\}$, the replacement of 
	the $l$-th row of $M_i$ with the $l$-th row of $M_j$ gives a matrix which lies in 
	$\{M_1,\ldots,M_N\}$.}
	\end{center}
	
	We then denote by $\mathcal{M}_{n,N}\subset \widehat{\mathcal{M}}_{n,N}$ the subset that consists
	of the 
	$N$-tuples $(M_1,\ldots,M_N)$ where $M_1,\ldots,M_N$ are $N$ distinct matrices with non-negative coefficients.
\end{notation}
	
\begin{remark}
	\label{rem.matrices_contained_in_M}
	If $f\in \End(\A^n)$ is an endomorphism, then there exists 
	some integer $N\ge 1$ and some $N$-tuple
	$(S_1,\ldots,S_N)\in \mathcal{M}_{n,N}$ such that
	$\{S_1,\ldots,S_N\}$ is the set of matrices that are 
	contained in $f$ (as in Definition~\ref{Defi:Matrixcontained}). 
\end{remark}
	
The following two lemmas build the key ingredients for proving the existence
of maximal eigenvectors of endomorphisms of $\AA^n$ in the next subsection (see Proposition~\ref{prop.StepOne}). This
eventually leads then to a proof of Proposition~\ref{Prop:MonomialEig}.

\begin{lemma}
	\label{lem.D_is_dense}
	Let $n,N\ge 1$. For each $M=(M_1,\ldots,M_N)\in  \mathcal{M}_{n,N}$, there exists a sequence ${(D_t)}_{t\in \mathbb{N}}$ of elements $D_t=(D_{t,1},\ldots,D_{t,N})\in \mathcal{M}_{n,N}$ that converges towards $M$ $($with respect to the topology of $\Mat_n(\r)^N$ that is given by the norm as in Definition~$\ref{Defi:NormRn})$ and such that for each $t\in \mathbb{N}$, there is no complex number which is an eigenvalue of two elements of $D_{t,1},\ldots,D_{t,N}$.
\end{lemma}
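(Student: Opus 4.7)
The first step is to exploit the row-exchange property. Writing $R_l \subset \r^n$ for the set of $l$-th rows appearing among $M_1, \ldots, M_N$, one checks that iterating single-row replacements gives $\{M_1, \ldots, M_N\} = R_1 \times R_2 \times \cdots \times R_n$ (viewing an $n \times n$ matrix as the tuple of its rows), and in particular $N = \prod_{l=1}^n \lvert R_l \rvert$. This reduces the problem to independently perturbing the finite collection of rows lying in the $R_l$'s.

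The plan is then to introduce independent perturbation vectors: for each $l \in \{1, \ldots, n\}$ and each $r \in R_l$, choose $\epsilon_{l,r} \in \r^n$, form the new rows $r + \epsilon_{l,r}$, and let $M(\epsilon)$ be the resulting $N$-tuple produced by the same Cartesian product structure. For $\epsilon$ in a sufficiently small neighbourhood $U$ of the origin in the space of perturbations, the perturbed rows in each $R_l$ remain distinct, all entries remain non-negative, and the row-exchange structure is preserved, so $M(\epsilon) \in \mathcal{M}_{n,N}$.

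The key step is to show that, for every pair of distinct matrices $M \neq M'$ in the perturbed tuple, the resultant $\mathrm{Res}(\chi_M, \chi_{M'})$ is a non-zero polynomial in $\epsilon$. Since $M \neq M'$, there exists a row position $l$ at which they use distinct rows $r \neq r'$ of $R_l$, and the vectors $\epsilon_{l,r}, \epsilon_{l,r'}$ are then independent parameters controlling row $l$ of $M$ and of $M'$ respectively. For any root $\beta$ of $\chi_{M'}$, a cofactor expansion of $\det(\beta I - M)$ along row $l$ expresses $\chi_M(\beta)$ as an affine function of the entries of row $l$ of $M$ whose coefficients are, up to sign, the $(n-1) \times (n-1)$ minors of $\beta I - M$ obtained by deleting row $l$; as these minors are themselves non-zero polynomials in the remaining perturbation parameters (they can be made generic by further perturbation of the other rows of $M$), $\chi_M(\beta)$ is not identically zero in $\epsilon$. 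Since $\mathrm{Res}(\chi_M, \chi_{M'}) = \prod_{\beta} \chi_M(\beta)$ with $\beta$ ranging over the roots of $\chi_{M'}$, the resultant is not identically zero either.

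Taking the complement, inside $U$, of the union of the finitely many resultant-vanishing loci (one per pair $M \neq M'$) yields a Zariski open dense subset of $U$; any sequence of perturbations in this subset with norm tending to $0$ produces the desired sequence $D_t \to M$ in $\mathcal{M}_{n,N}$ with the required disjoint-spectra property. The main obstacle I expect is the non-vanishing of the resultants in the key step above, which I would handle via the cofactor-expansion formula combined with the independence of the perturbation parameters sitting in row $l$ of $M$ and $M'$.
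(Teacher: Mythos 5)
Your overall strategy is the same as the paper's: use the row-exchange property to identify $\{M_1,\ldots,M_N\}$ with a product of row sets, parametrize the tuple linearly by the rows, observe that for each pair the common-eigenvalue locus is cut out by the resultant of the two characteristic polynomials, and then choose parameters off the finitely many vanishing loci. The gap is in the decisive step, the non-vanishing of $\operatorname{Res}(\chi_M,\chi_{M'})$ as a polynomial in the perturbation parameters. Expanding $\det(\beta I-M)$ along row $l$ does give an affine function of that row, but its coefficients \emph{and} its constant term are (up to sign and a factor $\beta$) the $(n-1)\times(n-1)$ minors of $\beta I-M$ with row $l$ deleted, and these minors are evaluated at $\beta$, which is a root of $\chi_{M'}$ and hence a function of precisely the ``remaining'' parameters you propose to perturb (the rows other than row $l$ are shared, or at least simultaneously perturbed, between $M$ and $M'$). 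So you cannot hold $\beta$ fixed while making the minors generic; the argument is circular. Moreover the affine function can genuinely be identically zero in row $l$: if, say, the rows of $M$ other than row $l$ are zero and $M'$ is singular, then $0$ is a common eigenvalue of $M$ and $M'$ for \emph{every} choice of row $l$. What is actually needed — and suffices, since a polynomial not vanishing at one point is non-zero — is a single explicit specialization of the rows at which the two matrices have disjoint spectra. This is exactly how the paper closes the step: after reducing to $l=1$ by a permutation, it specializes the rows so that $M$ and $M'$ become the two companion-type matrices with characteristic polynomials $t^n-1$ and $t^n$, which differ only in that row and share no eigenvalue. Without such a witness your key claim is unproved.

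A smaller point: the assertion that ``all entries remain non-negative'' for every sufficiently small perturbation is false whenever some entry of some row is $0$ (the typical case for the matrices arising in this paper). You must restrict the perturbations to the non-negative orthant, or, as the paper does, move along an affine line through the original tuple with non-negative direction. This is harmless for the genericity argument, since a non-zero polynomial cannot vanish identically on a set with non-empty interior (or on a suitably chosen line), but it needs to be said; combined with the fixed key step above, your construction would then produce the desired sequence $D_t\in\mathcal{M}_{n,N}$.
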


\begin{proof}
	The result being trivially true for $N=1$, we will assume $N\ge 2$.
For each $i\in \{1,\ldots,n\}$, we denote by 
 $\Gamma_i \subset \r^n$ the finite set of $i$-th rows of the matrices $M_1,\ldots,M_N$:
\[ \Gamma_i=\{r\in \r^n \mid r\text{ is the $i$-th row of one of the matrices }M_1,\ldots,M_N\}.\]
We then write  $\Gamma_i=\{r_{i,1},\ldots,r_{i,s_i}\}$, where $s_i\ge 1$ is the cardinality of  $\Gamma_i$.

As all matrices $M_1,\ldots,M_N$ are pairwise distinct and as one can ``exchange rows'' (see Notation~\ref{Not:MnN}), we have $N=s_1\cdot \cdots s_n$, and obtain a unique $\r$-linear map
\[
	\varphi\colon \prod_{i=1}^n (\r^n)^{s_i}\to \widehat{\mathcal{M}}_{n,N}
\]
with the following properties:
\begin{enumerate}
\item[$(1)$]
For each $k \in \{1, \ldots, N \}$, the composition of $\varphi$ with 
the projection map $\pi_k \colon \Mat_n(\r)^N\to \Mat_n(\r)$ 
onto the $k$-th factor is of the form
\[\begin{array}{rccc}
\pi_k\circ \varphi\colon& \prod_{i=1}^n (\r^n)^{s_i}&\to& \Mat_n(\r)\\
&(v_{i,j})_{1\le i\le n, 1\le j\le s_i} & \mapsto &
\begin{pmatrix}
			v_{1, j_1} \\
			\vdots \\
			v_{n, j_n}
		\end{pmatrix}
		\end{array}\]
		where $j_i\in \{1,\ldots,s_i\}$ for each $i\in \{1,\ldots,n\}$.
\item[$(2)$]
$(M_1,\ldots,M_N)=\varphi((r_{i,j})_{1\le i\le n, 1\le j\le s_i})$.
\end{enumerate}
Indeed, the possibilities for maps $\pi_k \circ \varphi$ as in $(1)$ are parametrised by the $N$ possible choices of $j_i\in \{1,\ldots,s_i\}$ for each $i\in \{1,\ldots,n\}$, and by $(2)$ the image of $(r_{i,j})_{1\le i\le n, 1\le j\le s_i}$ by the maps $\pi_1 \circ \varphi, \ldots, \pi_N \circ \varphi$
give the matrices $M_1,\ldots,M_N$; this gives the existence and the unicity of $\varphi$.

We now identify $\prod_{i=1}^n (\r^n)^{s_i}$  with the real locus $X(\r)$ of the affine space $X=\A^{n\sum s_i}$.

For any two matrices $A,B\in \Mat_n(\r)$, the resultant of the characteristic polynomials $\chi_A$ and $\chi_B$ is denoted by $r(A,B)$. Recall that $r(A, B) = 0$ if and only if $A$ and $B$ have a common eigenvalue. Hence, for any distinct $a,b \in \{1,\ldots,N\}$, the set 
\[
	Z_{a,b}=\left.\left\{x\in \prod_{i=1}^n (\r^n)^{s_i} \right| \begin{array}{l}\text{the matrices }\pi_{a}(\varphi(x))\text{ and }\pi_{b}(\varphi(x))\\
	\text{have a common eigenvalue}\end{array}\right\}
\]
corresponds to the  elements of $X(\RR)$ that satisfy 
one polynomial equation $P_{a,b}\in \r[X]$. 

We now prove that $P_{a,b}\not=0$, or equivalently that $Z_{a,b}\not=
X(\r)=\prod_{i=1}^n (\r^n)^{s_i}$, by showing that 
$\pi_{a}(\varphi(x))$ and $\pi_{b}(\varphi(x))$ have no common eigenvalue for at least one $x\in X(\r)$. 
We consider $j_1,\ldots,j_n$ and $j_1',\ldots,j_n'$ so that $\pi_a\circ \varphi$ and $\pi_b\circ \varphi$ are respectively given by
\[\begin{array}{ccc}
 \prod_{i=1}^n (\r^n)^{s_i}&\to& \Mat_n(\r)\\
(v_{i,j})_{\scriptsize\begin{array}{l} 1\le i\le n\\ 1\le j\le s_i\end{array}} & \mapsto &
\begin{pmatrix}
			v_{1, j_1} \\
			\vdots \\
			v_{n, j_n}
		\end{pmatrix}
		\end{array}
	\qquad \textrm{and} \qquad	
		\begin{array}{ccc}
 \prod_{i=1}^n (\r^n)^{s_i}&\to& \Mat_n(\r)\\
(v_{i,j})_{\scriptsize\begin{array}{l} 1\le i\le n\\ 1\le j\le s_i\end{array}} & \mapsto &
\begin{pmatrix}
			v_{1, j_1'} \\
			\vdots \\
			v_{n, j_n'}
		\end{pmatrix}
		\end{array} \, .
\]
Since the matrices $M_a$ and $M_b$ are distinct, 
the linear maps $\pi_a \circ \varphi$ and $\pi_b\circ \varphi$ are also distinct. There is thus  $l\in \{1,\ldots,n\}$ such that $j_l\not=j_l'$.
		Suppose first that $l=1$, i.e.~$j_1\not=j_1'$. 
		We may choose $x\in X(\r)$ such that 
\[
\pi_{a}(\varphi(x))=\left(\begin{array}{c|c}
0 & 1 \\
\hline
I_{n-1} & 0
\end{array}\right) \quad \textrm{and} \quad \pi_{b}(\varphi(x))=\left(\begin{array}{c|c}
0 & 0 \\
\hline
I_{n-1} & 0
\end{array}\right) \, .
\]
 These matrices have characteristic polynomials $t^n-1$ and  $t^n$, respectively. 
If $l>1$, we simply consider conjugation of the above matrices by permutations. In all cases, we find an
$x\in X(\r)$ such that  $\pi_{{a}}(\varphi(x))$ and $\pi_{{b}}(\varphi(x))$ are matrices without common eigenvalue in $\mathbb{C}$. This shows that $Z_{a,b} \neq X(\RR)$, i.e.~$P_{a,b} \neq 0$.

The product of all polynomials $P_{a,b}$ with distinct $a,b \in \{1,\ldots,n\}$ gives a non-zero 
polynomial $P\in \r[X]$.
We can thus  take a real affine linear map $\ell\colon \A^1 \to X =\A^{n\sum s_i}$ such that $\ell(0)=(r_{i,j})_{1\le i\le n, 1\le j\le s_i}$, such that the coordinates of 
$\ell(\r_{\ge 0})$ are non-negative and such that the restriction of 
$P$ to $\ell(\r)$ is non-zero. 
We obtain that $P(\ell(\frac{1}{n})) \neq 0$ for any sufficiently large 
positive integer $n$. It suffices then to fix a sufficiently large $c\ge 1$ and 
to define $D_t=\varphi(\ell(\frac{1}{t+c}))$ for each integer $t\ge 0$.
\end{proof}

\begin{lemma}
	\label{lem.distincteg.thenok}
	Let $S = (S_1, \ldots, S_N) \in \mathcal{M}_{n,N}$ and let $v\geq 0$ be an 
	eigenvector of
	$S_1$ to the eigenvalue $\lambda \geq 0$.
	Suppose moreover that $\lambda>\rho(S_i)$ for each $i\in \{2,\ldots,N\}$.
	Then $S_i v \leq \lambda v$ for each $i\in \{1,\ldots,N\}$.
\end{lemma}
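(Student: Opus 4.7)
\medskip

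\noindent\textbf{Proof plan.}
The plan is to use the row-exchange property of $\mathcal{M}_{n,N}$ (as already exploited in the proof of Lemma~\ref{lem.D_is_dense}) in order to reduce everything to applying the spectral-radius bound $\lambda > \rho(S_i)$ against a single well-chosen matrix. The trivial case $N=1$ being clear, I may assume $N\ge 2$; then the hypothesis $\lambda>\rho(S_i)\ge 0$ for $i\ge 2$ forces $\lambda>0$.

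First, I would unpack the row-exchange property. For each $l\in\{1,\ldots,n\}$ let $\Gamma_l\subset(\r_{\ge 0})^n$ be the set of $l$-th rows of the matrices $S_1,\ldots,S_N$; the exchange axiom and the fact that the $S_i$ are pairwise distinct imply that
\[
\{S_1,\ldots,S_N\}=\bigl\{\,(r_1,\ldots,r_n)^{\!\top}\ \bigl|\ r_l\in\Gamma_l\ \text{for each }l\,\bigr\},
\]
so a matrix with non-negative entries lies in $\{S_1,\ldots,S_N\}$ as soon as each of its rows belongs to the corresponding $\Gamma_l$. Writing $S_1=(r^{(1)}_1,\ldots,r^{(1)}_n)^{\!\top}$, the eigenvector equation $S_1v=\lambda v$ says that $\langle r^{(1)}_l,v\rangle=\lambda v_l$ for each $l$.

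Next, I argue by contradiction: suppose some $S_i$ violates the conclusion, i.e.\ there exist $l_0\in\{1,\ldots,n\}$ and a row $r_0\in\Gamma_{l_0}$ (the $l_0$-th row of $S_i$) with $\langle r_0,v\rangle > \lambda v_{l_0}$. Define a new matrix $M^\star$ by keeping the rows of $S_1$ in all positions $l\neq l_0$ and putting $r_0$ in position $l_0$. By the description above, $M^\star\in\{S_1,\ldots,S_N\}$; moreover $M^\star\neq S_1$ because the strict inequality forces $r_0\neq r^{(1)}_{l_0}$. Hence $M^\star=S_j$ for some $j\ge 2$, so $\rho(M^\star)<\lambda$. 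A direct computation gives
\[
M^\star v=\lambda v+\bigl(\langle r_0,v\rangle-\lambda v_{l_0}\bigr)\,e_{l_0}\ \ge\ \lambda v\ \ge\ 0.
\]

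The last step is to iterate. Since $M^\star\ge 0$ and $M^\star v\ge\lambda v\ge 0$, a straightforward induction on $k$ yields $(M^\star)^kv\ge \lambda^k v\ge 0$, and the monotonicity of the Euclidean norm on $(\r_{\ge 0})^n$ (Definition~\ref{Defi:NormRn}) gives $\|(M^\star)^kv\|\ge \lambda^k\|v\|$. Combined with $\|(M^\star)^kv\|\le\|(M^\star)^k\|\cdot\|v\|$ and $v\neq 0$, we obtain $\lambda^k\le\|(M^\star)^k\|$ for every $k\ge 1$, so taking $k$-th roots and letting $k\to\infty$ (Definition~\ref{Defi:NormRn}\ref{Defi:NormRn:Spectralradius}) yields $\lambda\le\rho(M^\star)$, contradicting $\rho(M^\star)<\lambda$. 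The expected main obstacle is merely the bookkeeping of the exchange property, which is precisely the point that makes constructing $M^\star$ legal; once $M^\star$ is in hand the rest is a standard Perron-type iteration.
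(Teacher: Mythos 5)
Your proof is correct and follows essentially the same strategy as the paper: use the row-exchange property of $\mathcal{M}_{n,N}$ to produce a matrix $M\in\{S_2,\ldots,S_N\}$ with $Mv\ge\lambda v\ge 0$, then iterate to force $\rho(M)\ge\lambda$, a contradiction. The only cosmetic difference is the direction of the swap: the paper replaces the non-offending rows of $S_i$ by the corresponding rows of $S_1$, whereas you replace a single row of $S_1$ by an offending row of $S_i$; both land on the same contradiction.
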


\begin{proof}
	Assume for contradiction that there is $i\in \{2,\ldots,N\}$ such that  $S_i v \not\leq \lambda v$. 
	Denote by $v_j$ the $j$-th component of $v$ for each 
	$j \in \{1, \ldots, n\}$.
	Since we may replace each row $R_j$ in $S_i$ such that $R_j v < \lambda v_j$
	with the $j$-th row from
	$S_1$ and still get an element in $\{S_1, \ldots, S_N\}$, 
	we may assume that $S_i v \geq \lambda v \geq 0$. As the coefficients of $v$ and $S_i$ are non-negative, we obtain by induction that $(S_i)^r v \geq \lambda^r v \geq 0$ for each $r\ge 1$. In particular,
	\[
		\lVert (S_i)^r\rVert \geq \frac{\lVert (S_i)^r v \rVert}{\lVert v  \rVert} \geq \lambda^r
	\]
	and we obtain  $\rho(S_i)=\lim_{r\to \infty} \lVert (S_i)^r \rVert^{1/r} \geq \lambda$. This
	contradicts the assumption that $\lambda>\rho(S_i)$.
\end{proof}
\subsection{Existence of maximal eigenvectors of  endomorphisms of $\AA^n$}
\begin{proposition}
	\label{prop.StepOne}
	For each $n,N\ge 1$ and each $S=(S_1,\ldots,S_N) \in \mathcal{M}_{n,N}$, there exists $j \in \{1,\ldots,N\}$ and
	an eigenvector $v \in (\r_{\geq 0})^n \setminus \{ 0 \}$ of $S_{j}$ to the eigenvalue 
	$\lambda=\max \{\rho(S_1),\ldots,\rho(S_N)\}$ such that for each $i \in \{1,\ldots,N\}$ we have
	\[
		S_i v \leq S_{j} v= \lambda v  \, .
	\]
\end{proposition}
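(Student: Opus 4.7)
The plan is to reduce to the generic case using the density Lemma~\ref{lem.D_is_dense} and then pass to the limit. The case that is essentially handled already is when the maximum of the spectral radii is strictly attained at a single $S_j$: in that case Perron--Frobenius (Theorem~\ref{Thm:PerronFrobenius}) provides an eigenvector $v \in (\RR_{\geq 0})^n \setminus \{0\}$ of $S_j$ to the eigenvalue $\rho(S_j) = \lambda$, and Lemma~\ref{lem.distincteg.thenok} immediately yields $S_i v \leq \lambda v = S_j v$ for every $i$. So the whole proof is about producing, by a limiting process, a configuration that behaves like this generic case.

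First I would apply Lemma~\ref{lem.D_is_dense} to obtain a sequence $D_t = (D_{t,1}, \ldots, D_{t,N}) \in \mathcal{M}_{n,N}$ converging to $S$ such that, for each $t$, no complex number is an eigenvalue of two different matrices among $D_{t,1}, \ldots, D_{t,N}$. In particular, the real numbers $\rho(D_{t,1}), \ldots, \rho(D_{t,N})$ are pairwise distinct (they are eigenvalues thanks to the Perron--Frobenius Theorem applied to non-negative matrices), so there is a unique index $j_t \in \{1, \ldots, N\}$ with $\rho(D_{t,j_t}) > \rho(D_{t,i})$ for all $i \neq j_t$. Perron--Frobenius then gives an eigenvector $v_t \in (\RR_{\geq 0})^n$ of $D_{t,j_t}$ to the eigenvalue $\rho(D_{t,j_t})$, which we normalise to $\lVert v_t \rVert = 1$, and Lemma~\ref{lem.distincteg.thenok} yields
\[
    D_{t,i} v_t \;\leq\; D_{t,j_t} v_t \;=\; \rho(D_{t,j_t}) v_t \qquad \text{for each } i \in \{1, \ldots, N\}.
\]

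Next I would extract a convergent subsequence. Since $j_t$ takes values in the finite set $\{1, \ldots, N\}$, by pigeonhole there is some $j$ attained infinitely often, and along that subsequence the sequence $(v_t)$ lies in the compact set $\{v \in (\RR_{\geq 0})^n \mid \lVert v \rVert = 1\}$, so we may further extract a subsequence with $v_t \to v$. Then $v \in (\RR_{\geq 0})^n$ with $\lVert v \rVert = 1$, so $v \neq 0$. The spectral radius is a continuous function of matrix entries (the roots of the characteristic polynomial depend continuously on the coefficients), hence $\rho(D_{t,i}) \to \rho(S_i)$ for each $i$. In particular, $\rho(D_{t,j}) \geq \rho(D_{t,i})$ along the subsequence passes to $\rho(S_j) \geq \rho(S_i)$ for every $i$, so $\rho(S_j) = \lambda$. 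Passing to the limit in the inequalities displayed above yields $S_i v \leq S_j v = \lambda v$ for every $i$, and in particular $v$ is an eigenvector of $S_j$ to the eigenvalue $\lambda$.

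The main subtlety I expect is making sure the pigeonhole/compactness argument produces a single pair $(j, v)$ in the limit; concretely, one must extract the subsequence in the right order (first stabilise $j_t = j$, then use compactness of the unit sphere in the non-negative orthant to get a limit $v$) and invoke continuity of matrix multiplication and of the spectral radius to transport the inequality and the eigenvalue equation through the limit. Everything else is a direct combination of Perron--Frobenius with the two preparatory Lemmas~\ref{lem.D_is_dense} and~\ref{lem.distincteg.thenok}.
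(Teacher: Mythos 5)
Your proposal is correct and follows essentially the same route as the paper: approximate $S$ by the generic tuples $D_t$ from Lemma~\ref{lem.D_is_dense}, fix the maximizing index $j$ by a subsequence argument, combine Perron--Frobenius with Lemma~\ref{lem.distincteg.thenok} to get the inequality at each stage $t$, normalize and extract a convergent subsequence on the unit sphere of the non-negative orthant, and pass to the limit using continuity of the spectral radius. The only cosmetic difference is that you apply Lemma~\ref{lem.distincteg.thenok} for every $t$ before stabilizing $j_t$, whereas the paper first stabilizes $j$ and then applies the lemma; the content is identical.
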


\begin{proof}
	Let $S = (S_1, \ldots, S_N) \in \mathcal{M}_{n,N}$. 
	By Lemma~\ref{lem.D_is_dense}, 
	there exists a sequence ${(D_t)}_{t\in \mathbb{N}}$ of elements $D_t=(D_{t,1},\ldots,D_{t,N})\in \mathcal{M}_{n,N}$ that converges towards $S$ and such that for each $t\in \mathbb{N}$, there is no complex number which is an eigenvalue of two elements of $D_{t,1},\ldots,D_{t,N}$.
	In particular, $\rho(D_{t, i}) \neq \rho(D_{t, j})$ for distinct $i, j$ by the Perron-Frobenius-Theorem (Theorem~\ref{Thm:PerronFrobenius}).
	
	By possibly replacing this sequence with a subsequence, we may assume that there is a
	$j\in \{1,\ldots,N\}$ such that $\rho(D_{t, j}) > \rho(D_{t, i})$ for all $i \in \{1,\ldots,N\}\setminus \{j\}$ and each $t\in \mathbb{N}$. 
	After exchanging the ordering of $S_1,\ldots,S_N$, we may assume that $j = 1$.
	For each $i\in \{1,\ldots,N\}$, the sequence $(D_{t,i})_{t\in \mathbb{N}}$ converges towards $S_i$, so $(\rho(D_{t,i}))_{t\in \mathbb{N}}$ converges towards $\rho(S_i)$  \cite[Theorem in Appendix~A]{Os1973Solution-of-equati}.
	In particular, $\rho(S_1)=\lambda=\max\{\rho(S_1),\ldots,\rho(S_n)\}$. By the Perron-Frobenius-Theorem 
	(Theorem~\ref{Thm:PerronFrobenius}), 
	there is for each $t\in \mathbb{N}$ 
	an eigenvector $v_t \geq 0$ of $D_{t,1}$ to the eigenvalue $\rho(D_{t,1})$.
	Lemma~\ref{lem.distincteg.thenok} then gives 
	for each $i\in \{1,\ldots,N\}$ and each $t\in \mathbb{N}$
	\[
		D_{t,i} v_t \leq \rho(D_{t,1}) v_t \, .
	\]
	Now, we may assume that $\lVert v_t \rVert = 1$ for all $t$ (after normalizing $v_t$). 
	Let
	\[
		\mathbb{S}^{n-1} = \set{w \in \RR^n}{\lVert w \rVert =1} \, .
	\]
	Since $\mathbb{S}^{n-1}$ is compact (with respect to the Euclidean topology), we may take a subsequence and
	assume that $(v_t)_{t\in \mathbb{N}}$ converges to a $v \geq 0$ in $\mathbb{S}^{n-1}$. Thus we get
	\[
		\lambda v=\rho(S_1) v = \lim_{t\to\infty} \rho(D_{t, 1}) v_t = \lim_{t \to \infty} D_{t, 1} v_t = S_1 v
	\]
	and for each $i\in \{1,\ldots,N\}$
	\[
		S_i v  = \lim_{t\to\infty} D_{t, i} v_t \leq \lim_{t\to\infty} \rho(D_{t, 1}) v_t = 
		\rho(S_1) v =\lambda v
		\quad \, .
	\]
	This finishes the proof of the proposition.
\end{proof}

\begin{proof}[Proof of Proposition~$\ref{Prop:MonomialEig}$]
	By Remark~\ref{rem.matrices_contained_in_M}, there exists $(S_1, \ldots, S_N) \in  \mathcal{M}_{n, N}$
	such that $\{ S_1, \ldots, S_N \}$ is the set of matrices contained in $f$. By Proposition~\ref{prop.StepOne}
	there exists $j \in \{1, \ldots, N\}$  and
	an eigenvector $\mu=(\mu_1,\ldots,\mu_n) \in (\r_{\geq 0})^n \setminus \{ 0 \}$ of $S_{j}$ to the
	eigenvalue $\theta=\max \{\rho(S_1),\ldots,\rho(S_N)\}$ such that
	$S_i \mu \leq S_{j} \mu= \theta \mu$ for each $i \in \{1,\ldots,N\}$. 
	We now prove that this implies that $\deg_\mu(f_l) = \theta \mu_l$ for each 
	$l \in \{1, \ldots, n\}$, 
	which shows that $\mu=(\mu_1,\ldots,\mu_n)$ is a maximal eigenvector of $f$, and thus proves~\ref{Prop:MonomialEig1}. 
	For each monomial $m=\chi x_1^{r_1}\cdots x_n^{r_n}$ of $f_l$ with $\chi\in \k^*$ there is a matrix $S_i$ 
	with its $l$-th line equal to $(r_1\  r_2\ \cdots\ r_n)$. The $l$-th component of $S_i \mu$ is equal to $r_1\mu_1+\cdots+r_n\mu_n=\deg_\mu(m)$. The inequality $S_i\mu \le \theta \mu$ then yields $\deg_\mu(m)\le \theta\mu_l$. As this holds for each monomial of $f_l$, we obtain $\deg_\mu(f_l) \leq \theta \mu_l$. The  equality follows from $S_j\mu =\theta \mu$, since the monomial $m$ 
	that corresponds
	to the $l$-th row of $S_j$ has  $\mu$-degree equal to $\theta\mu_l$.

	We now prove~\ref{Prop:MonomialEig2}. 
	The dominance of $f$ implies that $1\le \deg(f^r)$ for each $r$ and this in turn gives
	$1 \leq \lambda(f)$. The inequality
	$\theta\le \deg(f)$ follows from Corollary~\ref{Cor:rhosmallerthandeg}, so we only need to prove $\lambda(f)\le \theta$. This is done by  induction on $n$. 
	If $n = 1$, then $\mu \in (\r_{>0})^1$
	and the statement follows from Proposition~\ref{Prop:Dynamicaldegreeestimate}\ref{lambdasmaller}. Now, 
	let $n > 1$.
	We may assume (after a permutation of the coordinates) that 
	$\mu_1 \leq \mu_2 \leq \ldots \leq \mu_n$. Now, let $m \in \{0, \ldots, n-1\}$ with $\mu_i = 0$ for $i \leq m$
	and $\mu_i > 0$ for $i > m$. From Remark~\ref{Rem:DegreeMu}\ref{degmudeg} we get
	\[
		\lambda_2 := 
		\lim_{r \to \infty} \deg_{x_{m+1}, \ldots, x_n}(f^r)^{\frac{1}{r}} =
		\lim_{r \to \infty} \max_{i \in \{1, \ldots, n\}} \deg_{\mu}((f^r)_{i})^{\frac{1}{r}} \, .
	\]
	From Lemma~\ref{Lem:HomDecEnd} we get that for each $i \in \{1, \ldots, m \}$, the element
	$f_i$ is a polynomial in the variables $\{x_1, \ldots, x_m \}$. Thus we get from
	Lemma~\ref{lem.DinhNGuyen} that $\lambda(f) = \max \{\lambda_1, \lambda_2 \}$ where
	\[
		\lambda_1 = \lambda(\hat{f}) = \lim_{r \to \infty} \deg( \hat{f}^r)^{\frac{1}{r}}
		\quad \textrm{and} \quad \hat{f} := (f_1, \ldots, f_m) \in \End(\AA^m) \, .
	\]
	Since $m \leq n-1$, by induction hypothesis we have
	\[
		\lambda_1 \leq \theta_1 := 
		\max \set{|\xi| \in \r}{ \textrm{$\xi$ is an eigenvalue of a matrix that is
				contained in $\hat{f}$}} \, .
	\]
	Note that each eigenvalue of a matrix that is contained in $\hat{f}$ is an eigenvalue
	of a matrix that is contained in $f$. Thus we get $\theta_1 \leq \theta$. From Lemma~\ref{Lemma:Dynamicaldegreeestimate}\ref{degmudyn}, it follows that 
	$\lambda_2 \leq \theta$.
	In summary we proved that $\lambda(f) = \max\{\lambda_1, \lambda_2\} \leq \theta$, i.e.~\ref{Prop:MonomialEig2} holds for $n$.
	
	We now prove~\ref{Prop:MonomialEig3}. We take a maximal eigenvector $\mu$ of $f$. As $\deg_\mu(f_i)= \theta \mu_i$ for each $i\in \{1,\ldots,n\}$, we have $\deg_\mu(f)=\theta$.  If $\theta=1$, \ref{MonomialEig3i} follows from \ref{Prop:MonomialEig2} and \ref{MonomialEig3ii} is trivially true, so we may assume that $\theta>1$. 
	If $f$ is $\mu$-algebraically stable, then Lemma~\ref{Lemma:Dynamicaldegreeestimate}\ref{degmudyn2} gives $\lambda_2 = \theta$ and
	thus $\lambda(f) = \theta$, so \ref{MonomialEig3i} is proven. Conversely, if $\mu\in (\r_{>0})^n$ and $\lambda(f)=\theta>1$, then $f$ is $\mu$-algebraically stable by Proposition~\ref{Prop:Dynamicaldegreeestimate}\ref{lambdaequal}. This achieves the proof of \ref{MonomialEig3ii}. As $\theta = \deg_\mu(f) \in \RR_{\geq 0}$ (i.e.~is not equal to $+\infty$),
	\ref{MonomialEig3iii} is a direct consequence of Lemma~\ref{Lemma:Dynamicaldegreeestimate}\ref{degmudyn2}.
\end{proof}

We now give an example that shows that the implication of Proposition~\ref{Prop:MonomialEig}\ref{Prop:MonomialEig3}\ref{MonomialEig3i} is not an equivalence.

\begin{example}\label{ExamplePropBImpNotequi}
We consider the automorphism 
\[
	f = (f_1, f_2, f_3, f_4)=((x_1)^2+x_2,x_1,x_3+(x_3+x_4)^2,x_4-(x_3+x_4)^2)\in \Aut(\A^4) \, .
\] 
As $\deg(f)=2$, the maximal eigenvalue $\theta$ of $f$ (see Definition~\ref{Defi:Matrixcontained}) satisfies $\theta\le 2$ (Corollary~\ref{Cor:rhosmallerthandeg}). Moreover, $\theta=2$, as the matrix 
\[
	\begin{pmatrix} 
		2 & 0 & 0&0\\ 
		1 & 0 & 0&0\\ 
		0 & 0 &1&0\\ 
		0 & 0 & 0&1
	\end{pmatrix}
\] 
is contained in $f$. When we choose $\mu=(0,0,1,1)$, we get $\deg_\mu(f)=2$, and we
see that $f$ is not $\mu$-algebraically stable, as $\deg_\mu(f^2)=2<4$. 
Moreover,  
$\deg_\mu(f_i) = 0$ for $i \in \{1, 2\}$ and $\deg_\mu(f_i) = 2$ for $i \in \{3, 4\}$. Thus $\mu$
is a maximal eigenvector of $f$ (see Definition~\ref{Defi:Matrixcontained}).
However, $\lambda(f)=\theta$. Indeed, $\lambda(f)\le \deg(f)=2$, and $((x_1)^2+x_2,x_1)$ is algebraically stable for the standard degree, as its homogeneous part of degree $2$ is $((x_1)^2,0)$, which satisfies $((x_1)^2,0)^r=((x_1)^{2^r},0)$ for each $r\ge 1$ (see Proposition~\ref{Prop:Dynamicaldegreeestimate}).
\end{example}

\section{Explicit calculation of dynamical degrees of affine-triangular automorphisms}
\label{Sec:Explicit}
In this section, we apply Proposition~\ref{Prop:MonomialEig} to compute the dynamical degrees of affine-triangular dominant endomorphisms of $\A^n$. We prove Proposition~\ref{Prop:PermutationElementary}, 
which implies Propositions~\ref{Prop:Jonsson} and~\ref{Prop:PermElementary}.
\begin{notation} 
	We denote by $\TEnd(\A^n)$ and $\TAut(\A^n)$ (respectively  $\EEnd(\A^n)$ and $\EAut(\A^n)$) the monoid and group of triangular (respectively elementary) endomorphisms and automorphisms of $\A^n$. We denote by $\Aff(\A^n)$ the group of affine automorphisms of $\A^n$ and by $\Sym(\A^n)\subset \Aff(\A^n)$ the group of permutations of the coordinates.
\end{notation}

\subsection{From affine-triangular to permutation-triangular endomorphisms}

We can restrict ourselves to permutation-triangular endomorphisms, as the next simple result shows.
		
\begin{proposition}\label{Affine-Triangular-Permutation}
	Each affine-triangular endomorphism of $\A^n$ is conjugate by an element of $\Aff(\A^n)$ to a permutation-triangular endomorphism.
	\end{proposition}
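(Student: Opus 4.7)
The plan is to reduce the statement to a factorization of the affine part and then use that triangular endomorphisms form a monoid under composition. Write $f = \alpha\circ \tau$ with $\alpha\in \Aff(\A^n)$ and $\tau\in \TEnd(\A^n)$, and write $\alpha(x)=Ax+b$ with $A=(a_{ij})\in \GL_n(\k)$ and $b\in\k^n$ (where we view elements of $\A^n$ as column vectors, so that the convention $f_i\in\k[x_1,\ldots,x_i]$ corresponds to lower-triangular linear parts).

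First, I would invoke the Bruhat decomposition of $\GL_n(\k)$ with respect to the Borel subgroup $B^-\subset \GL_n(\k)$ of invertible lower-triangular matrices: any $A\in \GL_n(\k)$ can be written as $A = L_1\, P\, L_2$ where $L_1,L_2\in B^-$ and $P$ is a permutation matrix. (Alternatively, this is a direct consequence of Gaussian elimination with partial pivoting, so the statement holds over any field.) Defining the affine automorphisms $\ell_1(x) = L_1 x + b$, $\pi(x)=Px$, $\ell_2(x)=L_2 x$, one checks at once that $\ell_1\circ \pi\circ \ell_2=\alpha$; moreover $\ell_1,\ell_2\in \TAut(\A^n)$ and $\pi\in \Sym(\A^n)$.

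Second, I would conjugate by $\beta:=\ell_1\in \Aff(\A^n)$, obtaining
\[
\beta^{-1}\circ f\circ \beta \;=\; \ell_1^{-1}\circ(\ell_1\circ \pi\circ \ell_2)\circ \tau\circ \ell_1 \;=\; \pi\circ(\ell_2\circ \tau\circ \ell_1).
\]
To finish, I would note that $\TEnd(\A^n)$ is a submonoid of $\End(\A^n)$: for $\sigma_1,\sigma_2\in \TEnd(\A^n)$ and each $i$, the polynomial $(\sigma_1\circ\sigma_2)_i=(\sigma_1)_i(\sigma_2)$ is a polynomial in $(\sigma_2)_1,\ldots,(\sigma_2)_i$, each of which lies in $\k[x_1,\ldots,x_i]$. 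Therefore $\ell_2\circ \tau\circ \ell_1\in \TEnd(\A^n)$ and $\beta^{-1}\circ f\circ \beta=\pi\circ(\ell_2\circ \tau\circ \ell_1)$ is permutation-triangular, as required.

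The only real content is the factorization $\alpha=\ell_1\circ\pi\circ \ell_2$, which is essentially the Bruhat decomposition of $\GL_n(\k)$; once that is in hand the conjugation is forced and the closure of $\TEnd(\A^n)$ under composition takes care of the rest. I expect no obstacle beyond quoting (or briefly reproving via row/column reduction) this decomposition.
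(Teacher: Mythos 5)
Your proof is correct and follows essentially the same route as the paper: both rely on the Bruhat decomposition of $\GL_n(\k)$ with respect to the Borel subgroup of lower-triangular matrices, conjugate by the first triangular factor, and conclude by observing that $\TEnd(\A^n)$ is closed under composition. The only cosmetic difference is that you absorb the translation part of $\alpha$ directly into the affine factor $\ell_1$, whereas the paper first conjugates by a translation to reduce to the case $\alpha\in\GL_n(\k)$ and only then invokes the Bruhat decomposition.
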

\begin{proof}
We take $\alpha\in \Aff(\A^n)$ and $\tau\in \TEnd(\A^n)$ and show that we can conjugate $f=\alpha\circ \tau$ to a permutation-triangular endomorphism by an element of $\Aff(\A^n)$.

Let $p=(p_1,\ldots,p_n)\in \A^n$ be the point such that $\alpha(p)=0$ and consider the translation $\tau_p=(x_1+p_1,\ldots,x_n+p_n) \in \Aff(\A^n)\cap \TAut(\A^n)$.
Then $\alpha'=\alpha\circ \tau_p \in \Aff(\A^n)$ fixes the origin $(0,\ldots,0)\in \A^n$. We then replace $\alpha$ with $\alpha'$ and $\tau$ with $\tau_p^{-1} \circ \tau$, and may assume that $\alpha$ belongs to the subgroup $\GL_n=\GL_n(\k)\subset \Aff(\A^n)$ of elements that fix the origin.

	The group $B = \TAut(\AA^n) \cap \GL_n$ is a Borel subgroup of $\GL_n$.
	It consists of all lower triangular matrices. The so-called Bruhat decomposition of $\GL_n$:
	\[
		\GL_n =  B \Sym(\A^n) B
	\]
	yields  $\beta, \gamma \in B$ and $\sigma \in \Sym(\A^n)$ such that
	$\alpha = \beta \circ \sigma\circ \gamma$. This gives
	\[
		\beta^{-1} \circ f \circ \beta =\beta^{-1} \circ \alpha \circ \tau \circ \beta  
		= \sigma \circ \gamma \circ \tau \circ \beta
	\]
	where $\gamma \circ \tau \circ \beta \in \TEnd(\AA^n)$. This achieves the proof.
\end{proof}

\subsection{Permutation-elementary automorphisms}\label{Sec:PermEl}
Up to conjugation, each per\-mu\-ta\-tion-elementary automorphism has a particular form.
This shows the following easy observation.

\begin{lemma}
\label{Lem:Conjugation_Permutation_Elementary}
Let $n\ge 1$ and let $h\in \End(\A^{n+1})$ be  a permutation-elementary automorphism.
There is a permutation of the coordinates $\alpha\in\Sym(\A^{n+1})$ such that 
\[
	f=\alpha \circ h\circ \alpha^{-1}=(f_{1},\ldots,f_{m}, \xi x_{n+1}+p(x_1,\ldots,x_{n}),x_{m+1},\ldots,x_{n}),
\] 
where $0\le m\le n$,  $\{x_1,\ldots,x_m\}=\{f_1,\ldots,f_m\}$, $\xi\in \k^*$ and $p\in \k[x_1,\ldots,x_n]$.
\end{lemma}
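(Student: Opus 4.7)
The plan is to extract from $h$ a coordinate permutation $\pi$ of $\{1, \ldots, n+1\}$, identify the $\pi$-cycle containing $n+1$, and then conjugate by a coordinate permutation $\alpha$ chosen so that this distinguished cycle is moved into the standard position $\{m+1, m+2, \ldots, n+1\}$.

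First, I would unpack the definition: by hypothesis $h = \sigma \circ e$ with $\sigma$ a coordinate permutation and $e$ elementary, and since $h$ is an automorphism, $e$ has the form $e = (x_1, \ldots, x_n, \eta x_{n+1} + q(x_1, \ldots, x_n))$ for some $\eta \in \k^*$ and $q \in \k[x_1, \ldots, x_n]$. Writing $\sigma = (x_{\pi(1)}, \ldots, x_{\pi(n+1)})$ for a permutation $\pi$ of $\{1, \ldots, n+1\}$ and composing coordinatewise, one finds that $h_i = x_{\pi(i)}$ whenever $\pi(i) \le n$, whereas $h_{i_0} = \eta x_{n+1} + q$ for the unique $i_0 := \pi^{-1}(n+1)$.

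Next, let $(c_0, c_1, \ldots, c_k)$ be the $\pi$-cycle through $n+1$, so $c_0 = n+1$, $\pi(c_j) = c_{j+1 \bmod (k+1)}$, and $c_k = i_0$; set $m := n - k$. I would then build a permutation $\tau$ of $\{1, \ldots, n+1\}$ matching the target cycle of $\pi' := \tau^{-1} \pi \tau$ (which will be $(m+1, n+1, n, \ldots, m+2)$) to the $\pi$-cycle of $n+1$, by prescribing
\[
    \tau(n+1) = c_0 = n+1, \qquad \tau(n+1-j) = c_j \ (1 \le j \le k-1), \qquad \tau(m+1) = c_k = i_0,
\]
with $\tau$ restricting to an arbitrary bijection $\{1, \ldots, m\} \to \{1, \ldots, n+1\} \setminus \{c_0, \ldots, c_k\}$. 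Set $\alpha := (x_{\tau(1)}, \ldots, x_{\tau(n+1)}) \in \Sym(\A^{n+1})$. A short direct computation of $f = \alpha \circ h \circ \alpha^{-1}$ then gives $f_i = x_{\pi'(i)}$ for $i \ne m+1$ and $f_{m+1} = \eta x_{\tau^{-1}(n+1)} + q(x_{\tau^{-1}(1)}, \ldots, x_{\tau^{-1}(n)})$. By construction the $\pi'$-cycle through $n+1$ is $(m+1, n+1, n, \ldots, m+2)$, yielding $f_j = x_{j-1}$ for $m+2 \le j \le n+1$; the remaining $\pi'$-cycles lie in $\{1, \ldots, m\}$, giving $\{f_1, \ldots, f_m\} = \{x_1, \ldots, x_m\}$; and since $\tau^{-1}(n+1) = n+1$, the middle coordinate takes the required form $f_{m+1} = \eta x_{n+1} + p(x_1, \ldots, x_n)$ with $p := q(x_{\tau^{-1}(1)}, \ldots, x_{\tau^{-1}(n)})$ (so $\xi := \eta$).

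The only point requiring care — really a matter of bookkeeping rather than a genuine obstacle — is the simultaneous compatibility of $\tau(m+1) = i_0$ and $\tau(n+1) = n+1$. These are consistent precisely because the target $\pi'$-cycle $(m+1, n+1, n, \ldots, m+2)$ has length $k+1$, matching the $\pi$-cycle of $n+1$, and places $m+1$ and $n+1$ as adjacent entries with $m+1 \mapsto n+1$; every other step in the argument is a routine cycle-relabelling verification.
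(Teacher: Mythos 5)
Your proof is correct and follows essentially the same strategy as the paper's: write $h = \sigma \circ e$ with $\sigma$ a permutation and $e$ elementary, choose a coordinate permutation $\alpha$ that fixes the last coordinate and conjugates the $\sigma$-cycle through $n+1$ into the standard position $(m+1,n+1,n,\ldots,m+2)$, and observe that such an $\alpha$ keeps the elementary part in the required form $(x_1,\ldots,x_n,\xi x_{n+1}+p)$. The paper leaves the construction of $\alpha$ implicit (``we may choose $\alpha$ such that\dots''), whereas you carry out the cycle bookkeeping explicitly; the underlying idea is the same.
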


\begin{proof}
	We write $h=\sigma\circ \tau$ where $\sigma \in \Sym(\A^{n+1})$ and $\tau\in \EAut(\A^{n+1})$. 
	We may choose $\alpha = (\alpha_1, \ldots, \alpha_{n+1})\in \Sym(\A^{n+1})$ such that $\alpha_{n+1} = x_{n+1}$ and $\alpha \circ \sigma \circ \alpha^{-1}$
	induces the following cyclic permutation on the last coordinates
	\[
		(\alpha \circ \sigma \circ \alpha^{-1})_{m+1} = x_{n+1} \, , \ 
		(\alpha  \circ \sigma \circ \alpha^{-1})_{m+2} = x_{m+1} \, , \
		\ldots \, , \ (\alpha \circ \sigma \circ \alpha^{-1})_{n+1} = x_{n} \, ,
	\]
	for some integer $m$ with $0\le m\le n$. This gives \[
	\alpha \circ \sigma \circ \alpha^{-1} 
	= (f_1, \ldots, f_m, x_{n+1}, x_{m+1}, \ldots, x_n)
	\]
	where
	$\{ x_1, \ldots, x_m \} = \{f_1, \ldots, f_m \}$.
	As $\alpha_{n+1}=x_{n+1}$, we obtain
	\[
	\alpha \circ \tau \circ \alpha^{-1} = (x_1, \ldots, x_{n}, \xi x_{n+1} + p(x_1, \ldots, x_n))
	\]
	for some $\xi \in \k^\ast$ and $p \in \k[x_1, \ldots, x_n]$. This implies that $\alpha \circ h \circ \alpha^{-1}$
	is equal to
	\[
	(\alpha \circ \sigma \circ \alpha^{-1}) \circ
	(\alpha \circ \tau \circ \alpha^{-1}) \\ 
	= (f_{1},\ldots,f_{m}, \xi x_{n+1}+p(x_1,\ldots,x_{n}),x_{m+1},\ldots,x_{n}) \, .
	\]
\end{proof}

We will need the following result to obtain 
Proposition~\ref{Prop:PermutationElementary} below.
Proposition~\ref{Prop:PermutationElementary} will be the key ingredient to show Proposition~$\ref{Prop:Jonsson}$ and Proposition~$\ref{Prop:PermElementary}$.

\begin{lemma}\label{Lem:IterationNotzero}
	Let $0\le m\le n$, let $\hat{f}=(f_{1},\ldots,f_m)\in \Aut(\A^m)$ 
	and let $q\in \k[x_1,\ldots,x_{n+1}]\setminus\{0\}$. For each $r\ge 1$, every component of $g^r$ is non-zero
	where  
	\[
		g=(f_1,\ldots,f_m,q,x_{m+1},\ldots,x_n)\in \End(\A^{n+1}) \, .
	\]
\end{lemma}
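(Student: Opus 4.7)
The plan is to argue by induction on $n - m \geq 0$, after first reducing the lemma to non-vanishing of $(g^s)_{m+1}$ for all $s \geq 1$. A straightforward induction on $r$ shows $(g^r)_i = (\hat{f}^r)_i$ for $i \leq m$ (since the first $m$ components of $g$ lie in $\k[x_1, \ldots, x_m]$), and these are non-zero since $\hat{f}^r \in \Aut(\A^m)$. For $i \geq m+2$, the identity $g_i = x_{i-1}$ gives $(g^{r+1})_i = (g^r)_{i-1}$, and iterating reduces the non-vanishing of $(g^{r+1})_i$ either to that of some $(g^s)_{m+1}$ with $s \geq 1$, or to a variable $x_j$ (trivially non-zero). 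So it suffices to show $(g^s)_{m+1} = q \circ g^{s-1} \neq 0$ for every $s \geq 1$.

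In the base case $n = m$, if $q \in \k[x_1, \ldots, x_m]$ then $(g^s)_{m+1} = q(\hat{f}^{s-1}_1, \ldots, \hat{f}^{s-1}_m)$ is non-zero by injectivity of the automorphism $\hat{f}^{s-1}$. Otherwise, writing $e = \deg_{x_{m+1}}(q) \geq 1$ and letting $q_e$ denote the leading $x_{m+1}$-coefficient of $q$, a second induction on $s$ will show that $(g^s)_{m+1}$ has $x_{m+1}$-degree $e^s$ with non-zero leading coefficient, obtained from the recurrence as $q_e(\hat{f}^{s-1}_1, \ldots, \hat{f}^{s-1}_m)$ times the $e$-th power of the leading coefficient of $(g^{s-1})_{m+1}$, both of which are non-zero by induction.

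For the inductive step $n - m \geq 1$, I split according to whether $q$ depends on $x_{n+1}$. If $q$ does not depend on $x_{n+1}$, then no component of $g$ uses $x_{n+1}$; setting $\tilde{g} = (f_1, \ldots, f_m, q, x_{m+1}, \ldots, x_{n-1}) \in \End(\A^n)$ gives an endomorphism satisfying the hypotheses of the lemma with $n$ replaced by $n-1$. A short induction on $r$ yields $(g^r)_i = (\tilde{g}^r)_i$ for $i \leq n$ and $(g^r)_{n+1} = (\tilde{g}^{r-1})_n$, so the induction hypothesis closes this case.

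The main obstacle is the remaining case where $q$ depends on $x_{n+1}$. Here I plan to prove that $g$ itself is dominant; then $(g^{s-1})^* = (g^*)^{s-1}$ is injective for every $s \geq 1$, so $q \circ g^{s-1} \neq 0$. To prove dominance, given $p \in \k[x_1, \ldots, x_{n+1}]$ with $p \circ g = 0$, I will write $p = \sum_k P_k \cdot x_{m+1}^k$ with each $P_k$ free of $x_{m+1}$, so that
\[
p \circ g = \sum_k P_k(f_1, \ldots, f_m, x_{m+1}, \ldots, x_n) \cdot q^k.
\]
Only $q$ carries the variable $x_{n+1}$, and with non-zero leading coefficient $q_e$ where $e = \deg_{x_{n+1}}(q) \geq 1$, so distinct values of $k$ produce distinct $x_{n+1}$-degrees $ek$. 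Looking at the top $x_{n+1}$-degree term (for the largest $K$ with $P_K \neq 0$), one finds $P_K(f_1, \ldots, f_m, x_{m+1}, \ldots, x_n) \cdot q_e^K = 0$; the substitution $(x_1, \ldots, x_m, x_{m+2}, \ldots, x_{n+1}) \mapsto (f_1, \ldots, f_m, x_{m+1}, \ldots, x_n)$ is the composition of the automorphism $\hat{f}$ with a relabeling of the remaining variables, hence injective, forcing $P_K = 0$ — a contradiction. Thus all $P_k = 0$, so $p = 0$ and $g$ is dominant. This $x_{n+1}$-degree comparison is the technical heart of the whole argument.
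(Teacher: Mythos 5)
Your proof is correct, but the core of the induction is genuinely different from the paper's. Both arguments share the preliminary reductions (the first $m$ components of $g^r$ are those of $\hat f^r$, the components of index $\ge m+2$ shift down to earlier iterates, so everything reduces to $(g^s)_{m+1}=q\circ g^{s-1}\neq 0$), both induct on $n-m$, and both use that $g$ is dominant as soon as $q$ involves $x_{n+1}$. The paper, however, treats the remaining case $q\in\k[x_1,\ldots,x_n]$ by a semi-conjugation trick: it uses that $(f_1,\ldots,f_m,x_{m+1},\ldots,x_n)\in\Aut(\A^n)$ to solve $h\circ(f_1,\ldots,f_m,x_{m+1},\ldots,x_n)=q$, introduces the closed embedding $\phi=(x_1,\ldots,x_m,h,x_{m+1},\ldots,x_n)$ and $\tau=(f_1,\ldots,f_m,h,x_{m+1},\ldots,x_{n-1})\in\End(\A^n)$, verifies $g\circ\phi=\phi\circ\tau$, applies the induction hypothesis to $\tau$, and recovers the $(m+1)$-th component of $g^r$ by a small extra shift argument. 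You avoid this entirely: when $q$ does not involve $x_{n+1}$ the last variable is inert and you simply truncate $g$ to $\tilde g\in\End(\A^n)$ and recurse, and when $q$ does involve $x_{n+1}$ you give a complete algebraic proof of dominance (which the paper only asserts in passing) by comparing leading $x_{n+1}$-coefficients and using injectivity of the substitution $x_j\mapsto f_j$, $x_{m+1+i}\mapsto x_{m+i}$ on polynomials; your base-case computation with $x_{m+1}$-degrees $e^s$ is the same mechanism specialized to $n=m$. Your route is more elementary and, as a minor bonus, only uses that $\hat f$ is dominant (injectivity of pullbacks), whereas the paper's argument needs $\hat f$ to be an automorphism in order to produce $h$; the paper's semi-conjugation, in exchange, handles all $q\in\k[x_1,\ldots,x_n]$ uniformly without your explicit leading-coefficient bookkeeping.
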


\begin{proof}
	For each $r\ge 1$, we write $g^r=((g^r)_1,\ldots,(g^r)_{n+1})$. The 
	result is true by assumption when $r=1$. For each $r\ge 1$ and $1 \leq i \leq m$ we have 
	$(g^r)_i = (\hat{f}^r)_i \not=0$. 
	
	As $(f_1,\ldots,f_m)\in \Aut(\A^m)$, we also have $(f_1,\ldots,f_m,x_{m+1},\ldots,x_{n})\in \Aut(\A^{n})$. In particular, $g$ is dominant if $q\not\in \k[x_1,\ldots,x_n]$, i.e.~if $\deg_{x_{n+1}}(q)\ge 1$.
	Thus we assume that 
	$q \in \k[x_1, \ldots, x_n] \setminus \{0\}$.
	
	Suppose first that $m=n$, in which case $g=(f_1,\ldots,f_m,q)$. 
	For each $r\ge 2$, we get
	$g^r=((g^r)_1,\ldots,(g^r)_{m},q((g^{r-1})_1,\ldots,(g^{r-1})_{m}))$. As $\hat{f}$ is dominant and $q$ is not the zero polynomial, every component of $g^r$ is not zero. 
	
	We then assume that $n>m$ and prove the result by induction on $n-m$. 
	As $(f_1,\ldots,f_m,x_{m+1},\ldots,x_{n})\in \Aut(\A^{n})$, there is a polynomial $h\in \k[x_1,\ldots,x_n]$ such that $h(f_1,\ldots,f_m,x_{m+1},\ldots,x_n)=q$, since
	$q \in \k[x_1,\ldots,x_n]$.
	 We denote by $\phi\colon \A^n\hookrightarrow \A^{n+1}$ the closed embedding that is given by
	 \[
	 	(x_1,\ldots,x_n)\mapsto (x_1,\ldots,x_m,h(x_1,\ldots,x_n),x_{m+1},\ldots,x_n)
	 \]
	 and we write 
	 $\tau=(f_1,\ldots,f_m,h,x_{m+1},\ldots,x_{n-1})\in \End(\A^n)$. We now prove that $g\circ \phi=\phi\circ \tau$: 
	 \begin{align*}
	 			&g\circ \phi(x_1,\ldots,x_n) \\
	 			&=(f_1,\ldots,f_m,q(x_1,\ldots,x_m,h,x_{m+1},\ldots,x_{n-1}),h,x_{m+1},\ldots,x_{n-1})\\
	 			&=(f_1,\ldots,f_m,h(f_1,\ldots,f_m,h,x_{m+1},\ldots,x_{n-1}),h,x_{m+1},\ldots,x_{n-1})\\
	 			&=\phi\circ \tau(x_1,\ldots,x_n).
	\end{align*}
	Hence,  $g^r\circ \phi=\phi\circ \tau^r$ for each $r\ge 1$. By induction, every component of $\tau^r$ is non-zero, so every component of $g^r$ is non-zero, except maybe the $(m+1)$-th one. But if the $(m+1)$-th component of $g^r$ were zero, then the $(m+2)$-th of $g^{r+1}$ would be zero, impossible as the $(m+2)$-th component of $g^{r+1}\circ \phi=\phi\circ \tau^{r+1}$ is not equal to zero.
\end{proof}

\begin{proposition}\label{Prop:PermutationElementary}
	Let $0\le m < n$, let $\hat{f}=(f_{1},\ldots,f_m)\in \Aut(\A^m)$, $\xi\in \k^*$ and $p\in \k[x_1,\ldots,x_n]$. 
Denote by $I\subset \NN^n$ the  finite subset of indices of the monomials of $p$, and define \[
	\theta =\max\left\{\lambda\in \r\left|\lambda^{n-m}=  \sum_{j=m+1}^n i_j \lambda^{n-j}
		\text{ for some } (i_1,\ldots,i_n)\in I\right\}\right..\]
	Then,
	\[f=(f_{1},\ldots,f_{m}, \xi x_{n+1}+p(x_1,\ldots,x_{n}),x_{m+1},\ldots,x_{n})\in \Aut(\A^{n+1})\]
	has the following properties:
	\begin{enumerate}[leftmargin=*]
	\item \label{Prop:PermutationElementary_special}
	If $\deg_{x_{m+1},\ldots,x_n}(p)\le1$, then $\lambda(f)=\lambda(\hat{f})$. 
	\item \label{Prop:PermutationElementary_general}
	If $\deg_{x_{m+1},\ldots,x_n}(p)\ge 2$, define
	\begin{align*}
		\mu &= (\mu_1, \ldots, \mu_{n+1}) = (0,\ldots,0,\theta^{n-m},\theta^{n-m-1},\ldots,\theta,1) \, ,
	\end{align*}
	i.e.~$\mu_{j}=0$ for $j\le m$ and $\mu_{j}=\theta^{n+1-j}$ for $j\ge m+1$.
	Then we have $\theta>1$, $\deg_\mu(f_{j}) = \theta \mu_{j}$ for each $j$ 
	$($in particular $\deg_\mu(f)=\theta)$ and $f$ is $\mu$-algebraically stable.
	If moreover $\lambda(\hat{f}) \leq \theta$ $($in particular, if $m = 0)$, 
	then $\lambda(f) = \theta$.
	\item \label{Prop:PermutationElementary_special_permutation}
	Assume $\{f_1, \ldots, f_m\} = \{x_1, \ldots, x_m\}$. If 
	$\deg_{x_{m+1},\ldots,x_n}(p)\le1$, then
	the maximal eigenvalue of $f$ is equal to $1$ and otherwise it is equal to $\theta$.
	\end{enumerate}
\end{proposition}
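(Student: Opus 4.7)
The proof naturally splits into three claims. For (1), I would observe that $f$ preserves the linear projection $(x_1,\ldots,x_{n+1}) \mapsto (x_1,\ldots,x_m)$, so Lemma~\ref{lem.DinhNGuyen} gives $\lambda(f) = \max\{\lambda(\hat f), \lambda_2\}$ where $\lambda_2 = \lim_r \deg_{x_{m+1},\ldots,x_{n+1}}(f^r)^{1/r}$. Under the hypothesis $\deg_{x_{m+1},\ldots,x_n}(p) \leq 1$, every coordinate of $f$ is of degree $\leq 1$ in $x_{m+1},\ldots,x_{n+1}$ with coefficients in $\k[x_1,\ldots,x_m]$, and this affineness is preserved under composition since $\hat f$ stabilizes $\k[x_1,\ldots,x_m]$. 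Hence $\lambda_2 = 1 \leq \lambda(\hat f)$, giving $\lambda(f) = \lambda(\hat f)$.

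For (2), I would first establish $\theta > 1$ by exhibiting a tuple $(i_1,\ldots,i_n) \in I$ with $\sum_{j=m+1}^n i_j \geq 2$: the polynomial $P(\lambda) = \lambda^{n-m} - \sum_{j=m+1}^n i_j \lambda^{n-j}$ then satisfies $P(1) < 0$ and has a unique positive real root by Lemma~\ref{Lemm:Handelman}, necessarily strictly greater than $1$, and thus bounded above by $\theta$. The core step is verifying $\deg_\mu(f_j) = \theta\mu_j$ for every $j$: for $j \leq m$ both sides are $0$, for $j \geq m+2$ the identity is immediate from $f_j = x_{j-1}$, and for $j = m+1$ the key uniform inequality $P(\theta) \geq 0$ for every tuple in $I$ follows from Lemma~\ref{Lemm:Handelman} applied to each $P$ separately (each unique positive root $\theta_i$ satisfies $\theta_i \leq \theta$, so $P$ is non-negative at $\theta$); rescaling gives $\sum_{j=m+1}^n i_j \theta^{n+1-j} \leq \theta\mu_{m+1}$ with equality for a maximizing tuple, while $\deg_\mu(\xi x_{n+1}) = 1 < \theta\mu_{m+1}$ using $\theta > 1$ and $n - m \geq 1$. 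The $\mu$-leading part of $f$ is then $g = (f_1,\ldots,f_m, g_{m+1}, x_{m+1},\ldots,x_n)$ with $g_{m+1} \in \k[x_1,\ldots,x_n] \setminus \{0\}$ collecting the maximizing monomials of $p$; Lemma~\ref{Lem:IterationNotzero} applied to $g$ yields that every component of $g^r$ is non-zero, and since $\mu_{m+1} > 0$, Lemma~\ref{Lemma:Dynamicaldegreeestimate}\ref{degmudyn2} implies $f$ is $\mu$-algebraically stable. Under the additional hypothesis $\lambda(\hat f) \leq \theta$, Corollary~\ref{Coro:Computelambda} (treating the sub-cases $\lambda(\hat f) = \theta$ and $\lambda(\hat f) < \theta$ separately) delivers $\lambda(f) = \theta$.

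For (3), the hypothesis forces $\hat f$ to be a permutation of coordinates, so every matrix contained in $\hat f$ is a permutation matrix of spectral radius $1$. A matrix contained in $f$ therefore has block lower-triangular form with upper block $A$ contained in $\hat f$ and lower $(n-m+1)\times(n-m+1)$ block $C$ whose first row is either $e_{n-m+1}$ (making $C$ a cyclic permutation of spectral radius $1$) or $(i_{m+1},\ldots,i_n,0)$ for some $(i_1,\ldots,i_n) \in I$, in which case $\chi_C(\lambda) = \lambda \cdot P(\lambda)$ and Lemma~\ref{Lemm:Handelman} together with Perron-Frobenius identify the spectral radius of $C$ as the unique positive real root $\theta_i$ of $P$. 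Since $\theta = \max_i \theta_i$, the maximal eigenvalue of $f$ is $\max\{1,\theta\}$, which equals $1$ if $\deg_{x_{m+1},\ldots,x_n}(p) \leq 1$ (forcing $\theta \leq 1$ by the same polynomial analysis in reverse) and equals $\theta > 1$ otherwise. The main obstacle throughout is the uniform inequality $\theta^{n-m} \geq \sum_{j=m+1}^n i_j \theta^{n-j}$ for all tuples in $I$, which bridges the pointwise definition of $\theta$ as a supremum over separate one-tuple root problems with the uniform $\mu$-degree estimate needed in (2); Lemma~\ref{Lemm:Handelman} is the essential input both there and for the Perron-Frobenius identification in (3).
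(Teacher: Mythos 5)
Your proof is correct and follows essentially the same route as the paper: part (1) via Lemma~\ref{lem.DinhNGuyen}, part (2) via the polynomials $x^{n-m}-\sum_{j=m+1}^n i_j x^{n-j}$, the explicit eigenvector $\mu$, the identification of the $\mu$-leading part and Lemma~\ref{Lem:IterationNotzero} combined with Lemma~\ref{Lemma:Dynamicaldegreeestimate}\ref{degmudyn2} and Corollary~\ref{Coro:Computelambda}, and part (3) by inspecting the block structure of the matrices contained in $f$. The only cosmetic difference is that you route the uniform inequality $\theta^{n-m}\ge\sum_{j=m+1}^n i_j\theta^{n-j}$ and the spectral-radius identification through Lemma~\ref{Lemm:Handelman}, where the paper argues directly from the fact that these monic polynomials have no real root exceeding $\theta$; just note the degenerate tuples with $i_j=0$ for all $j\ge m+1$, to which Lemma~\ref{Lemm:Handelman} does not apply but for which the needed inequality (and spectral radius $0$) is immediate.
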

\begin{remark}\label{Remarkmntrivial}
The case $m=n$, not treated in  Proposition~$\ref{Prop:PermutationElementary}$, is rather trivial. We have $f=(f_1,\ldots,f_n,\xi x_{n+1}+p(x_1,\ldots,x_n))$ where $\{f_1,\ldots,f_n\}=\{x_1,\ldots,x_n\}$. Every matrix contained in $f$ is then a block-matrix with a $(n\times n)$-permutation-matrix and a $(1\times 1)$-matrix with a $0$ or a $1$ on the diagonal, so every eigenvalue is either $0$ or a root of unity. This implies that $\theta=1$ is the only possible maximal eigenvalue.
\end{remark}

	\begin{proof}[Proof of Proposition~$\ref{Prop:PermutationElementary}$]
	\ref{Prop:PermutationElementary_special}
	Since $\deg_{x_{m+1},\ldots,x_n}(p)\le1$,	
	Lemma~\ref{lem.DinhNGuyen} implies that 
	\[
		\lambda(f)=\max\{\lambda(\hat{f}),\lim_{r\to \infty}\deg_{x_{m+1},\ldots,x_{n+1}}(f^r)^{1/r}\}
		= \lambda(\hat{f})
	\] 	
	where by convention $\lambda(\hat{f}) = 1$ in case $m = 0$.

	\ref{Prop:PermutationElementary_general}:
	For each $i=(i_1,\ldots,i_n)\in I$, we set 
	\[
		p_i= \sum_{j=m+1}^n i_j x^{n-j} \in \Z[x]
	\] 
	and $q_i=x^{n-m}-p_i\in \Z[x]$. Then $\theta$ is the biggest real root of one of the polynomials in $\{q_i\mid i\in I\}$.	
	Note that $q_i$ is monic and of degree 
	$n-m > 0$. As $\deg_{x_{m+1},\ldots,x_n}(p)\ge 2$, there is $i=(i_1,\ldots,i_n)\in I$ such that $p_i(1)\ge 2$. This implies that $q_i(1)=1-p_i(1)<0$, so $q_i$ has a real root that is bigger than $1$. This proves that $\theta>1$. For each $i\in I$, we moreover have $q_i(\theta)\ge 0$, since $q_i$ has no real root bigger than $\theta$. This gives $\theta^{n-m}\ge p_i(\theta)$, with equality for at least one $i \in I$.
	
	We now prove that $\deg_\mu(f_{j})=\theta \mu_{j}$ for each $j\in \{1,\ldots,n+1\}$
	where $f_j$ denotes the $j$-th component of $f$:
	For each $j \in \{1,\ldots,m\}$ we have $\deg_\mu(f_{j})=0=\theta \mu_{j}$ 
	and for each $j \in \{m+2,\ldots,n+1\}$, we have 
	$\deg_\mu(f_{j})=\deg_\mu(x_{j-1})=\mu_{j-1}=\theta \mu_{j}$.
	We moreover have
	\begin{align*}
		\deg_{\mu}(f_{m+1})&= \max \left(\{\deg_\mu(x_{n+1})\}\cup 
		\Bigset{\sum_{j=m+1}^{n} {i_j\mu_j}}{(i_1,\ldots,i_n)\in \NN^n} \right) \\
		&=\max\left(\{1\}\cup \set{\theta\cdot p_i(\theta)}{i=(i_1,\ldots,i_n)\in \NN^n} \right)
		=\theta^{n-m+1}=\theta \mu_{m+1}.
	\end{align*}
	This gives in particular $\theta = \deg_\mu(f)$.
	
	It remains to prove that $f$ is $\mu$-algebraically stable, i.e.~that $\deg_\mu(f^r)=\theta^r$ for each $r\ge 1$; this will then give the result by Corollary~\ref{Coro:Computelambda}.
	
	 By Lemma~\ref{Lemma:Dynamicaldegreeestimate}\ref{degmudyn2}, this corresponds to ask that for each $r\ge 1$, there exists $j \in \{m+1,\ldots,n\}$ such that $(g^r)_{j} \not=0$, where $g=(g_1,\ldots,g_{n+1})\in 
	 \End(\A^{n+1})$ is 
	the $\mu$-leading part of $f$
	 and $(g^r)_{j}$ denotes the $j$-th component of $g^r$.
	 We observe that
	 \[g=(f_1,\ldots,f_m,g_{m+1},x_{m+1},\ldots,x_{n})\]
	 where $g_{m+1}\in \k[x_1,\ldots,x_{n+1}]\setminus \{0\}$.  The result then follows from Lemma~\ref{Lem:IterationNotzero}
	 
	\ref{Prop:PermutationElementary_special_permutation}: 
		The maximal eigenvalue of $f$ is the biggest real number that is an eigenvalue of one of the matrices contained in $f$. Each such matrix is either contained in $(f_{1},\ldots,f_{m}, \xi x_{n+1},x_{m+1},\ldots,x_{n})$, but then has spectral radius equal to $1$, or is contained in $(f_{1},\ldots,f_{m}, \prod_{j=1}^n x_j^{i_j},x_{m+1},\ldots,x_{n})$ for some $(i_1,\ldots,i_n)\in I$. In this latter case, the spectral radius is the one of the matrix 
		\[
		\begin{pmatrix}  
		i_{m+1} & \cdots & i_n & 0 \\
		1 & \cdots & 0 & 0  \\
		\vdots & \ddots & \vdots & \vdots  \\
		0 & \cdots & 1 & 0
		\end{pmatrix}
		\]
		and thus equal to the biggest real root of the polynomial $ x^{n-m}-\sum_{j=m+1}^{n} i_j x^{n-j}$. If $\deg_{x_{m+1},\ldots,x_n}(p)\le1$, the maximal eigenvalue is again equal to $1$,  and if $\deg_{x_{m+1},\ldots,x_n}(p)\ge 2$, we get 
		that $\theta$ is the maximal eigenvalue of $f$.
	\end{proof}
	
As  mentioned
in the introduction, the following result is due to Mattias Jonsson (unpublished).

\begin{proposition}\label{Prop:Jonsson}
	For each $n\ge 1$ and each polynomial $p\in \k[x_1,\ldots,x_{n}]$ of degree $\ge 2$, 
	let $e_p\in \Aut(\A^{n+1})$ be the automorphism
	\[e_p=(x_{n+1}+p(x_1,\ldots,x_{n}),x_1,\ldots,x_{n})\in \Aut(\A^{n+1}).\]
	Let $I\subset \NN^n$ be the  finite subset of indices of the monomials of $p$. We get
	 \[
	 	\lambda(e_p)=\max\left\{\lambda\in \r\left|
	 	\lambda^{n} = \sum_{j=1}^{n} i_j\lambda^{n-j} \ \text{ for some } (i_1,\ldots,i_n)\in I\right\}\right.
	 \] 
	\end{proposition}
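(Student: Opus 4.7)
The plan is to deduce this result as an immediate special case of Proposition~\ref{Prop:PermutationElementary} with $m=0$. Indeed, taking $m=0$, $\xi=1$, and the given polynomial $p \in \k[x_1,\ldots,x_n]$, the automorphism
\[
	f=(\xi x_{n+1}+p(x_1,\ldots,x_{n}),x_{m+1},\ldots,x_{n})
\]
in the statement of Proposition~\ref{Prop:PermutationElementary} coincides with $e_p$, and the definition of $\theta$ in that proposition reduces to exactly the formula $\theta=\max\{\lambda\in\r \mid \lambda^{n}=\sum_{j=1}^{n} i_j\lambda^{n-j}\text{ for some }(i_1,\ldots,i_n)\in I\}$ appearing in the conclusion to be proved.

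Next I would verify that the hypothesis $\deg_{x_{m+1},\ldots,x_n}(p)\ge 2$ of part~\ref{Prop:PermutationElementary_general} of Proposition~\ref{Prop:PermutationElementary} is satisfied: since $m=0$, the quantity $\deg_{x_{m+1},\ldots,x_n}(p)$ is just the usual total degree of $p$, which is assumed to be at least $2$. Thus case~\ref{Prop:PermutationElementary_general} applies, giving that $\theta>1$, that $f$ is $\mu$-algebraically stable for $\mu=(\theta^n,\theta^{n-1},\ldots,\theta,1)\in(\r_{>0})^{n+1}$, and (because the assumption ``$\lambda(\hat{f})\leq\theta$, in particular if $m=0$'' is automatic) that $\lambda(f)=\theta$. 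Substituting back yields $\lambda(e_p)=\theta$, which is the claimed formula.

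There is essentially no remaining obstacle: the entire statement of Proposition~\ref{Prop:Jonsson} is absorbed into a notational translation of Proposition~\ref{Prop:PermutationElementary}\ref{Prop:PermutationElementary_general}. The only thing to be careful about is matching the convention $\lambda(\hat{f})=1$ when $m=0$ with the condition $\lambda(\hat{f})\le\theta$, which holds automatically since we have already established $\theta>1$ from the degree hypothesis on $p$. Accordingly, the proof can be written in just a few lines, reducing to an invocation of Proposition~\ref{Prop:PermutationElementary} with $m=0$.
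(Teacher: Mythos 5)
Your proposal is correct and takes precisely the same route as the paper, which also proves Proposition~\ref{Prop:Jonsson} by invoking Proposition~\ref{Prop:PermutationElementary}\ref{Prop:PermutationElementary_general} with $m=0$ and $\xi=1$. Your additional remarks, confirming that the degree hypothesis and the convention $\lambda(\hat{f})=1$ make the hypotheses match, are accurate and make explicit what the paper leaves implicit.
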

\begin{proof}
Apply Proposition~\ref{Prop:PermutationElementary}\ref{Prop:PermutationElementary_general} 
with $m=0$ and $\xi=1$.
\end{proof}

\begin{proof}[Proof of Proposition~$\ref{Prop:PermElementary}$]

Let $h\in \End(\A^{n+1})$ be  a permutation-elementary automorphism.
By Lemma~\ref{Lem:Conjugation_Permutation_Elementary} there is a permutation of the coordinates $\alpha\in\Sym(\A^{n+1})$ such that 
\[
f=\alpha \circ h\circ \alpha^{-1}=(f_{1},\ldots,f_{m}, \xi x_{n+1}+p(x_1,\ldots,x_{n}),x_{m+1},\ldots,x_{n}),
\] 
where $0\le m\le n$,  $\{x_1,\ldots,x_m\}=\{f_1,\ldots,f_m\}$, $\xi\in \k^*$ and $p\in \k[x_1,\ldots,x_n]$.
In particular $\lambda(\hat{f}) = 1$ where $\hat{f} = (f_1, \ldots, f_m) \in \Sym(\AA^m)$.

As the maximal eigenvalue $\theta$ of $f$ is bigger than $1$, 
we have $m<n$ (see Remark~\ref{Remarkmntrivial}).
 Moreover, Proposition~\ref{Prop:PermutationElementary}\ref{Prop:PermutationElementary_special_permutation} 
yields
that $\deg_{x_{m+1,\ldots,x_n}}(p)\ge2$. Then, Proposition~\ref{Prop:PermutationElementary}\ref{Prop:PermutationElementary_general},\ref{Prop:PermutationElementary_special_permutation} 
give the existence of a maximal eigenvector $\mu$ such that $f$ is $\mu$-algebraically stable and prove that the dynamical degree $\lambda(f)$ is equal to the maximal eigenvalue $\theta$ of $f$ 
(this latter fact also follows from Proposition~\ref{Prop:MonomialEig}). Since
$\alpha \in \Sym(\AA^{n+1})$ we get that $\alpha^{-1}(\mu)$ is a maximal eigenvector
of $h = \alpha^{-1} \circ f \circ \alpha$, $h$ is $\alpha^{-1}(\mu)$-algebraically stable
and $\theta$ is the maximal eigenvalue of $h$.
Moreover,  $\lambda(h) = \lambda(f)$. Proposition~\ref{Prop:PermutationElementary}\ref{Prop:PermutationElementary_general} 
shows that $\theta$ is the root of a monic integral polynomial where all coefficients 
(except the first one) are non-positive, so it is a Handelman number by definition.
\end{proof}
\subsection{Affine-triangular automorphisms of $\A^3$}\label{Sec:AffineTriangularA3}
In this section, we apply  Proposition~\ref{Prop:MonomialEig}  to affine-triangular automorphisms $f\in \Aut(\A^3)$ and prove Proposition~\ref{Prop:AffineTriangularA3} and Theorem~\ref{Thm:AffTriang3}. By Proposition~\ref{Affine-Triangular-Permutation}, we can reduce to the case of permutation-triangular automorphisms. If the maximal eigenvalue $\theta$ of $f$ is equal to $1$, then Proposition~\ref{Prop:MonomialEig} gives $\lambda(f)=\theta$. If $\theta>1$, there is a maximal eigenvector 
$\mu=(\mu_1,\ldots,\mu_n)\in (\mathbb{R}_{\ge  0})^n \setminus \{0\}$ of $f$, and if $f$ is $\mu$-algebraically stable, we obtain $\lambda(f)=\theta$ (Proposition~\ref{Prop:MonomialEig}\ref{Prop:MonomialEig3}). We will then study the cases where $f$ is not $\mu$-algebraically stable. This implies that the $\mu$-leading part $g$ of $f$ is such that one component of $g^r$ is equal to zero for some $r\ge 1$. The possibilities for such endomorphisms $g$ are studied in Lemma~\ref{Lem:PossibilitiesForgrzero} below. 
The following result is a simple observation, whose proof is left as an exercise.

\begin{lemma}\label{Lemm:DominantTriangular}
Let $n\ge 1$ and let $f=(f_1,\ldots,f_n)\in \TEnd(\A^n)$ be a triangular endomorphism. Then, 
\begin{enumerate}[leftmargin=*]
	\item \label{Lemm:DominantTriangular_dom} $f$ is dominant if and only if $\deg_{x_i}(f_i)\ge 1$ for each $i\in \{1,\ldots,n\}$;
	\item \label{Lemm:DominantTriangular_auto} 
	$f$ is an automorphism
	if and only if $\deg_{x_i}(f_i) = 1$ for each $i \in \{1, \ldots, n\}$.
\end{enumerate}
\end{lemma}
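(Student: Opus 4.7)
The plan is to prove both assertions by induction on $n$, using the triangular structure to reduce from $\A^n$ to $\A^{n-1}$. The case $n=1$ is immediate: a polynomial $f_1\in\k[x_1]$ defines a dominant endomorphism (respectively an automorphism) of $\A^1$ precisely when $\deg(f_1)\ge 1$ (respectively $\deg(f_1)=1$).

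For part~\ref{Lemm:DominantTriangular_dom}, the ``only if'' direction will be a contrapositive: if $\deg_{x_i}(f_i)=0$ for some $i$, then $f_i\in\k[x_1,\ldots,x_{i-1}]$, and combined with triangularity this forces all of $f_1,\ldots,f_i$ to lie in $\k[x_1,\ldots,x_{i-1}]$. Hence the composition $\pi_i\circ f\colon\A^n\to\A^i$ of $f$ with the projection $\pi_i$ onto the first $i$ coordinates factors through $\A^{i-1}$, so its image has dimension at most $i-1$; since the projection of a dominant morphism is dominant, $f$ cannot be dominant. For the ``if'' direction I would show by induction on $n$ that the pullback homomorphism $f^*\colon\k[x_1,\ldots,x_n]\to\k[x_1,\ldots,x_n]$, $x_i\mapsto f_i$, is injective. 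Given a non-zero $p=\sum_{j=0}^d p_j(x_1,\ldots,x_{n-1})\, x_n^j$ with $p_d\neq 0$ satisfying $p(f_1,\ldots,f_n)=0$, I would expand this equality inside $\k[x_1,\ldots,x_{n-1}][x_n]$ and use $\deg_{x_n}(f_n)\ge 1$: since the summands have strictly increasing $x_n$-degrees $j\cdot\deg_{x_n}(f_n)$, the coefficient of the highest power of $x_n$ equals $p_d(f_1,\ldots,f_{n-1})$ times a non-zero power of the $x_n$-leading coefficient of $f_n$, forcing $p_d(f_1,\ldots,f_{n-1})=0$. The induction hypothesis, applied to the triangular endomorphism $(f_1,\ldots,f_{n-1})$ of $\A^{n-1}$, then gives $p_d=0$, a contradiction.

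For part~\ref{Lemm:DominantTriangular_auto}, the main tool is the Jacobian criterion. Since $f$ is triangular, the Jacobian matrix $\Jac(f)$ is lower triangular with diagonal entries $\partial f_i/\partial x_i$, so $\det\Jac(f)=\prod_{i=1}^n \partial f_i/\partial x_i$. If $f$ is an automorphism, this determinant lies in $\k^*$ (by differentiating $f^{-1}\circ f=\id$), which forces each factor $\partial f_i/\partial x_i$ to be a non-zero scalar and hence $f_i=c_i x_i+g_i(x_1,\ldots,x_{i-1})$ with $c_i\in\k^*$; in particular $\deg_{x_i}(f_i)=1$. Conversely, when each $f_i$ has this form the inverse is constructed recursively: having expressed $x_1,\ldots,x_{i-1}$ as polynomials in $y_1,\ldots,y_{i-1}$, one solves $y_i=c_i x_i+g_i(x_1,\ldots,x_{i-1})$ for $x_i$ by subtraction and division by the scalar $c_i$.

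The only point that demands care is the reading of part~\ref{Lemm:DominantTriangular_auto}: the condition ``$\deg_{x_i}(f_i)=1$'' must be interpreted as saying that the $x_i$-leading coefficient of $f_i$ is a non-zero element of $\k$, not merely of $\k[x_1,\ldots,x_{i-1}]$; otherwise the triangular endomorphism $(x_1, x_1 x_2)$ would be a counterexample, being birational but not an automorphism. This scalarity is exactly what the Jacobian argument produces in the ``only if'' direction, so under the strong reading both implications are entirely formal, and the only real content is the lower-triangular Jacobian computation.
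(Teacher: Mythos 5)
The paper states no proof for this lemma (it is ``left as an exercise''), so there is nothing to compare against; I only assess your argument. Your proof of part~(1) is correct in both directions: the projection argument for non-dominance is clean, and the injectivity of $f^*$ via the strictly increasing $x_n$-degrees of the summands $p_j(f_1,\ldots,f_{n-1})f_n^j$ is the right induction. Your closing remark that ``$\deg_{x_i}(f_i)=1$'' must be read as the $x_i$-leading coefficient lying in $\k^*$ (your example $(x_1,x_1x_2)$ being a genuine counterexample to the naive reading) is correct and worth noting. However, your proof of the ``only if'' direction of part~(2) has a real gap: the paper works over an arbitrary field $\k$, and in characteristic $p>0$ the inference ``$\partial f_i/\partial x_i\in\k^*$ forces $f_i=c_ix_i+g_i(x_1,\ldots,x_{i-1})$'' is false. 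For instance $f_i=x_i+x_{i-1}x_i^{\,p}$ satisfies $\partial f_i/\partial x_i=1$, since the $p$-power monomials in $x_i$ are killed by differentiation. Thus $\det\Jac(f)\in\k^*$ only tells you each $\partial f_i/\partial x_i$ is a nonzero scalar, which does not control $\deg_{x_i}(f_i)$ in positive characteristic.

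A characteristic-free repair reuses the degree bookkeeping you already set up in part~(1). If $f$ is a triangular automorphism, then $x_n\in\k[f_1,\ldots,f_n]$, so $x_n=\sum_{j\ge 0} q_j(f_1,\ldots,f_{n-1})\,f_n^{\,j}$. As $f_1,\ldots,f_{n-1}\in\k[x_1,\ldots,x_{n-1}]$, the nonzero summands have pairwise distinct $x_n$-degrees $j\cdot\deg_{x_n}(f_n)$, so matching $x_n$-degrees with the left-hand side gives $J\cdot\deg_{x_n}(f_n)=1$, hence $\deg_{x_n}(f_n)=1$ and $J=1$; comparing coefficients of $x_n^1$ then forces the $x_n$-leading coefficient of $f_n$ to be a unit of $\k[x_1,\ldots,x_{n-1}]$, i.e.\ an element of $\k^*$. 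Composing $f$ with the elementary automorphism that undoes $f_n$ and inducting on $n$ completes the argument. (Your ``if'' direction by recursive construction of the inverse is fine.)
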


\begin{lemma}\label{Lem:PossibilitiesForgrzero} Let $g=(g_1,g_2,g_3)=\sigma\circ \tau\in \End(\A^3)$ where $\tau\in \TEnd(\A^3)$ is a triangular endomorphism, $\sigma\in \Sym(\A^3)$ is a permutation of the coordinates, 
where all $g_i$ are non-constant and such that one  of the components of $g^r$ is a constant for some $r\ge 2$. Then, one of the following holds:
\begin{enumerate}[leftmargin=*]
\item\label{Iterzero1}
$g_2,g_3\in \k[x_1]$, $g_1\in \k[x_1,x_2,x_3]\setminus (\k[x_1,x_2]\cup \k[x_1,x_3])$ 
and there exists $\zeta\in \k$ such that  $g_1(t,g_2,g_3)=\zeta$ for each $t\in \k$;
\item\label{Iterzero2}
$g_1,g_3\in \k[x_1]$, $g_2\in \k[x_1,x_3]\setminus \k[x_1]$;
\item\label{Iterzero3}
$g_1,g_2\in \k[x_1]$, $g_3\in \k[x_1,x_2]\setminus \k[x_1]$;
\item\label{Iterzero4}
$g_1,g_2\in \k[x_1,x_2]\setminus \k[x_1]$, $g_3\in \k[x_1]$ and $g_1(g_1,g_2)=\zeta_1$,  $g_2(g_1,g_2)=\zeta_2$ for some $\zeta_1,\zeta_2\in \k$.
\end{enumerate}
\end{lemma}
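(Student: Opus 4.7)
The plan is a case analysis on the permutation $\sigma$, equivalently on the bijection $\pi \in S_3$ with $g_i = \tau_{\pi(i)}$, so that $g_i \in \k[x_1, \ldots, x_{\pi(i)}]$. For each of the six permutations I will determine the structural constraints forced by constancy of some $(g^r)_i$ and match them to one of Cases~1--4. A preliminary step is to choose $r \ge 2$ minimally, so that $(g^s)_j$ is non-constant for all $j$ and all $1 \le s < r$; this minimality is the main leverage for propagating information along the iteration.

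The main tool is tracking leading coefficients in a suitable variable. Writing $(g^r)_i = g_i(f_1^{(r-1)}, f_2^{(r-1)}, f_3^{(r-1)})$ and expanding in the highest-indexed variable on which it depends, the leading coefficient equals the leading coefficient of $g_i$ with the lower iterates substituted in. Algebraic-independence arguments (a non-zero polynomial in $\k[x_1]$ cannot vanish on a non-constant polynomial in $x_1$; a polynomial in $\k[y_1, y_2]$ cannot vanish when $y_1, y_2$ are substituted by polynomials whose joint image is Zariski-dense, such as an element of $\k[x_1]$ together with one of positive $x_2$-degree) then show that the leading coefficient is typically non-zero. Hence constancy of $(g^r)_i$ forces either restricted support of $g_i$, which collapses the iteration onto fewer variables, or a polynomial identity among $g_1, g_2, g_3$ obtained by tracing the cancellation back from the lower iterates.

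Concretely, when $\pi = \mathrm{id}$ the $x_1$- and $x_2$-leading-term analysis forces $(g^r)_1$ and $(g^r)_2$ to be non-constant, so only $(g^r)_3$ can be constant; then tracking the $x_3$- and $x_2$-degrees collapses the support onto $g_2 \in \k[x_1]$ and $g_3 \in \k[x_1, x_2] \setminus \k[x_1]$, giving Case~3. The transpositions produce Cases~2 and~4 according to whether the unique component with nontrivial $x_3$-dependence is $g_2$ (with $g_3 \in \k[x_1]$) or whether the two components that depend on $x_2$ are $g_1, g_2$ (with $g_3 \in \k[x_1]$). The three-cycles that place $\tau_3$ into the first slot produce Case~1: here $g_2, g_3 \in \k[x_1]$ and $g_1$ depends on both $x_2$ and $x_3$, and the identity $g_1(t, g_2(t), g_3(t)) = \zeta$ emerges by expanding $(g^2)_1 = g_1(g_1, g_2(x_1), g_3(x_1))$ in powers of its first argument and using that the substituted coefficients lie in $\k[x_1]$ and must vanish in every positive power.

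The main obstacle will be cleanly extracting the explicit polynomial identities in Cases~1 and~4 from the bare constancy condition. In each of these cases the recipe is the same: constancy of $(g^r)_i$ translates, after minimality of $r$, into a polynomial identity of the form $P(g_1^{r-2}(x_1), g_2(g_1^{r-2}(x_1))) = 0$ in $\k[x_1]$; since $g_1^{r-2}(x_1)$ is non-constant, this forces $P(y, g_2(y)) \equiv 0$ in $\k[y]$, and transferring back to the original coordinates yields $g_1(t, g_2(t), g_3(t)) = \zeta$ in Case~1 and $g_i(g_1, g_2) = \zeta_i$ in Case~4. A secondary difficulty will be ensuring that degenerate sub-cases in which $\tau$ drops several degrees at once (for instance $\tau_2 \in \k[x_1]$ or $\tau_3 \in \k[x_1, x_2]$) genuinely collapse into one of the four listed forms rather than producing an overlooked fifth case; this requires checking, in each permutation case, that all combinations of support restrictions are subsumed by Cases~1--4.
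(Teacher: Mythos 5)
Your plan has two genuine gaps, and they sit exactly at the points where the actual content of the lemma lies. First, the advertised matching of the six permutations to the four cases is not correct: the cases are distinguished by which variables the $g_i$ really involve, i.e.\ by degeneracies of $\tau$, not by $\sigma$ alone. For example, Case~(1) also arises from the transposition $g=(\tau_3,\tau_2,\tau_1)$ when $\tau_2\in\k[x_1]$; Case~(2) also arises from the $3$-cycle $g=(\tau_2,\tau_3,\tau_1)$ when $\tau_2\in\k[x_1]$ and $\tau_3\in\k[x_1,x_3]$; and Case~(4) arises from the $3$-cycle $g=(\tau_2,\tau_3,\tau_1)$ with $\tau_3\in\k[x_1,x_2]$, which your assignment does not predict. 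What you postpone as a ``secondary difficulty'' is therefore the bulk of the combinatorics. The paper avoids this trap by sorting the proof according to which of $g_1,g_2,g_3$ lie in $\k[x_1]$, and by exploiting that $g$ (hence $\tau$) is non-dominant together with the dominance criterion for triangular endomorphisms (Lemma~\ref{Lemm:DominantTriangular}) to pin down the supports.

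Second, your central tool --- ``the leading coefficient is typically non-zero'' via Zariski-density of the substituted tuple --- is unavailable precisely where it is needed: a component of $g^r$ can only become constant because the substituted tuple is \emph{not} jointly dominant, and then the top-variable coefficient can genuinely vanish under substitution. Concretely, for $\sigma=\mathrm{id}$ take $g=(x_1,\,x_1^2,\,(x_2-x_1^2)(x_3+1))$: all components are non-constant, $(g^2)_3=0$, yet $g_3\notin\k[x_1,x_2]$, so the collapse you assert for the identity permutation (``only Case~(3) can occur'') cannot be extracted from $x_2$- and $x_3$-degree bookkeeping; this configuration meets every hypothesis you use, so excluding it needs input your sketch never invokes (in the intended application $g$ is a $\mu$-leading part of a permutation-triangular automorphism as in Lemma~\ref{Lem:PermTriangularNotAlgStab}, so a component involving $x_3$ has the form $\xi x_3+q(x_1,x_2)$ with $\xi\in\k^*$, and only then does the degree induction --- which the paper also relies on at the corresponding step --- go through). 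The same soft spot undermines your extraction of the identities. In Case~(4) the statements $g_1(g_1,g_2)=\zeta_1$, $g_2(g_1,g_2)=\zeta_2$ concern the non-dominant planar map $(g_1,g_2)$ being constant on the closure of its image; the paper proves this by a geometric argument on that image curve (and likewise handles Case~(1) via the image $\A^1\times\Gamma$ and its image curve $\Gamma'$), not by a univariate substitution of the shape $P\bigl(A(x_1),g_2(A(x_1))\bigr)=0$, which does not match the composition structure once $g_1,g_2$ are genuinely bivariate or once the minimal $r$ exceeds $2$ and the inner arguments are multivariate iterates whose needed transcendence over $\k(x_1)$ you do not establish. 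Finally, the identity you propose to prove in Case~(1), $g_1(t,g_2(t),g_3(t))=\zeta$, is weaker than the one required, namely $g_1(t,g_2(x_1),g_3(x_1))=\zeta$ with $t$ and $x_1$ independent; your ``expand in powers of the first argument'' remark targets the right statement for $r=2$, but the written recipe does not.
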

\begin{proof}
We distinguish some cases, depending on which of the polynomials $g_1,g_2,g_3$ belong to $\k[x_1]$.

We first observe that $g_1,g_2,g_3\in \k[x_1]$ is impossible, as each component of $g^r$, for each 
$r\ge 1$, would then be obtained by composing dominant endomorphisms of $\A^1$ and 
thus would not be constant.

\begin{itemize}
 \item Suppose that $g_1,g_3\in \k[x_1]$. By induction, we obtain $(g^r)_1,(g^r)_3\in \k[x_1]\setminus \k$ for each $r\ge 1$, so $(g^r)_2\in \k$ for some $r\ge 2$. If $g_2\in \k[x_1,x_3]$, we obtain \ref{Iterzero2}. Otherwise, $\deg_{x_2}(g_2)=d\ge 1$ and proceeding by induction we obtain $\deg_{x_2}((g^r)_2)=d^r\ge 1$ for each $r\ge 1$, impossible.

\item If $g_1,g_2\in \k[x_1]$ we do the same argument as before (by exchanging the roles of $x_2$ and $x_3$) and obtain \ref{Iterzero3}.

\item Suppose now that $g_2,g_3\in \k[x_1]$.
As $g_1\in \k[x_1,x_2,x_3]\setminus \k[x_1]$,  the closure of the image of $g\in \End(\A^3)$ is then equal to $\A^1\times \Gamma$, where $\Gamma\subset \A^2$ is the irreducible  curve 
that is the closure of the image of $\A^1\to \A^2$, $x_1\mapsto (g_2(x_1),g_3(x_1))$. The restriction of $g$ gives an endomorphism $h=g|_{\A^1\times \Gamma}\in \End(\A^1\times \Gamma)$. 

We now prove that $h$ is not dominant. 
For each $r\ge 1$ and each $i\in \{1,2,3\}$, the restriction of $(g^r)_{i}$ to $\A^1\times \Gamma$ is equal to $\pi_i\circ h^r$, where $\pi_i\colon \A^1\times \Gamma\to \A^1$ is the $i$-th projection. 
Choosing $i$ and $r$ such that $(g^r)_i$ is constant, we find that $\pi_i\circ h^r$ is constant, so $h^r$ is not dominant, as $\pi_i$ is dominant. This proves that $h$ is not dominant.  

Denote by $\Gamma'\subset \A^1\times \Gamma$ the closure of $h(\A^1\times \Gamma)$, which is an irreducible curve, that contains $\set{(g_1(x,g_2(y),g_3(y)),g_2(x),g_3(x)) }{(x,y)\in \A^2}$. 
This implies that the polynomial $s=g_1(x,g_2(y),g_3(y))\in \k[x,y]$ is contained in $\k[x]$. 
We moreover observe that $s$ is a constant. Indeed, otherwise the restriction of $h$ to $\Gamma'$ would be a dominant map $\Gamma'\to \Gamma'$ and since $\pi_i |_{\Gamma'} \colon \Gamma' \to \AA^1$ 
is non-constant for each $i \in \{1, 2, 3\}$, the restriction of $(g^r)_i$ to $\Gamma'$ 
would be non-constant for each $r\ge 1$ and each $i\in\{1,2,3\}$, contradiction. 
Hence, $\pi_1\circ h=g_1|_{\A^1\times \Gamma}\colon \A^1\times \Gamma\to \A^1$ is equal to a constant $\zeta\in \k$. 
This yields$g_1(t,g_2,g_3)=\zeta$ for each $t\in \k$ and implies that
$g_1\not\in \k[x_1,x_2]\cup \k[x_1,x_3]$, since $g_1, g_2, g_3$ are non-constant,
whence~\ref{Iterzero1}.

\item It remains to assume that at most one of the $g_i$ belongs to $\k[x_1]$. We write $\tau=(\tau_1,\tau_2,\tau_3)$, and observe that $\{g_1,g_2,g_3\}=\{\tau_1,\tau_2,\tau_3\}$.
As $\tau_1\in \k[x_1] \setminus \k$,
we get that exactly one of the $g_i$ belongs to $\k[x_1]$ and that $\tau_2\in \k[x_1,x_2]\setminus \k[x_1]$. As $g$ is not dominant, neither is $\tau$;  Lemma~\ref{Lemm:DominantTriangular} then implies that $\tau_3\in \k[x_1,x_2]$. 
So $g_1,g_2,g_3\in \k[x_1,x_2]$ and exactly one of the three belongs to $\k[x_1]$. Note that the endomorphism $h=(g_1,g_2)\in \End(\A^2)$ is not dominant. Indeed, otherwise no component of $g^r$ is constant for each $r\ge 1$, as  $g_3\in \k[x_1,x_2]$ is non-constant. 
It is thus impossible that $g_1\in \k[x_1]$ or $g_2\in \k[x_1]$, as $(g_1,g_2)$ (respectively $(g_2,g_1)$) would be a dominant triangular endomorphism of $\A^2$ (Lemma~\ref{Lemm:DominantTriangular}). 
Hence, $g_3\in \k[x_1] \setminus \k$ and $g_1,g_2\in \k[x_1,x_2]\setminus \k[x_1]$. 
As $h$ is not dominant, the closure of $h(\A^2)$ is an  irreducible curve $\Gamma\subset \A^2$. 

If $g_j(\Gamma)$ is not a point for $j=1$ or $j=2$, then the  restriction 
$h |_\Gamma \colon \Gamma \to \Gamma$
would be dominant.  As $g_3$ is not constant on $\Gamma$ 
(because $g_3(g_1(x_1,x_2))$ is not constant), we get that $(g^r)_i$ is non-constant for each $r \geq 1$
and each $i \in \{1, 2, 3\}$, contradiction. Thus $g_i(\Gamma) = \{\zeta_i\}$ for $i=1,2$
where $\zeta_i \in \k$. This gives \ref{Iterzero4}. \qedhere
\end{itemize}
\end{proof}

\begin{lemma}\label{Lem:PermTriangularNotAlgStab}
Let $f=\sigma\circ \nu\in \Aut(\A^3)$ be a permutation-triangular  automorphism, where $\sigma\in \Sym(\A^3)$ and $\nu\in\TAut(\A^3)$. Suppose that the maximal eigenvalue $\theta$ of $f$ is bigger than $1$ and let $\mu$ be  a maximal eigenvector of $f$ such that $f$ is not $\mu$-algebraically stable. Then, one of the following cases holds:
\begin{enumerate}[$(i)$, leftmargin=*]
\item\label{PerNotAlgStab1} 
$f=(\xi_3 x_3+p_3(x_1,x_2),p_1(x_1),\xi_2 x_2+p_2(x_1))$ where 
$\xi_2,\xi_3\in \k^*$, $p_1,p_2\in \k[x_1]$, $p_3\in \k[x_1,x_2]$, $\deg(p_1)=1$, 
and $\deg(p_2)=\theta^2>1$. Moreover, there exists $s\in \k[x_2]$ such that the conjugation of 
$f$ by $(x_1,x_2,x_3+s(x_2))$ does not increase the degree of $p_3$ and $($strictly$)$ decreases  
the degree of $p_2$.

\item\label{PerNotAlgStab2}
$f=(\xi_2 x_2+p_2(x_1),\xi_3 x_3+p_3(x_1,x_2),p_1(x_1))$ where $\xi_2,\xi_3\in \k^*$, 
$p_1,p_2\in \k[x_1]$, $p_3\in \k[x_1,x_2]$, $\deg(p_1)=1$, and $\deg(p_2)=\theta>1$. Moreover, 
there exists $s\in \k[x_1]$ such that the conjugation of $f$ by $(x_1,x_2+s(x_1),x_3)$ 
$($strictly$)$ decreases the degrees of $p_2$ and $p_3$.
\end{enumerate}

\end{lemma}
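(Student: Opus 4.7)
The plan is to apply Lemma~\ref{Lem:PossibilitiesForgrzero} to the $\mu$-leading part $g$ of $f$ and then perform a case analysis over the six permutations $\pi$ of $\{1,2,3\}$. Writing $\sigma = (x_{\pi(1)}, x_{\pi(2)}, x_{\pi(3)})$ and $\nu=(\nu_1,\nu_2,\nu_3)$ with $\nu_j \in \k[x_1,\ldots,x_j]$ satisfying $\deg_{x_j}(\nu_j) = 1$ (by Lemma~\ref{Lemm:DominantTriangular}\ref{Lemm:DominantTriangular_auto}), one has $f_i = \nu_{\pi(i)}$. Let $g = (g_1, g_2, g_3)$ be the $\mu$-leading part of $f$; by Lemma~\ref{Lem:MaximalEigenVectorLeadingpartNotZero} each $g_i$ is non-constant and $\deg_\mu(g) = \theta$. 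Setting $\tau_j$ to be the $\mu$-homogeneous part of $\nu_j$ of degree $\theta \mu_{\pi^{-1}(j)}$, we have $g = \sigma \circ \tau$ with $\tau = (\tau_1, \tau_2, \tau_3)$ triangular. The failure of $\mu$-algebraic stability combined with Lemma~\ref{Lemma:Dynamicaldegreeestimate}\ref{degmudyn2} yields some integer $r \geq 2$ (the case $r=1$ is excluded since the $g_i$ are non-constant) with $(g^r)_i = 0$ for every $i$ such that $\mu_i > 0$, which is precisely the hypothesis of Lemma~\ref{Lem:PossibilitiesForgrzero} applied to $g$.

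Next I match the six permutations $\pi$ against cases~\ref{Iterzero1}--\ref{Iterzero4} of Lemma~\ref{Lem:PossibilitiesForgrzero}. The key restrictions are: only $\tau_3$ can involve $x_3$; for $\tau_1 = a x_1$ to be non-constant, one needs $\mu_1 = \theta \mu_{\pi^{-1}(1)}$; and having a component of $g$ lie in $\k[x_1]$ while $\mu_i > 0$ produces iterates $(g^r)_i$ that never vanish, contradicting the stability failure. Case~\ref{Iterzero1} forces $\pi(1) = 3$, and the sub-case $\pi = (3, 2, 1)$ is eliminated because the chain $\mu_1 = \theta \mu_3$ and $\mu_3 = \theta \mu_1$ forces $\mu_1 = \mu_3 = 0$, preventing $\tau_3$ from involving $x_2$; hence $\pi = (3, 1, 2)$ and $f = (\nu_3, \nu_1, \nu_2)$, which matches form~$(i)$ of the statement. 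Case~\ref{Iterzero4} forces $\pi(3) = 1$, and the sub-case $\pi = (3, 2, 1)$ is eliminated because $\mu_2 = 0$ is forced and the identity $g_1(g_1, g_2) = \zeta_1$ is then incompatible with $g_1$ being non-constant; hence $\pi = (2, 3, 1)$ and $f = (\nu_2, \nu_3, \nu_1)$, matching form~$(ii)$. Cases~\ref{Iterzero2} and~\ref{Iterzero3} are ruled out: each admissible sub-case either forces $\mu = 0$ or makes some iterate $(g^r)_i$ with $\mu_i > 0$ non-zero for every $r$.

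I extract the numerical data. In case~$(i)$, $\nu_1 = p_1(x_1) = a x_1 + b$ is affine so $\deg(p_1) = 1$; combining $\mu_1 = \theta \mu_2$ and $\mu_3 = \theta \mu_1 = \theta^2 \mu_2$ with the fact that $\tau_2 \in \k[x_1]$ is $\mu$-homogeneous of degree $\theta \mu_3 = \theta^3 \mu_2$ forces $\tau_2 = \beta x_1^{\theta^2}$ for some $\beta \neq 0$, whence $\deg(p_2) = \theta^2$. Writing $\tau_3 = \xi_3 x_3 + \sum c_{k, l} x_1^k x_2^l$ where the sum runs over $(k, l)$ with $l \geq 1$ and $k\theta + l = \theta^2$, the condition $g_1(t, g_2, g_3) = \zeta$ from case~\ref{Iterzero1} forces $c_{k, l} = 0$ for $k \geq 1$ and $c_{0, \theta^2} = -\xi_3 \beta a^{-\theta^2}$, so the monomial $x_2^{\theta^2}$ appears in $p_3$ and $\deg(p_3) = \theta^2$. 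Case~$(ii)$ is analogous: the chain $\mu_1 = \theta \mu_3$, $\mu_2 = \theta \mu_1 = \theta^2 \mu_3$ yields $\deg(p_2) = \theta$, and the relation $g_1(g_1, g_2) = 0$ from case~\ref{Iterzero4} forces the $\mu$-leading of $p_3$ to equal $-\alpha \xi_2^{-1}(\xi_2 x_2 + \alpha x_1^\theta)^\theta$, where $\alpha$ is the leading coefficient of $p_2$.

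Finally I construct $s$. In case~$(i)$, set $s(x_2) = -\beta a^{-\theta^2} x_2^{\theta^2} \in \k[x_2]$ and conjugate by $(x_1, x_2, x_3 + s(x_2))$: the new $p_2$ becomes $p_2(x_1) - \beta a^{-\theta^2} (a x_1 + b)^{\theta^2}$, whose leading term $\beta x_1^{\theta^2}$ cancels, so $\deg(p_2)$ strictly drops; the new $p_3$ becomes $p_3(x_1, x_2) + \xi_3 \beta a^{-\theta^2} x_2^{\theta^2}$, altering only the coefficient of the monomial $x_2^{\theta^2}$ already present in $p_3$, so $\deg(p_3)$ does not increase. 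In case~$(ii)$, set $s(x_1) = \alpha \xi_2^{-1} x_1^\theta \in \k[x_1]$ and conjugate by $(x_1, x_2 + s(x_1), x_3)$: the new $p_2$ is $p_2 - \xi_2 s(x_1) = p_2 - \alpha x_1^\theta$, of strictly smaller degree, while the substitution $x_2 \mapsto x_2 - s(x_1)$ collapses the $\mu$-leading of $p_3$ to $-\alpha \xi_2^{\theta - 1} x_2^\theta$ of total degree $\theta$ and the counter-term $s(\xi_2 x_2 + p_2'(x_1))$ has total degree at most $\theta(\theta - 1) < \theta^2$, so $\deg(p_3)$ drops strictly from $\theta^2$. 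The main obstacle is the second paragraph: systematically eliminating the $24$ possible pairs of permutation and sub-case requires careful exploitation of $\mu$-homogeneity and, in a few sub-cases, explicit computation of $g^2$ or $g^3$ to detect a non-vanishing iterate.
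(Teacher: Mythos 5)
Your argument is correct and follows essentially the same route as the paper: pass to the $\mu$-leading part $g=\sigma\circ\tau$, invoke Lemma~\ref{Lem:PossibilitiesForgrzero} (with $r\ge 2$ since the $g_i$ are non-constant by Lemma~\ref{Lem:MaximalEigenVectorLeadingpartNotZero}), discard cases~\ref{Iterzero2}--\ref{Iterzero3} by the same vanishing/weight argument, and in cases~\ref{Iterzero1} and~\ref{Iterzero4} extract the same permutations, the same degree data and exactly the same conjugating polynomials $s$ as the paper (your elimination of the wrong sub-permutations via the weight chains, and of the $f_2=\nu_2$ sub-case of~\ref{Iterzero4} via $\mu_2=0$, are valid variants of the paper's micro-arguments). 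The only point the paper treats more carefully is the strict drop of $\deg(p_3)$ in case~\ref{PerNotAlgStab2}: besides the collapsed $\mu$-leading part and the counter-term $s(\xi_2x_2+p_2')$, one must also bound the sub-leading part of $p_3$ after the substitution $x_2\mapsto x_2-s(x_1)$, which follows from the bound $k+i\theta<\theta^2$ on its monomials $x_1^kx_2^i$ (the paper's decomposition $\Delta=\sum_{i=0}^{\theta-1}x_2^i\Delta_i$) --- a routine check with the weights you already use.
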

\begin{proof}
Denote by $g=(g_1,g_2,g_3)$ the $\mu$-leading part of $f$. As 
$\mu=(\mu_1,\mu_2,\mu_3)\in (\r_{\ge 0})^3\setminus \{0\}$ is a maximal eigenvector of $f$,  
$g_i\not\in \k$ for each $i\in \{1,2,3\}$ (Lemma~\ref{Lem:MaximalEigenVectorLeadingpartNotZero}).
Moreover, as $f$ is not $\mu$-algebraically stable, there is some $r\ge 1$ 
such that $(g^r)_i=0$ 
for all $i\in \{1,2,3\}$ with $\mu_i>0$ (Lemma~\ref{Lemma:Dynamicaldegreeestimate}\ref{degmudyn2}). 
We write $g=\sigma \circ \tau$ where $\tau=(\tau_1,\tau_2,\tau_3)\in\TEnd(\A^3)$; Lemma~\ref{Lem:PossibilitiesForgrzero} gives then four possibilities \ref{Iterzero1}-\ref{Iterzero2}-\ref{Iterzero3}-\ref{Iterzero4} for $g$, that we consider separately. 
We will show that \ref{PerNotAlgStab1} and \ref{PerNotAlgStab2} occur
in Cases~\ref{Iterzero1} and \ref{Iterzero4}, respectively 
and that \ref{Iterzero2}-\ref{Iterzero3} do not occur.

\ref{Iterzero2}-\ref{Iterzero3}: Let us first observe that Case \ref{Iterzero2} (respectively \ref{Iterzero3}) of Lemma~\ref{Lem:PossibilitiesForgrzero} does not occur. Indeed, otherwise the
first and the last (respectively the first two) components of $g^r$ belong to $\k[x_1]\setminus \k$ for each $r\ge 1$, so $\mu=(0,\mu_2,0)$ (respectively $\mu=(0,0,\mu_3)$), since $(g^r)_i = 0$
for each $i \in \{1, 2, 3\}$ with $\mu_i > 0$.  This gives $\deg_\mu(g_i)=0$ for $i=1,2,3$, as $g_1,g_2,g_3$ belong to $\k[x_1,x_3]$ (respectively $\k[x_1,x_2]$), impossible as $\deg_\mu(g)=\deg_\mu(f)=\theta>1$ (Lemma~\ref{Lem:MaximalEigenVectorLeadingpartNotZero}).

\ref{Iterzero1}: Suppose now that Case \ref{Iterzero1} of Lemma~\ref{Lem:PossibilitiesForgrzero} occurs:
As $g_1\in \k[x_1,x_2,x_3]\setminus (\k[x_1,x_2]\cup \k[x_1,x_3])$ and since the monomials of $g_1$ are some of those of $f_1$, that is one of the coordinates of the triangular automorphism $\nu\in \TAut(\A^3)$, the polynomial $f_1$ is equal to the third coordinate of $\nu$ 
and $g_1$ is of the form $g_1=\xi_3 x_3+q(x_1,x_2)$ for some $\xi_3\in \k^*$ and $q\in \k[x_1,x_2]\setminus \k[x_1]$.  Since $g_1(t,g_2,g_3)=\zeta \in \k$ for each $t\in \k$, we obtain $\xi_3 g_3+q(t,g_2)=\zeta$
for each $t\in \k$, so $q\in \k[x_2]\setminus \k$ and
\[g=(\xi_3 x_3+q(x_2),g_2,(\zeta-q(g_2))/\xi_3),\]
where $g_2\in \k[x_1]$.
 By definition (Definition~\ref{Def:LeadingPart}), $g_i$ is the $\mu$-homogeneous part of $f_i$ of degree $\theta\mu_i$, for each $i\in \{1,2,3\}$ so each monomial of $g_i$ is of $\mu$-degree $\theta\mu_i$. The explicit form of $g_1,g_2,g_3$ directly gives
\[
 	\theta\mu_1=\mu_3 =\deg(q)\mu_2\, , \ \theta\mu_2=\deg(g_2)\mu_1 
 	\ \text{ and } \ \theta\mu_3=\deg(g_3)\mu_1=\deg(q) \deg(g_2)\mu_1 \, .
\] 
In particular, $\mu_1,\mu_2,\mu_3\in \r_{>0}$ and $\deg(g_3)=\deg(q)\deg(g_2)=
\theta^2>1$. Since two monomials in the same variables have distinct $\mu$-degrees, we moreover find that $q$, $g_2$ and $g_3$ are monomials, so $\zeta=0$.

One component of $f$ (and of $\tau$) belongs to $\k[x_1]$ and is of degree $1$. As $g_1\not\in\k[x_1]$ and $\deg(g_3)>1$, we find that $f_2\in \k[x_1]$ is of degree $1$. This yields $\sigma=(x_3,x_1,x_2)$ and $\deg(f_2)=\deg(g_2)=1$, whence $\deg(q)=\deg(g_3)=\theta^2>1$. We obtain the form given in \ref{PerNotAlgStab1}: the automorphism $f$ is 
equal to \[f=(\xi_3 x_3+p_3(x_1,x_2),p_1(x_1),\xi_2 x_2+p_2(x_1))\] where 
$\xi_2,\xi_3\in \k^*$, $p_1,p_2\in \k[x_1]$, $p_3\in \k[x_1,x_2]$, $\deg(p_1)=1$. Moreover, $g_3=-q(g_2)/\xi_3\in \k[x_1]$ is the $\mu$-leading part of $f_3=\xi_2 x_2+p_2(x_1)$, so $g_3$ is only one monomial, of degree 
$\theta^2 = \deg(g_3)=\deg(p_2)$.

To prove that we are indeed in Case~\ref{PerNotAlgStab1}, it remains to show that the conjugation by  $h=(x_1,x_2,x_3+\xi_3^{-1}q(x_2))$ does not increase the degree $p_3$ and strictly decreases the degree of $p_2$. We calculate \[h\circ f \circ h^{-1}=(\xi_3 x_3+p_3(x_1,x_2)-q(x_2),p_1(x_1),\xi_2 x_2+p_2(x_1)+q(p_1(x_1))/\xi_3).\]
 As every monomial of $g_1=\xi_3 x_3+q(x_2)$ is contained in $f_1=\xi_3 x_3+p_3(x_1,x_2)$, the degree of $p_3(x_1,x_2)-q(x_2)$ is at most the one of $p_3(x_1,x_2)$. It remains to see that $\deg(p_2+q(p_1)/\xi_3)<\deg(p_2)$ which follows from the fact that  $g_3=-q(g_2)/\xi_3\in \k[x_1]$ is the $\mu$-leading part of $f_3=\xi_2 x_2+p_2(x_1)$, and that $g_2$ is the leading monomial of $p_1$ (of degree $1$.

  \ref{Iterzero4}: It remains to consider Case \ref{Iterzero4} of Lemma~\ref{Lem:PossibilitiesForgrzero}. 
  As $g_1,g_2\in \k[x_1,x_2]\setminus \k[x_1]$, the 
  only component of $f$ which belongs to $\k[x_1]$ (and is of
  degree $1$) is $f_3$, so $\sigma=(x_3,x_2,x_1)$ or $\sigma=(x_2,x_3,x_1)$. Let $j\in \{1,2\}$ be such that $f_j= \nu_2$, where $\nu = (\nu_1, \nu_2, \nu_3)$. 
  We then have $f_j=\xi_2 x_2 +p_2(x_1)$ for some $\xi_2\in \k^*$ and some $p_2\in \k[x_1]$.
   As 
  $g_j\in \k[x_1,x_2]\setminus \k[x_1]$, we get $g_j=\xi_2 x_2 +q(x_1)$ for some $q\in \k[x_1]$, 
that consists of some monomials of $p_2$. 
Since $\zeta_j=g_j(g_1,g_2)$, we obtain $\zeta_j=\xi_2 g_2+q(g_1)$.

We now show that $j=2$
  leads to a contradiction. It gives
  \[g_2=\xi_2 x_2 +q(x_1)=\xi_2^{-1}(\zeta_2-q(g_1)).\]
 Since $\xi_2 x_2 +q(x_1)$ is irreducible, the polynomial $\zeta_2-q(g_1)$ is irreducible, and thus $\deg(q)=1$, which in turn implies that  $g_2$ and thus $g_1$ is of degree $1$. Hence, $g_1,g_2,g_3$ are of degree $1$, 
 impossible, as  $\theta>1$ is the eigenvalue of a 
  matrix that is contained in $g$ (Lemma~\ref{Lemm:DegreeEndhom}).  
  
  This contradiction proves that $j=1$, so $\sigma=(x_2,x_3,x_1)$. This yields
  \[f=(\xi_2 x_2+p_2(x_1),\xi_3 x_3+p_3(x_1,x_2),p_1(x_1))\] where $\xi_2,\xi_3\in \k^*$, 
$p_1,p_2\in \k[x_1]$, $p_3\in \k[x_1,x_2]$ and $\deg(p_1)=1$, as in \ref{PerNotAlgStab2}.

We also have $g_1=\xi_2 x_2+q(x_1)$ and $\zeta_1=\xi_2 g_2+q(g_1)$, which yields $g_2=(\zeta_1-q(g_1))/\xi_2=(\zeta_1-q(\xi_2 x_2 +q(x_1)))/\xi_2$.
As $g$ is the  
  $\mu$-leading part of $f$, the polynomial $g_2$ is not constant (Lemma~\ref{Lem:MaximalEigenVectorLeadingpartNotZero}), so $\deg(q)\ge 1$.  
Recall that $g_i$ is the $\mu$-homogeneous part of $f_i$ of degree $\theta\mu_i$, 
  for each $i\in \{1,2,3\}$ (Definition~\ref{Def:LeadingPart}) 
  so each monomial of $g_i$ is of $\mu$-degree $\theta\mu_i$.  We thus obtain
  \[
  		\theta\mu_1=\mu_2 =\deg(q)\mu_1 
  		 \quad \text{ and } \quad \theta\mu_3=\mu_1 \, .
  \]
 This proves that $\mu_1,\mu_2,\mu_3\in \r_{>0}$, that $\deg(q)=\theta>1$ and that $\mu=(\theta\mu_3,\theta^2\mu_3,\mu_3)$. 

Since two monomials in the same variables have distinct $\mu$-degrees, we moreover find that $q$ is a monomial, the leading monomial of $p_2$, so $\deg(p_2)=\deg(q)=\theta>1$, as stated in \ref{PerNotAlgStab2}.

To prove that we are indeed in Case~\ref{PerNotAlgStab2}, it remains to show that the conjugation by  $h=(x_1,x_2+q(x_1)/\xi_2,x_3)$ 
strictly decreases the degree of $p_2$ and $p_3$. 
We calculate \[h\circ f \circ h^{-1}=(\xi_2 x_2+p_2'(x_1),\xi_3 x_3+p_3'(x_1,x_2),p_1(x_1)),\] where
  \[
  	\begin{array}{rcl}
  		p_2'(x_1)&=&p_2(x_1)-q(x_1),\\
  		p_3'(x_1,x_2)&=&p_3(x_1,x_2-q(x_1)/\xi_2)+q(\xi_2 x_2+p_2'(x_1))/\xi_2 \, .
  	\end{array}
  \]
  As $q$ is the leading monomial of $p_2$, this conjugation decreases the degree of $p_2$, i.e.~$\deg (p_2')<\deg (p_2)=\theta$. 
  It remains to see that $\deg (p_3')< \deg (p_3)$. To simplify the calculations, 
  we replace $\mu$ by a multiple of itself (this is still a maximal eigenvector) and 
  may assume that $\mu=(1,\theta,\theta^{-1})$. 
  As $g_2=(\zeta_1-q(\xi_2 x_2 +q(x_1)))/\xi_2$ is the $\mu$-homogeneous part of $f_2=\xi_3 x_3+p_3(x_1,x_2)$ of $\mu$-degree $\theta\mu_2=\theta^2$, the polynomial $\Delta=p_3-g_2\in \k[x_1,x_2]$ is equal to 
  \[
  	\Delta=\sum\limits_{i=0}^{\theta-1} x_2^i\Delta_i
  \]
  where each $\Delta_i\in \k[x_1]$ is such that $\deg(\Delta_i)+i\theta< \theta^2$. 
  As $\theta>1$, this implies that $\deg(x_2^i\Delta_i)=i+\deg(\Delta_i)<\theta^2$ for each $i$, so $\deg(\Delta)<\theta^2$, which implies that
  the degree of $p_3=\Delta+g_2$ is equal to $\theta^2$, 
  since $\deg(g_2)=\theta^2$. 
  We then need to show that 
  $\deg(p_3')< \theta^2$. 
  Since $\deg(p_2')<\deg(q)=\theta$, we have $\deg(q(\xi_2 x_2+p_2'(x_1))/\xi_2)<\theta^2$,
  so we only need to 
  show that $\deg(p_3(x_1,x_2-q(x_1)/\xi_2))<\theta^2$. This is given by
  \[
  	\begin{array}{rcl}
		p_3(x_1,x_2-q(x_1)/\xi_2)
		&=& \Delta(x_1,x_2-q(x_1)/\xi_2)+g_2(x_1,x_2-q(x_1)/\xi_2).\\
  		&=& \sum\limits_{i=0}^{\theta-1} (x_2-q(x_1)/\xi_2)^i\Delta_i+(\zeta_1-q(\xi_2 x_2))/\xi_2
  	\end{array}
  \]
  and by the fact that $\deg(\Delta_i)+i\theta< \theta^2$ for each $i$.
\end{proof}
\begin{example}\label{Exa:PermTriangularNotAlgStab}
	We now give two distinct examples to show that Cases~\ref{PerNotAlgStab1}-\ref{PerNotAlgStab2} of Lemma~\ref{Lem:PermTriangularNotAlgStab} indeed occur.
\begin{enumerate}[leftmargin=*]
\item[\ref{PerNotAlgStab1}]
Let $n\ge 2$, and let $f=(x_3-x_2^n,x_1,x_2+x_1^n)\in \Aut(\A^3)$. Because of the matrix contained in $(x_3,x_1,x_1^n)$, the maximal eigenvalue satisfies $\theta\ge \sqrt{n}>1$ and as $f^2=(x_2,x_3-x_2^n,x_1+(x_3-x_2^n)^n)$ and $f^3=(x_1,x_2,x_3)$, the  map
$f$ is not $\mu$-algebraically stable for any maximal eigenvector $\mu$ of $f$. 
It has then to satisfy Case~\ref{PerNotAlgStab1} of Lemma~\ref{Lem:PermTriangularNotAlgStab}, so $\theta=\sqrt{n}$.
\item[\ref{PerNotAlgStab2}]
Let $n\ge 2$, and let $f=(x_2-x_1^n,x_3+(x_2-x_1^n)^n,x_1)\in \Aut(\A^3)$. 
Because of the matrix contained in $(-x_1^n,x_3,x_1)$, the 
maximal eigenvalue satisfies $\theta\ge n>1$ and as $f^2=(x_3,x_1+x_3^n,x_2-x_1^n)$ and $f^3=(x_1,x_2,x_3)$, the element $f$ is not $\mu$-algebraically stable
for each maximal eigenvector $\mu$ of $f$. 
It has then to  satisfy Case~\ref{PerNotAlgStab2} of Lemma~\ref{Lem:PermTriangularNotAlgStab}, so $\theta=n$.
\end{enumerate}

\end{example}
We now give examples of permutation-triangular automorphisms of $\A^3$ which are $\mu$-algebraically stable. These will be useful in the proof of Theorem~\ref{Thm:AffTriang3}.

\begin{lemma}\label{Lemm:TwoBasicExamplesThatwork}
For all $a,b,c\in \NN$ such that $\lambda=\frac{a+\sqrt{a^2+4bc}}{2}\not=0$, the maximal eigenvalue and the dynamical degree of the automorphisms
\[
	f=(x_1^ax_2^b+x_3,x_2+x_1^c,x_1) \text{ and }f'=(x_3+x_1^ax_2^{bc},x_1,x_2)
\] 
are equal to $\lambda$. Both automorphisms 
are $\mu$-algebraic stable for each maximal eigenvector $\mu$.
\end{lemma}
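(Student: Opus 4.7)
The plan is to apply Proposition~\ref{Prop:MonomialEig} to both $f$ and $f'$, using that $\lambda$ is the positive root of $x^2 - ax - bc = 0$ (so $\lambda^2 = a\lambda + bc$) and that $\lambda \neq 0$ forces $\lambda \geq 1$. First, I identify the matrices contained in each automorphism. For $f'$, only two matrices appear: a cyclic permutation (spectral radius $1$) and
\[
M' = \begin{pmatrix} a & bc & 0 \\ 1 & 0 & 0 \\ 0 & 1 & 0 \end{pmatrix},
\]
whose characteristic polynomial, expanded along the last column, equals $-t(t^2 - at - bc)$, so its spectral radius is $\lambda$. For $f$, four matrices are contained; the ``dominant'' one has rows $(a,b,0)$, $(c,0,0)$, $(1,0,0)$ and shares the same characteristic polynomial, while the other three have spectral radius at most $\max(a,1) \leq \lambda$ by direct inspection. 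Hence the maximal eigenvalue of both $f$ and $f'$ is $\lambda$.

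Next, I solve the equations $\deg_\mu(f_i) = \lambda \mu_i$ (and the analogue for $f'$) to describe all maximal eigenvectors $\mu = (\mu_1,\mu_2,\mu_3)$. For $f'$, the last two equations propagate $\mu_1 = \lambda \mu_2 = \lambda^2 \mu_3$, and the first equation is automatic using $\lambda^2 = a\lambda + bc$ together with $\lambda \geq 1$; hence $\mu$ is a positive scalar multiple of $(\lambda^2, \lambda, 1)$. For $f$, the third equation gives $\mu_1 = \lambda \mu_3$; when $c \geq 1$ the second equation forces $\mu_2 = c\mu_3$, while when $c = 0$ it forces $\mu_2 = 0$ (and then necessarily $\lambda = a$). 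In either case $\mu$ is a positive multiple of $(\lambda, c, 1)$.

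The main step is to verify $\mu$-algebraic stability. Assume first $\lambda > 1$. The $\mu$-leading parts (Definition~\ref{Def:LeadingPart}) are the monomial endomorphisms
\[
g = (x_1^a x_2^b, x_1^c, x_1) \quad\text{and}\quad g' = (x_1^a x_2^{bc}, x_1, x_2).
\]
Since the composition of monomial endomorphisms is again a monomial endomorphism with unit coefficients, every iterate $g^r$ and $(g')^r$ has all three components nonzero. By Proposition~\ref{Prop:MonomialEig}(3)(iii) both $f$ and $f'$ are therefore $\mu$-algebraically stable, and by Proposition~\ref{Prop:MonomialEig}(3)(i) their dynamical degrees equal $\theta = \lambda$. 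In the edge case $c = 0$ (which forces $\lambda = a$), the $\mu$-leading part of $f$ is $(x_1^a, x_2 + 1, x_1)$; its first and third components remain nonzero monomials $x_1^{a^r}$ and $x_1^{a^{r-1}}$ under iteration, and since $\mu_1, \mu_3 > 0$ the criterion of Proposition~\ref{Prop:MonomialEig}(3)(iii) still applies.

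Finally, when $\lambda = 1$ (only possible for $a+bc = 1$), Proposition~\ref{Prop:MonomialEig}(2) gives $\lambda(f), \lambda(f') \leq \theta = 1$, and dominance gives the matching lower bound, so $\lambda(f) = \lambda(f') = 1 = \lambda$; a direct computation in these few explicit cases checks that $\deg_\mu(f^r) = \deg_\mu(f)^r = 1$ for all $r$, giving the remaining $\mu$-algebraic stability. The only real obstacle is the bookkeeping of the degenerate cases $c = 0$ and $\lambda = 1$; the core of the argument is the observation that the $\mu$-leading parts are precisely the monomial endomorphisms corresponding to the matrix of maximal spectral radius, and such monomial endomorphisms have nonvanishing iterates.
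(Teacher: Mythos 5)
Your proof is correct and takes a genuinely different route from the paper's. You agree with the paper up to the identification of the matrices contained in $f$ and $f'$ and the computation of their spectral radii, which give the maximal eigenvalue $\lambda$. For the $\mu$-algebraic stability claim, the paper invokes Lemma~\ref{Lem:PermTriangularNotAlgStab} in contrapositive form: that lemma pins down, up to a rigid normal form, the permutation-triangular automorphisms of $\A^3$ with $\theta > 1$ that fail to be $\mu$-algebraically stable for some maximal eigenvector, and since $f$ and $f'$ match neither of the two allowed shapes, stability follows in one line, without ever computing an eigenvector or a leading part. You instead determine the maximal eigenvectors explicitly (a positive multiple of $(\lambda, c, 1)$ for $f$ and of $(\lambda^2, \lambda, 1)$ for $f'$ when $\theta > 1$), read off the $\mu$-leading parts as monomial maps, and apply Proposition~\ref{Prop:MonomialEig}\ref{Prop:MonomialEig3}\ref{MonomialEig3iii}. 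Your route is more self-contained (no dependence on the case analysis of Lemmas~\ref{Lem:PossibilitiesForgrzero} and~\ref{Lem:PermTriangularNotAlgStab}), at the cost of more bookkeeping; it also addresses the stability assertion in the boundary case $\theta = 1$, which the paper's invocation of Lemma~\ref{Lem:PermTriangularNotAlgStab} (hypothesised for $\theta > 1$) does not cover, even though the dynamical-degree equality there is immediate from Proposition~\ref{Prop:MonomialEig}\ref{Prop:MonomialEig2}. One small slip: in the $c = 0$ case you do not necessarily have $b = 0$, so the $\mu$-leading part of $f$ is $(x_1^a x_2^b, x_2 + 1, x_1)$ rather than $(x_1^a, x_2 + 1, x_1)$; the first and third components are still non-zero under iteration, so the conclusion is unaffected, but they are not the pure monomials $x_1^{a^r}$ you wrote.
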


\begin{proof}
The matrices that are contained in $f$ are 
\[
	\begin{pmatrix}
		a &  b& 0\\
		c & 0& 0\\
		1 & 0 &0
	\end{pmatrix}, 
	\begin{pmatrix}
		a &  b& 0\\
		0 & 1& 0\\
		1 & 0 &0
	\end{pmatrix}, 
	\begin{pmatrix}
		0 & 0& 1\\
		c & 0& 0\\
		1 & 0 &0
	\end{pmatrix}
	\text{ and } 
	\begin{pmatrix}
		0 & 0& 1\\
		0 & 1& 0\\
		1 & 0 &0
	\end{pmatrix}
\]
whose characteristic polynomials are $x(x^2-ax-bc)$, $x(x-a)(x-1)$, $x(x^2-1)$ and $(x+1)(x^2-1)$, respectively. The corresponding spectral radii are respectively $\lambda$, $a$, $1$ and~$1$. Hence, the maximal eigenvalue of $f$ is $\lambda$.

Similarly, the matrices contained in $f'$are 
\[
	\begin{pmatrix}
		a &  bc& 0\\
		1 & 0& 0\\
		0 & 1 &0
	\end{pmatrix}
\text{ and } 
	\begin{pmatrix}
		0 & 0& 1\\
		1 & 0& 0\\
		0 & 1 &0
	\end{pmatrix}
\]
whose characteristic polynomials  are $x(x^2-ax-bc)$ and $x^3-1$. The maximal eigenvalue of $f'$ is then also $\lambda$.

As neither $f$ nor $f'$ satisfies any of the two Cases \ref{PerNotAlgStab1}-\ref{PerNotAlgStab2} of Lemma~\ref{Lem:PermTriangularNotAlgStab}, both $f$ and $f'$ are $\mu$-algebraically stable for each maximal eigenvector $\mu$ (of $f$ and $f'$, respectively). This gives then $\lambda(f)=\lambda(f')=\lambda$ (Proposition~\ref{Prop:MonomialEig}) and achieves the proof.
\end{proof}
\begin{lemma}\label{Lemm:SetOfEigenvaluesAffT3}
The maximal eigenvalue $\theta$ of a permutation-triangular automorphism $f\in \Aut(\A^3)$ of degree $d\ge 1$ is a non-zero number equal to $(a+\sqrt{a^2+4bc})/2$ for some $(a,b,c)\in \NN^3$ where 
$a+b\le d$ and $c\le d$. 
It is thus a positive integer or a quadratic integer and a Handelman number.
\end{lemma}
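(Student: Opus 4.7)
The plan is to use the explicit permutation-triangular structure of $f$ to enumerate the matrices contained in it, then show that their spectral radii all have the required form. First I would write $f=\sigma\circ\tau$ with $\sigma\in\Sym(\A^3)$ and $\tau\in\TAut(\A^3)$ and apply Lemma~\ref{Lemm:DominantTriangular}\ref{Lemm:DominantTriangular_auto} to write
\[
	\tau=\bigl(\alpha_1 x_1+\beta_1,\ \alpha_2 x_2+p_2(x_1),\ \alpha_3 x_3+p_3(x_1,x_2)\bigr)
\]
with $\alpha_j\in\k^*$, $\deg p_2\le d$ and $\deg p_3\le d$. There is a permutation $\pi$ of $\{1,2,3\}$ with $f_i=\tau_{\pi(i)}$, so that row $i$ of any matrix $M$ contained in $f$ is the exponent vector of a monomial of $\tau_{\pi(i)}$. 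Explicitly: a row from $\tau_1$ must be $(1,0,0)$ or $(0,0,0)$; from $\tau_2$ it must be $(0,1,0)$ or $(c,0,0)$ with $0\le c\le\deg p_2\le d$; and from $\tau_3$ it must be $(0,0,1)$ or $(a,b,0)$ with $a,b\in\NN$ and $a+b\le\deg p_3\le d$. The permutation matrix of $\pi$ is automatically contained in $f$ (choose the linear monomial $\alpha_j x_j$ from each $\tau_j$) and has spectral radius $1$, so $\theta\ge 1>0$.

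The bulk of the proof is a case analysis on the six possible $\pi$. For $\pi\in\{\id,(1\,2),(2\,3)\}$, a direct inspection of $M$ shows that, regardless of the monomial choices, its associated weighted graph contains no cycle involving more than one vertex (only self-loops can arise), so the characteristic polynomial factors over $\QQ$ into linear factors with integer roots in $\{0,1,\ldots,d\}$; hence $\theta$ is a positive integer at most $d$, corresponding to $(a,0,0)$ in the statement. The three remaining permutations are the interesting ones. For $\pi=(1\,3)$, choosing the rows $(a,b,0),(c,0,0),(1,0,0)$ (from $\tau_3,\tau_2,\tau_1$ respectively) gives an $M$ with vanishing third column, and expansion along that column yields
\[
	\chi_M(\lambda)=\lambda\bigl(\lambda^2-a\lambda-bc\bigr),
\]
whose positive root is $(a+\sqrt{a^2+4bc})/2$, matching the statement with the constraints $a+b\le d$ and $c\le d$. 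For $\pi=(1\,2\,3)$ the analogous optimal rows $(0,1,0),(a,b,0),(1,0,0)$ give $\chi_M(\lambda)=\lambda(\lambda^2-b\lambda-a)$, corresponding to parameters $(b,a,1)$; and for $\pi=(1\,3\,2)$ the rows $(a,b,0),(1,0,0),(0,1,0)$ give $\chi_M(\lambda)=\lambda(\lambda^2-a\lambda-b)$, corresponding to $(a,b,1)$. In each of the three cyclic sub-cases, I would then check by inspecting the finitely many alternative monomial patterns that no other matrix contained in $f$ produces a strictly larger spectral radius.

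Once the characteristic polynomials are in hand, the conclusion is immediate: $\theta$ is the positive root of the monic polynomial $x^2-ax-bc\in\ZZ[x]$ for the parameters identified above, with $a+b\le d$ and $c\le d$. Since $\theta\ne 0$ we have $(a,bc)\ne(0,0)$; since the non-leading coefficients $-a,-bc$ are non-positive, $\theta$ is a Handelman number by definition, and it is a positive integer when $bc=0$ and otherwise a genuine quadratic integer. The main obstacle in executing this plan is bookkeeping: one must carefully pair each row position $i$ with the component $\tau_{\pi(i)}$ it comes from, keep track of which entries of $M$ are controlled by which monomial choices, and avoid conflating the roles of the parameters $a,b,c$ as one switches between the six permutations $\pi$.
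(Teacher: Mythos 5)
Your overall strategy --- enumerate the matrices contained in $f=\sigma\circ\tau$, use the zero third column to reduce to $2\times 2$ characteristic polynomials, and read off $(a,b,c)$ --- is exactly the paper's; the paper just organises the case analysis by the four shapes of matrices contained in $\tau$ and then permutes rows, rather than by the six permutations $\pi$. However, your execution has a genuine gap in the case $\pi=(1\,3\,2)$. There the rows may be chosen as $(0,0,1)$ (the monomial $x_3$ of $\tau_3$), $(1,0,0)$ (from $\tau_1$) and $(c,0,0)$ (a monomial $x_1^c$ of $p_2$), giving
\[
	M=\begin{pmatrix}0&0&1\\1&0&0\\c&0&0\end{pmatrix},\qquad \chi_M(x)=x(x^2-c),
\]
with spectral radius $\sqrt{c}$. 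This is not among the matrices you declared optimal, and it can strictly dominate them: for $f=(x_3+x_1,\,x_1,\,x_2+x_1^d)$ your ``optimal'' rows give $(a,b)=(1,0)$ and spectral radius $1$, while the matrix above gives $\sqrt{d}$. (Compare Example~\ref{Exa:PermTriangularNotAlgStab}, where exactly this matrix forces $\theta\ge\sqrt{n}$.) The value $\sqrt{c}=(0+\sqrt{4\cdot 1\cdot c})/2$ is still of the required form with $(a,b,c)=(0,1,c)$, so the lemma survives, but your proposed verification that ``no other matrix produces a strictly larger spectral radius'' would fail, and your parameter identification for this permutation is wrong. The paper avoids this by listing all seven possible $2\times 2$ blocks, including $\left(\begin{smallmatrix}0&r\\s&0\end{smallmatrix}\right)$ with characteristic polynomial $x^2-rs$.

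A second, more minor, inaccuracy: for $\pi\in\{(1\,2),(2\,3)\}$ it is false that the weighted graph of $M$ has only self-loops. If both permuted coordinates contribute their linear monomials (e.g.\ rows $(0,1,0)$ and $(1,0,0)$ for $\pi=(1\,2)$), one gets a genuine $2$-cycle and a factor $x^2-1$ in $\chi_M$; the spectral radius is still $1$, hence an integer, so the conclusion of that sub-case stands, but the stated justification does not. To repair the proof, replace the ``optimality'' claims by the cleaner assertion that \emph{every} matrix contained in $f$ has spectral radius of the form $(a+\sqrt{a^2+4bc})/2$ with $a+b\le d$, $c\le d$ (possibly $0$); since the permutation matrix is always contained, $\theta\ge 1$, and the maximum is then one of these non-zero values.
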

\begin{proof}
Each real number $\theta=\frac{a+\sqrt{a^2+4bc}}{2}\not=0$, where $(a,b,c) \in \NN^3$ is a root of the polynomial $P(x)=x^2-ax-bc$, with $a,b,c\in \NN^2\setminus \{0\}$ so it is a Handelman number. If $P$ is irreducible, then $\theta$ is a quadratic integer, and otherwise it is a positive integer. 
It remains to see that  the maximal eigenvalue of every $f$ is of the desired form.

We write $f=\sigma\circ \tau$, where $\sigma\in \Sym(\A^3)$ and $\tau\in \TAut(\A^3)$ is a triangular automorphism, that we write as $\tau=(\nu_1x_1+\epsilon,\nu_2 x_2+p(x_1),\nu_3 x_3+q(x_1,x_2))$ where  $\nu_1,\nu_2,\nu_3\in \k^*$, $\epsilon\in \k$, $p\in \k[x_1]$ and $q\in \k[x_1,x_2]$. The matrices contained in $\tau$ are all of the form
\[\begin{pmatrix}
m & 0 &0\\
0 & 1 & 0\\
0 & 0& 1
\end{pmatrix},
\begin{pmatrix}
m & 0 &0\\
k & 0 & 0\\
0 & 0& 1
\end{pmatrix},\begin{pmatrix}
m & 0 &0\\
0 & 1 & 0\\
i & j& 0
\end{pmatrix},
\begin{pmatrix}
m & 0 &0\\
k & 0 & 0\\
i & j& 0
\end{pmatrix}\]
where $m, k, i, j$ are non-negative integers and $0 \leq m \leq 1$, 
$k\le \deg(p)\le d$ and $i+j\le \deg (q)\le d$. 
Since the spectral radius is order-preserving
on real square matrices with non-negative coefficients (see Definition~\ref{Defi:NormRn}\ref{Defi:NormRn:Spectralradius}) and since $\nu_1 \neq 0$, 
the maximal eigenvalue is the spectral radius of a matrix where $m=1$.
The matrices contained in $f$ are obtained from one of the above four types by permuting the rows. Permuting the rows of the identity matrix only gives a spectral radius equal to $1$. In the second case, we conjugate by the permutation of the last two. In any case, we obtain that $\theta$ is either equal to $1$ or is the spectral radius of a matrix $\sigma' M$, where $\sigma'$ is a permutation matrix and M is of the form
\[
\begin{pmatrix}
1 & 0 &0\\
0 & 1& 0\\
k & 0 & 0
\end{pmatrix},\begin{pmatrix}
1 & 0 &0\\
0 & 1 & 0\\
i & j& 0
\end{pmatrix},
\begin{pmatrix}
1 & 0 &0\\
k & 0 & 0\\
i & j& 0
\end{pmatrix}\]
where $k\le d$ and $i+j\le d$. We obtain
\[\sigma'M=\begin{pmatrix}
m_{11} & m_{12} &0\\
m_{21} & m_{22} & 0\\
m_{31} & m_{32}& 0
\end{pmatrix}\]
for some  $m_{ij}\in \NN$, so $\theta$ is the spectral radius of the matrix 
\[
	\begin{pmatrix}
		m_{11} & m_{12}\\
		m_{21} & m_{22}
	\end{pmatrix} \, .
\]
This last matrix is one of the following:
\[
	\begin{pmatrix}
		r & 0\\
		s & 0
	\end{pmatrix}, 
	\begin{pmatrix}
		r & 0\\
		0 & s
	\end{pmatrix},  
	\begin{pmatrix}
		r & 0\\
		i & j
	\end{pmatrix}, 
	\begin{pmatrix}
		i & j\\
		0 & 1
		\end{pmatrix}
	\begin{pmatrix}
		i & j\\
		r & 0
	\end{pmatrix},
	\begin{pmatrix}
	0 & 1 \\
	i & j
	\end{pmatrix},
	\text{ or }
	\begin{pmatrix}
		0 & r\\
		s & 0
	\end{pmatrix},
\]
where $r,s\in \{1,i,j,k\}$. In the first four cases, $\theta$ is an integer in $\{1,\ldots,d\}$, so has the desired form, with $a=\theta$, and $b=c=0$. In the fifth case,  the characteristic polynomial is  $x^2-ix-jr$. 
Choosing $a=i$, $b=j$ and $c=r$ we get $\theta=(a+\sqrt{a^2+4bc})/2$. 
In the sixth case, the characteristic polynomial is
$x^2 - jx - i$. When we choose $a = j$, $b = i$ and $c = 1$, we get again 
$\theta = (a+\sqrt{a^2+4bc})/2$.
In the last case, the characteristic polynomial is $x^2-rs$. We then choose $a=0$, $b=r$ and $c=s$.
\end{proof}

We can now give the proof of Proposition~\ref{Prop:AffineTriangularA3}.
\begin{proof}[Proof of Proposition~\ref{Prop:AffineTriangularA3}]
We take an affine-triangular automorphism  $f\in \Aut(\A^3)$. By Proposition~\ref{Affine-Triangular-Permutation}, there exists $\alpha\in \Aff(\A^3)$ such that $f'=\alpha f \alpha^{-1}$ is a permuta\-tion-triangular automorphism. We then have $\deg(f')=\deg(f)$. Moreover, Proposition~\ref{Prop:MonomialEig} shows that there exists a maximal eigenvector of $f$. We denote by $\theta$ the maximal eigenvalue of $f'$. If $\theta=1$ or if $f'$ is $\mu$-algebraically stable for each maximal eigenvector $\mu$, the dynamical degrees $\lambda(f)$ and $\lambda(f')$ are equal to the maximal eigenvalue $\theta$ of $f'$ (Proposition~\ref{Prop:MonomialEig}), which is a Handelman number (Lemma~\ref{Lemm:SetOfEigenvaluesAffT3}) so the result holds.

Suppose now that $\theta>1$ and that $f'$ is not $\mu$-algebraically stable for some maximal eigenvector $\mu$. Lemma~\ref{Lem:PermTriangularNotAlgStab} gives two possibilities for $f'$:
\begin{align*}
	f' &=(\xi_3 x_3+p_3(x_1,x_2),p_1(x_1),\xi_2 x_2+p_2(x_1)) \quad \textrm{or} \\
	f' &=(\xi_2 x_2+p_2(x_1),\xi_3 x_3+p_3(x_1,x_2),p_1(x_1))
\end{align*}
where $p_1, p_2 \in \k[x_1]$, $p_3 \in \k[x_1, x_2]$, 
$\xi_2, \xi_3 \in \k^\ast$, $\deg(p_1)=1$ and $\deg(p_2)>1$. In both cases, Lemma~\ref{Lem:PermTriangularNotAlgStab} shows that one can replace $f'$  by a conjugate, decrease the degree of $p_2$ and do not increase the degree of $f'$. After finitely many steps, we obtain the desired case where $\theta=1$ or $f'$ is $\mu$-algebraically stable for each maximal eigenvector $\mu$. Moreover, we still have $\deg(f')\le \deg(f)$.
\end{proof}

\begin{proof}[Proof of Theorem~$\ref{Thm:AffTriang3}$]
Let $f\in \Aut(\A^3)$ is an affine-triangular automorphism of $\A^3$ of degree $d$. Proposition~\ref{Prop:AffineTriangularA3} gives the existence of a permutation-triangular automorphism $f'\in \Aut(\A^3)$ such that $\deg(f')\le \deg(f)$ and such that either the maximal eigenvalue $\theta$ of $f'$ is equal to $1$, or $\theta>1$ and $f'$ is $\mu$-algebraically stable for each maximal eigenvector 
$\mu$. In the first case, the dynamical degree $\lambda(f)$ is equal to $\lambda(f')=1$, by Proposition~\ref{Prop:MonomialEig}\ref{Prop:MonomialEig2}. 
In the second case, we obtain $\lambda(f)=\lambda(f')=\theta$, by Proposition~\ref{Prop:MonomialEig}\ref{Prop:MonomialEig3}. 
Moreover, Lemma~\ref{Lemm:SetOfEigenvaluesAffT3} proves that $\theta=\frac{a+\sqrt{a^2+4bc}}{2}$ for some $a,b,c\in \NN$ with $a+b\le d, c\le d$ (and that $\theta\not=0$).

Conversely, for all $a,b,c\in \NN$ such that $\theta=\frac{a+\sqrt{a^2+4bc}}{2}\not=0$, the element $\theta$ is the dynamical degree of $(x_1^ax_2^b+x_3,x_2+x_1^c,x_1)$ and $(x_3+x_1^ax_2^{bc},x_1,x_2)$ (Lemma~\ref{Lemm:TwoBasicExamplesThatwork}), and thus of a permutation-triangular automorphism of $\A^3$. This achieves the proof.
\end{proof}

\begin{corollary}
	\label{Cor:ToThm1}
	For each $d \ge 3$ the set of all dynamical degrees of shift-like automorphisms of $\A^3$ of degree $d$
	is strictly contained in the set of all dynamical degrees of affine-triangular 
	automorphisms of degree $d$.
\end{corollary}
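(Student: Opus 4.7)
The plan is to establish both the inclusion $S_d \subseteq A_d$ (where I write $S_d$ and $A_d$ for the sets of dynamical degrees appearing in the statement) and the strict inequality $S_d \neq A_d$. For the inclusion I would simply remark that any shift-like automorphism $(x_3 + p(x_1, x_2), x_1, x_2)$ of $\AA^3$ of degree $d$ factors as $\sigma \circ \tau$ with the permutation $\sigma = (x_3, x_2, x_1) \in \Sym(\AA^3)$ (in particular an affine automorphism) and the triangular $\tau = (x_1, x_2, x_3 + p) \in \TAut(\AA^3)$; since the degree is preserved, the shift-like is permutation-triangular (a fortiori affine-triangular) of the same degree $d$.

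For the strict inclusion I would exhibit a witness in $A_d \setminus S_d$ by trying two candidates. By Lemma~\ref{Lemm:TwoBasicExamplesThatwork} applied with $(a,b,c) = (1,1,d)$ and with $(a,b,c) = (2,1,d)$, the two affine-triangular automorphisms
\[
	f = (x_3 + x_1 x_2,\, x_2 + x_1^d,\, x_1) \quad \text{and} \quad g = (x_3 + x_1^2 x_2,\, x_2 + x_1^d,\, x_1)
\]
have dynamical degrees $\lambda_1 = (1 + \sqrt{1+4d})/2$ and $\lambda_2 = 1 + \sqrt{1+d}$, respectively. For $d \ge 3$, both $f$ and $g$ have degree exactly $d$, so $\lambda_1, \lambda_2 \in A_d$. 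I would then verify that at least one of $\lambda_1, \lambda_2$ is an irrational quadratic integer: if both were integers, then $1+4d = m^2$ and $1+d = n^2$ for positive integers $m,n$, yielding $3 = 4n^2 - m^2 = (2n-m)(2n+m)$, which is incompatible with $d \ge 3$ since then $2n + m \ge 8$. Let $\lambda_0$ denote an irrational one among $\lambda_1, \lambda_2$.

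Finally I would show $\lambda_0 \notin S_d$. Applying Proposition~\ref{Prop:Jonsson} with $n = 2$, any shift-like $e_p$ of $\AA^3$ with $\deg p = d$ has dynamical degree equal to the maximum, taken over the finite set $I$ of monomial exponents of $p$, of the largest positive root of $X^2 - i_1 X - i_2$. This maximum is therefore attained by some $(i_1, i_2) \in I$, so $\lambda(e_p) = (i_1 + \sqrt{i_1^2+4i_2})/2$ with $i_1 + i_2 \le d$. If $\lambda_0$ equals $\lambda(e_p)$, then the minimal polynomial $X^2 - i_1 X - i_2$ of the quadratic irrational $\lambda_0$ pins down $(i_1, i_2)$ uniquely and forces it to be $(1, d)$ or $(2, d)$ depending on whether $\lambda_0 = \lambda_1$ or $\lambda_0 = \lambda_2$; in either case $i_1 + i_2 > d$, a contradiction. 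The main subtlety, and the reason for keeping two candidates rather than one, is that the first candidate $\lambda_1$ (the one highlighted in the introductory discussion) fails to be irrational exactly when $d$ is of the form $k(k+1)$, i.e.~for $d = 6, 12, 20, \dots$; on those values the second candidate $\lambda_2$ carries the argument.
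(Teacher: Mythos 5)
Your proof is correct and, in fact, more careful than the paper's own. The paper's argument rests on the claim that the set of dynamical degrees of degree-$d$ shift-like automorphisms of $\AA^3$ is \emph{exactly} $\left\{(a + \sqrt{a^2 + 4d-4a})/2 \mid 0 \leq a \leq d\right\}$, and then checks that $\lambda_d = (1+\sqrt{1+4d})/2$ does not lie in that list. That equality is false: the monomial exponent realizing the maximum in Proposition~\ref{Prop:Jonsson} need not have total degree $d$. For instance the degree-$6$ shift-like $(x_3 + x_1^3 + x_2^6, x_1, x_2)$ has dynamical degree $3 = \lambda_6$, so the paper's argument actually breaks at precisely the values $d = k(k+1)$ that you flag as problematic. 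Your route avoids the error: you invoke only the \emph{valid} consequence of Proposition~\ref{Prop:Jonsson}, namely that the dynamical degree of a degree-$d$ shift-like equals $(i_1 + \sqrt{i_1^2 + 4 i_2})/2$ for some monomial exponent $(i_1, i_2)$ of $p$, hence with $i_1 + i_2 \leq d$, and then you pin $(i_1, i_2)$ down uniquely as the coefficients of the minimal polynomial of an \emph{irrational} witness. Producing the backup candidate $\lambda_2 = 1 + \sqrt{1+d}$ and observing that $\lambda_1, \lambda_2$ are never simultaneously rational for $d \geq 3$ supplies exactly what the paper's proof is missing, so in effect you have both reproved the corollary and repaired a gap in the paper's argument.
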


\begin{proof}
	As each shift-like automorphism is also an affine-triangular automorphism, we have an inclusion, that we need to prove to be strict.
	From~ Proposition~\ref{Prop:Jonsson} it follows that the set of
	dynamical degrees of all shift-like automorphisms of $\A^3$ of degree $d$ is equal
	to 
	\[
		\Bigset{({a + \sqrt{a^2 + 4d-4a}})/{2}}{0 \leq a \leq d} \, .
	\]
	From Theorem~\ref{Thm:AffTriang3} it follows that $\lambda_d=(1 + \sqrt{1 + 4d})/2$ 
	is the dynamical degree of the affine-triangular automorphism $(x_3 + x_1 x_2,x_2+x_1^d,x_1)$.
	In order to show that $\lambda_d$ is not the dynamical degree of any shift-like automorphism
	of $\AA^3$ of degree $d$, for each $d \geq 3$,
	we only have to show that there exists no $d \geq 3$ and no $a \in \{0, \ldots, d \}$ such that
	\[
		\sqrt{1 + 4d} = (a-1) + \sqrt{a^2 + 4d-4a} \, .
	\]
	Indeed, if this would be the case, then $1 + 4d = (a-1)^2 + 2(a-1)\sqrt{a^2 + 4d-4a} + a^2 + 4d-4a$,
	which yields
	\[
		a(3-a) = (a-1)\sqrt{a^2 + 4d-4a} \, .
	\]
	This implies that $a \leq 3$ and $a \not\in \{0, 1\}$, i.e.~$a = 2$. However, in this case $d = 2$.
	
\end{proof}

\subsection{Automorphisms of affine spaces associated to weak-Perron numbers}\label{SubSec:WeakPerron}
In this section, we construct some affine-triangular automorphisms associated to weak-Perron numbers and prove Theorem~\ref{Theorem:PerronAuto}.

\begin{lemma}\label{Lemm:Monomialauto}
Let $n\ge 1$ and let $A=(a_{i,j})_{i,j=1}^n\in \Mat_n(\NN)$ be an irreducible matrix with 
spectral radius $\rho(A)>1$. The automorphism  $f \in \Aut(\A^{2n})$ given by 
\[
	\label{Lemm:Monomialauto_1}
	\tag{$\ast$}
	\left(x_{n+1}+ \prod_{i=1}^n x_i^{a_{1,i}},
		x_{n+2}+ \prod_{i=1}^n x_i^{a_{2,i}},\ldots,x_{2n}+ \prod_{i=1}^n x_i^{a_{n,i}},
		x_1,\ldots,x_n\right)
\]
has dynamical degree $\lambda(f)=\rho(A)$.
\end{lemma}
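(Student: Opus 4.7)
The plan is to apply Proposition~\ref{Prop:Dynamicaldegreeestimate} with a strictly positive weight vector $\mu \in (\RR_{>0})^{2n}$ built from a Perron--Frobenius eigenvector of $A$. This is the natural strategy since $f$ is a permutation-triangular automorphism (writing $\sigma = (x_{n+1},\ldots,x_{2n},x_1,\ldots,x_n)$ and $\tau_i = x_i$ for $i\le n$, $\tau_{n+i} = x_{n+i}+\prod_j x_j^{a_{i,j}}$ for $i\le n$ gives $f = \sigma \circ \tau$), so the general machinery of Section~\ref{Sec:IneqAndProofOfProp:Dynamicaldegreeestimat} applies cleanly.

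Set $\theta := \rho(A) > 1$. Since $A$ is irreducible, the Perron--Frobenius theorem (Theorem~\ref{Thm:PerronFrobenius}) produces an eigenvector $v = (v_1,\ldots,v_n) \in (\RR_{>0})^n$ with $Av = \theta v$, i.e.~$\sum_{j=1}^n a_{i,j} v_j = \theta v_i$ for every $i$. I would then set
\[
\mu = \Bigl(v_1,\ldots,v_n,\,\tfrac{v_1}{\theta},\ldots,\tfrac{v_n}{\theta}\Bigr) \in (\RR_{>0})^{2n}
\]
and verify that $\deg_\mu(f_i) = \theta\mu_i$ for every $i$: for $i \le n$, since $\theta>1$ we have $\deg_\mu(x_{n+i}) = v_i/\theta < \theta v_i = \sum_j a_{i,j} v_j = \deg_\mu\!\bigl(\prod_j x_j^{a_{i,j}}\bigr)$, so the second monomial dominates and $\deg_\mu(f_i) = \theta v_i = \theta\mu_i$; for $i > n$, $\deg_\mu(f_i) = \deg_\mu(x_{i-n}) = v_{i-n} = \theta\mu_i$ by the definition of $\mu_i$. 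In particular $\deg_\mu(f) = \theta < \infty$, which gives part~\ref{thetafinite} of Proposition~\ref{Prop:Dynamicaldegreeestimate}.

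Next I would identify the $\mu$-leading part $g$ of $f$: the strict inequality $v_i/\theta < \theta v_i$ shows that the $x_{n+i}$-summand is dropped for $i\le n$, so
\[
g = \Bigl(\prod_{j=1}^n x_j^{a_{1,j}},\ldots,\prod_{j=1}^n x_j^{a_{n,j}},\,x_1,\ldots,x_n\Bigr).
\]
This is a monomial endomorphism (with all coefficients equal to $1$) whose exponent matrix is $\bigl(\begin{smallmatrix} A & 0 \\ I_n & 0 \end{smallmatrix}\bigr)$. Consequently $g^r$ is again a monomial endomorphism with all unit coefficients, so each of its $2n$ components is a non-zero monomial; in particular $g^r \neq 0$ for every $r \ge 1$. (This step is where one has to be a touch careful: although the exponent matrix is singular and $g$ is not dominant, what matters for $\mu$-algebraic stability is merely the non-vanishing of the components, which is automatic for monomial maps with unit coefficients.)

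Finally, since $\mu \in (\RR_{>0})^{2n}$ and $\theta = \deg_\mu(f) > 1$, Proposition~\ref{Prop:Dynamicaldegreeestimate}\ref{lambdaequal} gives $\lambda(f) = \theta = \rho(A)$. There is no serious obstacle; the only non-trivial input is recognizing that the $\mu$-leading part is a monomial map so that $\mu$-algebraic stability is immediate.
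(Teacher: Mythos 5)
Your proof is correct and follows essentially the same route as the paper: choose a positive Perron--Frobenius eigenvector of $A$, build from it a strictly positive weight $\mu$ (yours is the paper's choice scaled by $1/\theta$, which changes nothing), verify $\deg_\mu(f_i)=\theta\mu_i$ componentwise, observe that the $\mu$-leading part is a unit-coefficient monomial map so $g^r\neq 0$ for all $r$, and conclude via Proposition~\ref{Prop:Dynamicaldegreeestimate}\ref{lambdaequal}. The parenthetical remark about $g$ not being dominant is a fair observation but not a real worry, exactly as you say.
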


\begin{proof}
Let us write $\theta=\rho(A)$ and choose an eigenvector $v=(v_1,\ldots,v_n)\in (\r_{>0})^n$ of $A$ to the eigenvalue $\theta$ (which exists by Theorem~\ref{Thm:PerronFrobenius}). We then choose $\mu=(\theta v_1,\ldots,\theta v_n,v_1,\ldots,v_n)\in (\r_{>0})^{2n}$. The matrix 
\[
	M=
	\begin{pmatrix} 
		A & 0 \\ 
		I_n & 0 
	\end{pmatrix}\in \Mat_{2n}(\NN)
\]
is contained in $f$, its spectral radius is $\theta$ and $\mu$ is an eigenvector of $M$ to the eigenvalue $\theta$. Writing $f=(f_1, \ldots, f_{2n})$, we now prove that $\deg_\mu(f_j)= \theta \mu_j$ for each $j\in \{1,\ldots,2n\}$, and compute the $\mu$-homogeneous part $g_j$ of $f_j$ of degree $\theta\mu_j$:
\begin{enumerate}[leftmargin=*]
\item
 For each $j\in \{1,\ldots,n\}$, we have $\deg_\mu(x_{n+j})=v_j$ and $\deg_\mu(\prod_{i=1}^n x_i^{a_{j,i}})=\sum_{i=1}^n \theta a_{j,i}v_i =\theta^2 v_j$, so $\deg_\mu(f_j)=\theta^2 v_j=\theta \mu_j$ and $g_j=\prod_{i=1}^n x_i^{a_{j,i}}$.
 \item
 For each $j\in \{n+1,\ldots,2n\}$ we have $\deg_\mu(f_j)=\deg_\mu(x_{j-n})=\theta v_{j-n}=\theta \mu_j$ and $g_j=f_j$.
\end{enumerate}
This implies that $\deg_\mu(f)=\theta$. As the endomorphism $g=(g_1,\ldots,g_{2n})\in \End(\A^{2n})$ is monomial, it satisfies $g^r\not=0$ for each $r\ge 1$ (and moreover each component of $g^r$ is not zero). This implies that $f$  is $\mu$-algebraically stable and that $\lambda(f)=\theta$ (see Proposition~\ref{Prop:Dynamicaldegreeestimate}).
\end{proof}

\begin{proposition}\label{Prop:PerronQuad}
Let $\lambda\in \r$ be a weak Perron number that is a quadratic integer, and let $x^2-ax-b$ be its minimal polynomial, with $a,b\in \Z$. We then have $a\ge 0$ and the following hold:
\begin{enumerate}[leftmargin=*]
\item\label{Perronquad1}
If $b\ge 0$, then $\lambda$ is the dynamical degree of the shift-like automorphism  \[(x_3+x_1^ax_2^b,x_1,x_2)\in \Aut(\A^3)\,.\]
\item\label{Perronquad2}
If $b<0$, then $\lambda$ is not the dynamical degree of an affine-triangular automorphism of $\A^3$, but is the dynamical degree of a permutation-triangular 
automorphism of $\A^4$ of the form~\eqref{Lemm:Monomialauto_1} in Lemma~$\ref{Lemm:Monomialauto}$. 
\end{enumerate}
\end{proposition}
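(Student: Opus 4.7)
My plan is to deduce part~\ref{Perronquad1} directly from the classification in Theorem~\ref{Thm:AffTriang3}, and to handle part~\ref{Perronquad2} by combining that same classification (for the non-realizability in $\A^3$) with the explicit construction of Lemma~\ref{Lemm:Monomialauto} (for the realization in $\A^4$). The inequality $a \geq 0$ is immediate: writing $\mu = (a-\sqrt{a^2+4b})/2$ for the Galois conjugate of $\lambda$, the weak-Perron bound $|\mu| \leq \lambda$ yields $a = \lambda + \mu \geq \lambda - |\mu| \geq 0$. Moreover, since $x^2-ax-b$ is irreducible, $\lambda$ is irrational (so $\lambda > 1$, as $\lambda \geq 1$) and $b \neq 0$.

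For part~\ref{Perronquad1} I would apply Proposition~\ref{Prop:Jonsson} (or equivalently Theorem~\ref{Thm:AffTriang3} with the triple $(a,b,1)$) to the shift-like automorphism $(x_3+x_1^ax_2^b,x_1,x_2) \in \Aut(\A^3)$: its dynamical degree is the unique positive real root of $x^2-ax-b$, which is $\lambda$. For the non-realizability claim in part~\ref{Perronquad2}, Theorem~\ref{Thm:AffTriang3} says every affine-triangular dynamical degree in $\A^3$ is a root of some $x^2 - a'x - b'c'$ with $(a',b',c') \in \NN^3$; comparing this with the irreducible minimal polynomial $x^2 - ax - b$ of $\lambda$ forces $b = b'c' \geq 0$, contradicting $b < 0$.

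To realize $\lambda$ in $\A^4$, the strategy is to produce an irreducible matrix $A \in \Mat_2(\NN)$ with characteristic polynomial $x^2 - ax - b$; Lemma~\ref{Lemm:Monomialauto} will then immediately yield a permutation-triangular automorphism of $\A^4$ of the desired form $(\ast)$ with $\lambda(f) = \rho(A) = \lambda$. Explicitly, I would set $a_{11} = \lfloor a/2 \rfloor$, $a_{22} = \lceil a/2 \rceil$, $a_{12} = 1$ and $a_{21} = a_{11} a_{22} + b$, so that $A$ automatically has trace $a$ and determinant $-b$; irreducibility of a $2 \times 2$ non-negative matrix is equivalent to positivity of both off-diagonal entries, so it only remains to check that $a_{21} \geq 1$.

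This last inequality, $\lfloor a^2/4 \rfloor + b \geq 1$, is the main (and essentially only) obstacle. The key input is that $a^2 + 4b$ is a positive non-square integer: positive because $\lambda$ is real, non-square because $\lambda$ is irrational. A short congruence modulo $4$ rules out the values $2$ and $3$, so $a^2 + 4b \geq 5$ when $a$ is odd (yielding $\lfloor a^2/4 \rfloor = (a^2-1)/4 \geq 1 + |b|$) and $a^2 + 4b \geq 8$ when $a$ is even (yielding $\lfloor a^2/4 \rfloor = a^2/4 \geq 2 + |b|$). With the required inequality established, Lemma~\ref{Lemm:Monomialauto} applies and produces the desired automorphism, completing the proof.
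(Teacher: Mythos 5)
Your proof is correct and follows essentially the same route as the paper: the same $2\times 2$ matrix with $a_{11}=\lfloor a/2\rfloor$, $a_{22}=\lceil a/2\rceil$, $a_{12}=1$, $a_{21}=\lfloor a^2/4\rfloor+b$, fed into Lemma~\ref{Lemm:Monomialauto}. The only differences are cosmetic: for the non-realizability in $\A^3$ you compare minimal polynomials with the list in Theorem~\ref{Thm:AffTriang3}, while the paper argues via Handelman numbers (Lemma~\ref{Lemm:Handelman}, Proposition~\ref{Prop:AffineTriangularA3}); and for irreducibility of $A$ you establish $a_{21}\geq 1$ by a mod-4 and non-square argument, whereas the paper only shows $a_{21}\geq 0$ and deduces irreducibility of $A$ indirectly from the irreducibility of $x^2-ax-b$ (since $a_{21}=0$ would make $\chi_A$ factor). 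Both arguments are valid; yours is a bit more explicit, the paper's a bit more economical.
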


\begin{proof}
Let us write $x^2-ax-b=(x-\lambda)(x-\mu)$ for some $\mu\in \r$. Note that $\mu\not=\lambda$, as otherwise $\lambda^2\in \Z$ and $2\lambda\in \Z$ would imply that $\lambda\in \Z$, impossible as $\lambda$ is a quadratic integer. Since $\lambda$ is a weak-Perron number, we have $\lambda \geq 1$ and $-\lambda\le \mu< \lambda$. In particular, $a=\lambda+\mu \ge 0$. 
As $x^2 -ax -b$ is irreducible and has 
a real root by assumption, the discriminant is $a^2+4b \geq 1$.

If $b\ge 0$, Assertion~\ref{Perronquad1} follows from Lemma~\ref{Lemm:TwoBasicExamplesThatwork} (and also from Proposition~\ref{Prop:PermutationElementary}).

Suppose now that $b< 0$. As $\lambda\mu=-b$, this implies that $\mu>0$, so $\lambda$ is not a Handelman number (Lemma~\ref{Lemm:Handelman}) and thus is not the dynamical degree of an affine-triangular automorphism of $\A^3$ (Proposition~\ref{Prop:AffineTriangularA3}). 
 It is now enough to show that
\[
	f=(x_3+x_1^{\alpha}x_2,x_4+x_1^{\alpha(a-\alpha)+b}x_2^{a-\alpha},x_1,x_2)\in \Aut(\A^4)
\]
is a permutation-triangular
automorphism with dynamical degree $\lambda(f)=\lambda$.

Firstly, we prove that $f$ is a permutation-triangular 
automorphism of $\A^4$ by showing 
that the exponents are non-negative. As $a\ge 0$, the numbers 
$\alpha= \lfloor a/2 \rfloor$ and $a-\alpha$ are non-negative integers, 
so we only need to see that $\alpha(a-\alpha)+b\ge 0$. 
Since $a^2+4b \geq 1$ we get in case $a$ is even, that $\alpha(a-\alpha)+b=\alpha^2+b=(a^2+4b)/4>0$ and 
in case $a$ is odd, that $\alpha=(a-1)/2$, so 
$\alpha(a-\alpha)+b=((a-1)/2) \cdot ((a+1)/2)+b=(a^2+4b-1)/4 \ge 0$.

Secondly, the matrix 
\[
	A=\begin{pmatrix} 
			\alpha & 1 \\ 
			\alpha(a-\alpha)+b & a-\alpha
		\end{pmatrix}
		\in \Mat_2(\NN)
\]
has characteristic polynomial $x^2-ax-b$ and thus spectral radius $\rho(A)=\lambda$.
As $x^2-ax-b$ is irreducible by assumption, 
it follows that $A$ is an irreducible matrix.
Moreover, as $b \leq -1$ and as $x^2 -ax -b$ has a real root, we get $a \neq 0$, hence $a \geq 1$.
Since $a^2 + 4b \geq 1$, we get $\lambda = (a + \sqrt{a^2 + 4b})/2 \geq 1$. Now, if $\lambda = 1$, then 
$1 \le a \le 2$ and thus $a^2 + 4b \leq 0$ (as $b \leq -1$), contradiction. 
Thus $\lambda > 1$ and we can apply Lemma~\ref{Lemm:Monomialauto} 
and get that the dynamical degree of $f$ is $\lambda(f)=\rho(A)=\lambda$.
\end{proof}

\begin{proof}[Proof of Theorem~$\ref{Theorem:PerronAuto}$]
Let  $\lambda \ge 1$ be a weak-Perron number. By Theorem~\ref{LindTheorem}, 
$\lambda$ is the spectral radius of an irreducible square matrix with non-negative integral coefficients.  Lemma~\ref{Lemm:Monomialauto} then shows that $\lambda$ is the dynamical degree of an affine-triangular automorphism of $\A^{n}$ for some integer $n$. 
We denote by $n_0$ the least possible such $n$.

If $\lambda=1$, then $n_0=1$, by taking the identity.

If $\lambda>1$ is an integer, then $n_0\ge 2$, as every automorphism of $\A^1$ is affine and thus has dynamical degree $1$. Moreover, $n_0=2$ as  $f=(x_1^\lambda+x_2,x_1)$ has dynamical degree equal to $\lambda$ 
($f$ is $\mu$-algebraic stable for $\mu=(1,0)$ and $\deg_{\mu}(f) = \lambda$).

If $\lambda$ is not an integer, then $n_0\ge 3$, as the dynamical degree of every automorphism of $\A^2$ is an integer (Corollary~\ref{Cor:Jung2}). If $\lambda$ is a quadratic integer, the minimal polynomial of $\lambda$ is equal to $x^2-ax-b$ with $a\ge 0$ and $b\in \Z$ (Proposition~\ref{Prop:PerronQuad}). If the conjugate of  $\lambda$ is negative, we have $b>0$, so $n_0=3$ by Proposition~\ref{Prop:PerronQuad}\ref{Perronquad1}. If the conjugate of  $\lambda$ is positive, we have $b<0$, so $n_0=4$ by Proposition~\ref{Prop:PerronQuad}\ref{Perronquad2}.
\end{proof}

To complement Theorem~$\ref{Theorem:PerronAuto}$, we now give a family of examples of quadratic integers that do not arise as dynamical degrees of affine-triangular automorphisms of $\Aut(\A^3)$ but which arise as dynamical degrees of some other automorphisms of $\A^3$.
\begin{lemma}\label{Lemm:Examplerst}
For all integers $r, s, t\ge 1$, the dynamical degree of the automorphism
\[
	f=(y+x^rz^t,z,x+z^s(y+x^rz^t))\in \Aut(\A^3)
\]
is the biggest root of $x^2-ax+b\in \RR[x]$, with $a=r+s+t$, 
$b=rs$ and satisfies $\lambda(f)>s+1$. In particular, if $\lambda(f)$ is not an integer, it is not the dynamical degree of an affine-triangular automorphism of $\A^3$, so $f$ is not conjugate to an affine-triangular automorphism of $\A^3$.
\end{lemma}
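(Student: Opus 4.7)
The plan is to compute $\lambda(f)$ by applying Proposition~\ref{Prop:Dynamicaldegreeestimate} with a weight $\mu\in(\r_{>0})^3$ suggested by Perron-Frobenius. In the coordinates $(x_1,x_2,x_3)=(x,y,z)$,
\[
f=(x_2+x_1^rx_3^t,\ x_3,\ x_1+x_2x_3^s+x_1^rx_3^{s+t}),
\]
and a direct check shows that $(x_3-x_1x_2^s,\ x_1-(x_3-x_1x_2^s)^rx_2^t,\ x_2)$ is a polynomial inverse, so $f\in\Aut(\A^3)$. Let $\theta$ denote the larger root of $P(\lambda)=\lambda^2-a\lambda+b$ with $a=r+s+t$ and $b=rs$. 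The discriminant $a^2-4b=(r-s)^2+t(2r+2s+t)$ is positive since $t\ge 1$, so $\theta$ is real, and the evaluation $P(s+1)=s(1-t)+(1-r-t)\le -1$ (using $r,s,t\ge 1$) yields $\theta>s+1>1$.

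Set $\mu:=(\theta-s,\ 1,\ \theta)\in(\r_{>0})^3$; this is a rescaling of an eigenvector of the matrix $\left(\begin{smallmatrix}r&0&t\\ 0&0&1\\ r&0&s+t\end{smallmatrix}\right)$ contained in $f$. Using the defining relation $\theta^2=a\theta-b$ one checks that
\[
\deg_\mu(x_1^rx_3^t)=r(\theta-s)+t\theta=\theta^2-s\theta=\theta\mu_1,\qquad \deg_\mu(x_3)=\theta=\theta\mu_2,
\]
\[
\deg_\mu(x_1^rx_3^{s+t})=r(\theta-s)+(s+t)\theta=a\theta-b=\theta^2=\theta\mu_3.
\]
The remaining monomials $x_2$ in $f_1$, and $x_1$ and $x_2x_3^s$ in $f_3$, are strictly dominated since $\theta(\theta-s)>1$, which follows from $\theta>s+1$. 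Hence $\deg_\mu(f)=\theta>1$, and the $\mu$-leading part of $f$ is the monomial endomorphism
\[
g=(x_1^rx_3^t,\ x_3,\ x_1^rx_3^{s+t}).
\]

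Proposition~\ref{Prop:Dynamicaldegreeestimate}\ref{lambdasmaller} gives $\lambda(f)\le\theta$, and part~\ref{lambdaequal} reduces the reverse inequality to checking $g^r\ne 0$ for every $r\ge 1$. Since each coordinate of $g$ is a non-zero monomial in $x_1,x_3$ alone, an immediate induction shows that each coordinate of any iterate $g^r$ is a non-zero monomial, so $g^r\ne 0$ and $\lambda(f)=\theta$. For the final statement, the two real roots of $P$ multiply to $b=rs>0$ and sum to $a>0$, so both are positive; thus the Galois conjugate of $\theta$ is real and positive. By Lemma~\ref{Lemm:Handelman} a Handelman number admits no other positive real Galois conjugate, so $\theta$ fails to be a Handelman number whenever it is irrational. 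Since Proposition~\ref{Prop:AffineTriangularA3} (combined with Lemma~\ref{Lemm:SetOfEigenvaluesAffT3}) identifies the dynamical degree of every affine-triangular automorphism of $\A^3$ as a Handelman number, $f$ cannot be conjugate to one whenever $\lambda(f)$ is non-integer. The one delicate step is the choice of the weight $\mu$; once guessed from the Perron-Frobenius eigenvector, all the required inequalities reduce to the single estimate $\theta>s+1$, and the $\mu$-algebraic stability is transparent because the $\mu$-leading part $g$ is monomial.
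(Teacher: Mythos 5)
Your proof is correct and follows essentially the same route as the paper: the same weight $\mu=(\theta-s,1,\theta)$, the same $\mu$-leading monomial map $g=(x_1^rx_3^t,x_3,x_1^rx_3^{s+t})$, and the same appeal to Proposition~\ref{Prop:Dynamicaldegreeestimate} together with the positivity of the second root of $x^2-ax+b$ to rule out an affine-triangular conjugate. The extra details you supply (the explicit inverse, the positivity of the discriminant, the domination inequalities reduced to $\theta(\theta-s)>1$, and citing Proposition~\ref{Prop:AffineTriangularA3} with Lemma~\ref{Lemm:SetOfEigenvaluesAffT3} rather than the theorem they feed into) are all accurate and make the argument more self-contained without changing its substance.
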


\begin{proof} 
Let $\theta$ be the biggest root of $P(x)=x^2-ax+b =(x-r)(x-s)-tx\in \RR[x]$
As $P(s+1)=(s+1-r)-t(s+1)=(s+1)(1-t)-r<0$, we find that $\theta>s+1$. 
In particular, $\mu=(\theta-s,1,\theta)\in \RR_{\ge 1}$.

We compute $\deg_\mu(x^rz^{s+t})=r(\theta-s)+(s+t)\theta=(r+s+t)\theta -rs=\theta^2$ and $\deg_\mu(x^rz^t)=\theta^2-s\theta=\theta(\theta-s)$. 
This gives $\deg_\mu(f)=\theta$, with $\mu$-leading part $g=(x^rz^t,z,x^rz^{s+t})$. Hence, $\lambda(f)=\theta$ by Proposition~\ref{Prop:Dynamicaldegreeestimate}. 

If $\theta$ is not an integer, the other root of $P(x)$ is positive, so 
$\theta$ is not the dynamical  degree of an affine-triangular automorphism of $\A^3$ (Theorem~\ref{Theorem:PerronAuto}). This implies that $f$ is not conjugate to an affine-triangular automorphism of $\A^3$.
\end{proof}

\begin{example}
We now apply Lemma~\ref{Lemm:Examplerst} to small values of $r,s,t$, and find
some examples of automorphisms $f=(y+x^rz^t,z,x+z^s(y+x^rz^t))\in \Aut(\A^3)$ whose dynamical degree $\lambda(f)$ is not the one of an affine-triangular automorphism of $\A^3$. We give below all examples of $\lambda(f)\le 5$ given by  Lemma~\ref{Lemm:Examplerst}. Exchanging $r$ and $s$ does not change the value of $\lambda(f)$, so we will assume that $r\le s\le 3$.
\[\begin{array}{|ccc|c|c|cc}
\hline
 r & s & t & f & \lambda(f)\\
\hline
 1 &1&1& (y+xz,z,x+z(y+xz)) & (3+\sqrt{5})/2\\
\hline
 1 &1&2& (y+xz^2,z,x+z(y+xz^2)) & 2+\sqrt{3}\\
\hline
 1 &1&3& (y+xz^3,z,x+z(y+xz^3)) & (5+\sqrt{21})/2\\
\hline
 1 &2&1& (y+xz,z,x+z^2(y+xz)) & 2+\sqrt{2}\\
\hline
 1 &2&2& (y+xz^2,z,x+z^2(y+xz^2)) & (5+\sqrt{17})/2\\
\hline
 1 &3&1& (y+xz,z,x+z^3(y+xz)) & (5+\sqrt{13})/2\\
\hline
 2 &3&1& (y+x^2z,z,x+z^3(y+x^2z)) & 3+\sqrt{3}\\
\hline
 \end{array}\]
\end{example}
\begin{remark}\label{Rem:WeakPerronSmall}
Let $\lambda$ be a weak-Perron number that is a quadratic integer.

By Theorem~\ref{Theorem:PerronAuto},  $\lambda$ is the dynamical degree of an affine-triangular automorphism of $\A^4$ but is the dynamical degree of an affine-triangular automorphism of $\A^3$ if and only if its conjugate $\lambda'$ is negative. If $\lambda'>0$, then one can 
ask if $\lambda$ is the dynamical degree of an automorphism of $\A^3$ (which would then necessarily be not conjugate to an affine-triangular automorphism). 
Writing $x^2-ax+b$ the minimal polynomial of $\lambda$, with $a,b$ positive integers, Lemma~\ref{Lemm:Examplerst} shows that this is indeed true if one can write $b=rs$ with $r,s\ge 1$ and $a>r+s$. 
In particular, this holds if $b\le 4$, as $a^2-4b>0$, so $a>2\sqrt{b}$. 
If $b=5$, then $a\ge 5$ (as $a > 2 \sqrt{b}$), 
and  Lemma~\ref{Lemm:Examplerst} applies as soon as $a\ge 6$. The case where $a=b=5$ corresponds to $\lambda=(5+\sqrt{5})/2$, which is then the ``simplest'' weak-Perron quadratic integer that is not covered 
by Theorem~\ref{Theorem:PerronAuto} or Lemma~\ref{Lemm:Examplerst}.
\end{remark}

According to Remark~\ref{Rem:WeakPerronSmall}, it seems natural to ask if every quadric weak-Perron number  is the dynamical degree of an automorphism of $\A^3$.
A first intriguing case concerns the following question, which was in fact already asked to us by Jean-Philippe Furter and Pierre-Marie Poloni:
\begin{question}
Is $(5+\sqrt{5})/2$ the dynamical degree of an automorphism of $\A^3$?
\end{question}

\par\bigskip
\renewcommand{\MR}[1]{}
\bibliographystyle{amsalpha}
\bibliography{BIB}

\end{document}